\documentclass[12pt]{amsart}
\usepackage[top=30truemm,bottom=30truemm,left=25truemm,right=25truemm]{geometry}
\usepackage{mathrsfs}
\usepackage{amsmath, amsthm, amssymb}
\usepackage{mathtools}
\usepackage{color}
\usepackage{bm}
\usepackage{amsfonts}
\usepackage{dsfont}
\usepackage{amscd}
\usepackage{extarrows}
\usepackage{enumerate} 
\usepackage[all]{xy}
\usepackage[colorlinks]{hyperref}
\usepackage[nameinlink, capitalize, noabbrev]{cleveref}
\usepackage[colorlinks]{hyperref}

\definecolor{darkblue}{RGB}{0, 102, 204}   

\hypersetup{
    pdfencoding=auto,
    colorlinks=true,    
    linkcolor=darkblue, 
    citecolor=darkblue, 
    urlcolor=darkblue,  
    filecolor=darkblue  
}
\newtheorem{theorem}{Theorem}[section]
\newtheorem{definition}{Definition}[section]
\newtheorem{lemma}{Lemma}[section]
\newtheorem{corollary}{Corollary}[section]
\newtheorem{proposition}{Proposition}[section]
\newtheorem{remark}{Remark}[section]

\newtheorem{conjecture}{Conjecture}[section]

\newcommand{\be}{\begin{equation}}
	\newcommand{\ee}{\end{equation}}
\newcommand{\bea}{\begin{eqnarray}}
	\newcommand{\eea}{\end{eqnarray}}
\newcommand{\ben}{\begin{eqnarray*}}
	\newcommand{\een}{\end{eqnarray*}}
\newcommand{\bt}{\begin{split}}
	\newcommand{\et}{\end{split}}
\newcommand{\bet}{\begin{equation}}

	\newcommand{\ddbar}{\partial \bar{\partial}}

	\numberwithin{equation}{section}
\begin{document}

\title[The Chern-Ricci flow on minimal surfaces of general type]
{Convergence of the Chern-Ricci flow on complex minimal surfaces of general type}

\author[H. Sun]{Haoyuan Sun}
\address{Haoyuan Sun: School of Mathematical Sciences\\ Beijing Normal University\\ Beijing 100875\\ P. R. China}
\email{202531130037@mail.bnu.edu.cn}

\begin{abstract}
We prove uniform diameter estimates, volume non-collapsing estimates and Gromov-Hausdorff convergence for the
normalized Chern-Ricci flow on smooth complex minimal surfaces of general type,
starting from an arbitrary Hermitian metric. This removes the local K\"ahler assumption near the null locus used in our previous work and confirms the Tosatti-Weinkove conjecture in complex dimension two.
The main analytic
ingredients are a surface torsion estimate, a uniform total variation bound for
$\Delta |G|$, a Green-weighted $L^2$ estimate for the torsion, and a linear iteration
of real Poisson equations, which together give the required Green function estimates.
\end{abstract}

\subjclass[2020]{Primary 53E30, 32W20; Secondary 53C55, 32U05, 58J35.}

\keywords{Chern-Ricci flow, Hermitian surfaces, diameter estimates, Green functions,
torsion estimates, Gromov-Hausdorff convergence.}

\maketitle
\section{Introduction}

The long-time behavior of the K\"ahler-Ricci flow is a central topic in complex
geometry and in the analytic Minimal Model Program of Song and Tian
\cite{ST07,ST12,ST17}. One of the most difficult analytic problems in this direction is
to establish uniform geometric estimates when the flow degenerates. In particular,
uniform diameter estimates play a crucial role in understanding the metric limit of
canonical metrics and of the K\"ahler-Ricci flow. In the K\"ahler setting, this problem
has been studied extensively; see for example
\cite{SW13,Song14,GSW16,TZ16,CW17,Guo17,Wang18,CW19,STZ19,CW20,GPSS23,LT23,GPSS24a,GPSS24b,GTô25,GGZ25,Vu26,Sun26}
and the references therein.

Recently, Guo, Phong, Song, and Sturm developed a powerful Green function method
for proving diameter bounds for families of K\"ahler metrics with entropy control
\cite{GPSS23,GPSS24a,GPSS24b}. Their work builds on new estimates for Green functions and
complex Monge-Amp\`ere equations in \cite{GPS24,GPT23,GPTW24}, and gives a robust
framework which avoids relying on pointwise Ricci lower bounds. This framework is
one of the main inspirations for the present paper.

The Hermitian analogue of the K\"ahler-Ricci flow is the Chern-Ricci flow introduced
by Tosatti and Weinkove \cite{TW13,TW15}. Given a compact complex manifold $X$ and an
initial Hermitian metric $\omega_0$, the flow is
\begin{equation}\label{main CRF}
    \frac{\partial}{\partial s}\widetilde\omega(s)=-Ric^C(\widetilde\omega(s)),
\qquad
\widetilde\omega(0)=\omega_0,
\end{equation}
where $Ric^C$ is the Chern-Ricci form. If $K_X$ is nef, the flow exists for all time.
After the standard change of variables $t=\log(1+s)$ and
$\omega_t=(1+s)^{-1}\widetilde\omega(s)$, the normalized Chern-Ricci flow becomes
\begin{equation}\label{eq:intro-normalized-crf}
\frac{\partial}{\partial t}\omega_t=-Ric^C(\omega_t)-\omega_t,
\qquad
\omega_t|_{t=0}=\omega_0.
\end{equation}
We recall the following conjecture of Tosatti and Weinkove.

\begin{conjecture}{\cite[Conjecture 4.1]{TW22}} \label{conjecture}
Let $X^n$ be a compact complex manifold with $K_X$ nef and big. Let
$\widetilde\omega(s)$ be the solution of the Chern-Ricci flow \eqref{main CRF} starting at an arbitrary
Hermitian metric $\omega_0$. Then
\[
\operatorname{diam}\left(X,\frac{\widetilde\omega(s)}{s+1}\right)\le C
\]
for all $s$ sufficiently large, and
\[
\left(X,\frac{\widetilde\omega(s)}{s+1}\right)\longrightarrow (Z,d)
\]
in the Gromov-Hausdorff topology for some compact metric space $(Z,d)$. Moreover,
if $E=\text{Null}(K_X)$, then $(Z,d)$ should be identified with the metric completion of
$(X\setminus E,\omega_{KE})$, where $\omega_{KE}$ is the singular K\"ahler-Einstein
current obtained as the limit of the normalized Chern-Ricci flow.
\end{conjecture}

When the initial metric is K\"ahler, this conjecture is known in several cases. On
surfaces of general type, the conjecture was proved independently by
Guo--Song--Weinkove \cite{GSW16} and Tian--Zhang \cite{TZ16}; the latter also
treated the K\"ahler case in complex dimension three. Under the additional assumption
of a uniform Ricci lower bound, Guo proved the conjecture for arbitrary projective manifolds of
general type \cite{Guo17}. The K\"ahler case in arbitrary dimension was completely
settled by Wang \cite{Wang18}. Recently, Jian--Song \cite{JS26} gives an alternative proof of the Gromov--Hausdorff convergence, and Lee--Tosatti-Zhang \cite{LTZ26} further establish the convergence even in the collapsing case. In the Hermitian setting, the problem is substantially
harder because the evolving metrics are not closed and the torsion of the Chern
connection enters the Green formula through first-order drift terms.

In our previous work \cite{Sun26}, the conjecture was proved under the additional
assumption that the initial metric is K\"ahler in a neighborhood of the null locus
$E=\text{Null}(K_X)$ (but in arbitrary dimensions). This hypothesis eliminates the torsion terms in the degenerating
region and allows one to adapt the Green function estimates from the K\"ahler setting.
The purpose of the present paper is to remove this local K\"ahler assumption in
complex dimension two.

Our main result is as follows.

\begin{theorem}\label{thm:main}
Let $X$ be a smooth compact complex minimal surface of general type and let
$\omega_0$ be an arbitrary Hermitian metric on $X$. Let $\omega_t$ be the normalized
Chern-Ricci flow \eqref{eq:intro-normalized-crf}. 
Then there exist uniform constants $C>0$, $c>0$, $\alpha>0$ and $r_0>0$ such that
\[
\operatorname{diam}(X,\omega_t)\le C
\]
for all $t\ge0$, and
\[
\operatorname{Vol}_{\omega_t}\big(B_{\omega_t}(x,r)\big)\ge cr^\alpha
\]
for every $x\in X$, every $0<r<r_0$, and every $t\ge0$. Moreover, $(X,\omega_t)$ converges to $\overline{(X\setminus E,\omega_{KE})}$ in the Gromov-Hausdorff topology as $t\to+\infty$. In particular, \cref{conjecture} holds true in dimension $2$.
\end{theorem}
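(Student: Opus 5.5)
We follow the Green function strategy of Guo--Phong--Song--Sturm, as adapted to the Hermitian setting in \cite{Sun26}. The overall plan is to prove, uniformly in $t$, a Green function estimate of the form
\[
\int_X |G_t(x,\cdot)|^{p}\,\omega_t^2+\int_X |\nabla G_t(x,\cdot)|^{q}\,\omega_t^2\le C
\]
for some $p,q>1$. From this, the diameter bound and the volume non-collapsing $\operatorname{Vol}(B_{\omega_t}(x,r))\ge cr^\alpha$ follow by the standard argument. One applies the Green formula to a cutoff of the distance function $d_t(x,\cdot)$, which gives $|\nabla d_t|\le1$. One then uses Hölder's inequality together with the Green estimates to control $d_t$ in an averaged sense. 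The relevant quantities are the volume form $\omega_t^2=e^{\dot\varphi+\varphi}\Omega$ and the known entropy bound $\int e^{F}F\,\Omega\le C$, which are standard along the flow since $K_X$ is nef and big. The essential difference from \cite{Sun26} is that the Chern Laplacian of a non-Kähler metric, acting on the Green formula, produces drift terms $\langle T,\partial G\rangle$ involving the torsion $T$ of $\omega_t$. Near the null locus $E$ these terms can no longer be killed by a local Kählerity assumption.

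\textbf{Step 1 (torsion estimate).} In dimension two we have $\partial\omega_t=\tau_t\wedge\omega_t$ for a Lee form $\tau_t$. Along the normalized flow, $\partial\omega_t=e^{-t}\partial\omega_0$, since $\omega_t=e^{-t}\omega_0+(1-e^{-t})\hat\chi+i\partial\bar\partial\varphi$ with $\hat\chi$ closed. We therefore expect a weighted bound of the form $|T|^2_{\omega_t}\le Ce^{-t}\operatorname{tr}_{\omega_t}\omega_0\cdot(\ldots)$, and after integration a global $L^2$ bound $\int|T|^2\omega_t^2\le C$. Dimension two is used here because the torsion is determined by a $1$-form, so $|T|^2\approx|\tau|^2$ is controlled by $e^{-2t}|\partial\omega_0|^2_{\omega_t}$, which in turn is bounded by $e^{-2t}(\operatorname{tr}_{\omega_t}\omega_0)^3$. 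Combined with the Tsuji-type $C^2$ estimate $\operatorname{tr}_{\omega_t}\omega_0\le Ce^{A\psi}$ away from $E$, this gives decay.

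\textbf{Step 2 (the main obstacle).} We need a Green-weighted torsion bound $\int_X|G_t(x,\cdot)|\,|T|^2\,\omega_t^2\le C$ that is uniform in $x$. I expect this to be the hardest step. To get it, I would first establish a uniform total variation bound $\int_X|\Delta_{\omega_t}|G_t||\,\omega_t^2\le C$. This follows since $\Delta G=\delta_x-1/V$ plus a Kato-type distributional term supported on $\{G=0\}$, together with the drift, which we absorb using Step 1. Next I would test the torsion evolution inequality against $|G|$, integrate by parts, and use the total variation bound to move the Laplacian onto $|G|$.

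\textbf{Step 3 (Green estimates via iteration).} We write the Chern Laplacian as the Riemannian Laplacian of the Hermitian metric plus a torsion drift. Then $G_t$ solves a real Poisson equation whose right-hand side contains $\langle\tau,dG\rangle$. We iterate linearly: $G=G^{(0)}+G^{(1)}+\cdots$, where each $G^{(k)}$ solves a real Poisson equation with source given by the drift of $G^{(k-1)}$. At each stage we apply the GPSS $L^1$-to-$L^p$ estimates for auxiliary complex Monge--Ampère equations, which hold under entropy bounds. The Green-weighted torsion bound of Step 2 makes the drift sources small in the relevant weighted norms, so the iteration converges and yields the $L^p$ and gradient bounds above.

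\textbf{Step 4 (convergence).} With the uniform diameter and non-collapsing bounds, we use smooth convergence of $\omega_t$ to $\omega_{KE}$ on compact subsets of $X\setminus E$. We also need that $E$ has small neighbourhoods of uniformly small $\omega_t$-volume, whose points lie within small $\omega_t$-distance of $X\setminus E$; this follows from the Green estimates via the non-collapsing. Following the argument of \cite{Sun26} (after Song and Wang), these facts give Gromov--Hausdorff convergence to the metric completion $\overline{(X\setminus E,\omega_{KE})}$, and hence \cref{conjecture} in dimension $2$.
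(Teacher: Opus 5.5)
Your outline has the right overall architecture: a real Green function with a torsion drift, a Kato-type total-variation bound, a Green-weighted torsion bound, a linear iteration, and auxiliary Monge--Amp\`ere equations. However, the analytic core does not close, and it breaks already in Step~1. You bound $|\tau_t|^2_{\omega_t}\le Ce^{-2t}|\partial\omega_0|^2_{\omega_t}\le Ce^{-2t}(\operatorname{tr}_{\omega_t}\omega_0)^3$ and then apply a Tsuji-type bound, which reads $\operatorname{tr}_{\omega_t}\omega_0\le Ce^{-A\psi}$ and blows up along $E$. This gives decay only on compact subsets of $X\setminus E$. Near $E$, exactly where the torsion must be controlled, you are left with $e^{-2t}e^{-3A\psi}$, which need not be integrable. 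So neither a uniform $L^2$ bound nor a small one follows; this is the situation of \cref{rmk:higher-dimensional-outlook}, which needs an extra vanishing-order hypothesis on $\partial\omega_0$.

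The dimension-two input you are missing is not that the torsion is a $1$-form, since the cubic bound holds in every dimension. It is an eigenvalue computation. In an $\omega_0$-unitary frame diagonalizing $\omega_t$, antisymmetry of $A^0_{ij\bar k}$ gives $(\tau_t)_1=e^{-t}\lambda_2^{-1}A^0_{12\bar2}$ and $(\tau_t)_2=-e^{-t}\lambda_1^{-1}A^0_{12\bar1}$. Hence $|\tau_t|^2_{\omega_t}\le Ce^{-2t}(\lambda_1^{-1}\lambda_2^{-2}+\lambda_1^{-2}\lambda_2^{-1})$. The volume equivalence $C^{-1}\le\lambda_1\lambda_2\le C$ (from $|\varphi_t|,|\dot\varphi_t|\le C$) reduces this to $Ce^{-2t}\operatorname{tr}_{\omega_t}\omega_0$, which is \emph{linear} in the trace. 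Since $\int_X\operatorname{tr}_{\omega_t}\omega_0\,\omega_t^2=2\int_X\omega_0\wedge\omega_t\le C$ (integrate $\sqrt{-1}\partial\bar\partial\varphi_t$ by parts against $\omega_0$), you get $\int_X|\theta_t|^2\omega_t^2\le Ce^{-2t}$. With $\chi\ge0$ you also get $|\theta_t|^2_{\omega_t}\le Ce^{-t}\operatorname{tr}_{\omega_t}\hat\omega_t$, which is the form used in everything afterwards.

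Step~2 inherits this problem, and its mechanism is also off. No torsion evolution inequality is needed, and one with curvature terms would be uncontrollable near $E$; the Green-weighted bound is elliptic. Write $\operatorname{tr}_{\omega_t}\hat\omega_t=2-\tfrac12\Delta_{g_t}\varphi_t+\langle\nabla\varphi_t,\theta_t^\#\rangle_{\omega_t}$ and test against $|G|$. Then use the total-variation bound for the \emph{real} Laplacian $\Delta_{g_t}|G|$, together with $|\varphi_t|\le C$. The real Laplacian is drift-free, so nothing needs absorbing there, unlike in your Chern-Laplacian version. Finally absorb $\int_X|G|\,|\nabla\varphi_t|\,|\theta_t|\,\omega_t^2$ by Cauchy--Schwarz and the weighted energy identity.

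The outcome is $\int_X|G|\,|\theta_t|^2\omega_t^2\le Ce^{-t}(1+\int_X|G|\omega_t^2)$. It must therefore be proved jointly with the uniform $L^1$ bound for $G_t$, which your plan never establishes. A similar absorption gives the lower bound $\inf G_t\ge -C$, also missing from your plan. Without it, $\mathcal G_t>0$ fails and the gradient estimate feeding the diameter bound is unavailable.

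Finally, a Green-weighted bound of size $C$, as you state it, would not make Step~3 converge. The iteration contracts only because the weighted torsion is $O(e^{-t})$, giving a factor $Ce^{-t/2}<1$ for large $t$. In the paper the iteration runs on smooth sources $F$, not on the Green function. It builds a barrier $Q_F$ with $\Delta_{\omega_t}Q_F=F-c_F$ and $\operatorname{osc}Q_F+|c_F|$ controlled, which feeds a drifted version of the sup estimate.
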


Let us now explain the main analytic difficulties and the new ideas in the proof.
Let $\Delta_{g_t}$ be the real Laplacian associated to $\omega_t$, and let
$\Delta_{\omega_t}$ be the Chern Laplacian. If $\theta_t$ denotes the real torsion
one-form of $\omega_t$, then
\[
\Delta_{g_t}f
=
2\Delta_{\omega_t}f
+
2\langle\nabla f,\theta_t^\#\rangle_{\omega_t}.
\]
Thus, when one applies the real Green formula to equations involving the Chern
Laplacian, one has to control drift terms of the form
\[
\int_X G_t(x,y)\langle\nabla u,\theta_t^\#\rangle_{\omega_t}\,\omega_t^2.
\]
These terms are absent in the K\"ahler setting and are the main obstruction to applying
the method of \cite{GPSS24a,GPSS24b} directly.

The first new observation is a surface torsion estimate (see \cref{lem:surface-torsion-estimates} below). Since
$\omega_t=\hat\omega_t+\sqrt{-1}\partial\bar\partial\varphi_t$ and
$\hat\omega_t=\chi+e^{-t}(\omega_0-\chi)$ with $\chi$ closed, we have
$\partial\omega_t=e^{-t}\partial\omega_0$. In complex dimension two, a pointwise
eigenvalue computation gives
\[
|\theta_t|_{\omega_t}^2\le Ce^{-2t}\operatorname{tr}_{\omega_t}\omega_0.
\]
Consequently,
\[
\int_X|\theta_t|_{\omega_t}^2\,\omega_t^2\le Ce^{-2t}.
\]
Moreover, choosing $\chi\ge0$, we obtain the stronger pointwise estimate
$|\theta_t|_{\omega_t}^2\le Ce^{-t}\operatorname{tr}_{\omega_t}\hat\omega_t$.
This is the only place where the surface assumption is used in an essential way.

The second ingredient is a Green-weighted torsion estimate (\cref{rmk:kato-torsion-input-surface} below). We prove first that the
distributional Laplacian of $|G_t(x,\cdot)|$ has uniformly bounded total variation.
This is a Kato-type argument for the positive and negative parts of the Green function,
in the spirit of Brezis--Ponce \cite{BP04}. Combining this with the surface torsion
estimate and a weighted energy identity, we obtain
\[
\sup_{z\in X}\int_X |G_t(z,y)|\,|\theta_t(y)|_{\omega_t}^2\,\omega_t^2(y)
\le Ce^{-t}.
\]
This estimate can be seen as the analytic replacement for the local K\"ahler assumption in
\cite{Sun26}. It allows us to absorb all first-order torsion terms appearing in the
Green function estimates.

The third new ingredient is a linear iteration method based
on real Poisson equations. Starting from a smooth nonnegative function $F$, we solve
successively
\[
\Delta_{g_t}q_i=2(h_i-\bar h_i),
\qquad
h_{i+1}=\langle\nabla q_i,\theta_t^\#\rangle_{\omega_t}.
\]
The Green-weighted torsion estimate gives
\[
D_t(h_{i+1})\le Ce^{-t/2}D_t(h_i),
\]
so the iteration converges in the Green norm. Summing the series produces a smooth
function $Q_F$ satisfying
\[
\Delta_{\omega_t}Q_F=F-c_F,
\qquad
\operatorname{osc}_XQ_F+|c_F|\le CD_t(F).
\]
This allows us to prove the Laplacian estimate needed for the Green lower bound in
the presence of the first-order torsion term.

With these tools in hand, we establish the three Green function estimates required by
the method of Guo--Phong--Song--Sturm. Namely, if $G_t$ is the real Green function normalized by
$\int_XG_t(x,y)\omega_t^2(y)=0$, and if
$\mathcal G_t(x,y)=G_t(x,y)-\inf_zG_t(x,z)+V_t^{-1}$, then
\[
\sup_x\int_X|G_t(x,y)|\,\omega_t^2(y)\le C,
\qquad
\inf_{x,y}G_t(x,y)\ge -C,
\]
and for some uniform $\varepsilon>0$,
\[
\sup_x\int_X\mathcal G_t(x,y)^{1+\varepsilon}\,\omega_t^2(y)\le C.
\]
The last estimate is obtained by combining the drifted Laplacian estimate with
auxiliary complex Monge-Amp\`ere equations, following the philosophy of
\cite{GPS24,GPSS24a,Sun26}.

After the Green function estimates are established, the rest of the proof is purely
Riemannian. The $L^{1+\varepsilon}$ bound for $\mathcal G_t$ gives an
$L^s$ bound for $|\nabla G_t|$, for some $s>1$. Applying the Green representation
formula to distance functions then gives the uniform diameter estimate. A standard
cutoff argument with distance functions gives the volume non-collapsing estimate.

It remains to identify the Gromov-Hausdorff limit. It was already shown by Song \cite{Song14} that $\overline{(X\setminus E,\omega_{KE})}$ is homeomorphic to the canonical model $(X_{can},d_{can})$ of $X$, since in dimension $2$ we have that $X$ is projective.
By Gromov's precompactness theorem, the diameter and volume estimates give sequential Gromov-Hausdorff convergence. Moreover,
the volume non-collapsing estimate, together with the uniform upper bound
$\omega_t^2\le C\Omega$, rules out the formation of extra limit points over arbitrarily
small neighborhoods of $E$. Finally, since $f(E)$ is a finite set, one can compare the
distance $d_t$ with the quotient distance obtained from $d_{can}$ by collapsing small
neighborhoods of the orbifold points, where we have again used the dimension $2$ assumption essentially. Letting the size of these neighborhoods tend to
zero gives the distance comparison needed to identify every subsequential limit with
$(Y,d_{can})$. This proves the full Gromov-Hausdorff convergence.

\begin{remark}\label{rmk:higher-dimensional-outlook}
Let us also mention a possible higher dimensional replacement for the local
K\"ahler assumption in \cite{Sun26}. Let $\psi\le0$ be a quasi-plurisubharmonic
function with analytic singularities along $\text{Null}(K_X)$ such that
\[
\chi+\sqrt{-1}\partial\bar\partial\psi\ge c\omega_0,\quad c>0,
\]
and suppose that the standard singular lower bound along the normalized
Chern-Ricci flow in \cite[Lemma 3.5]{Gill13} takes the form
\[
\omega_t\ge C^{-1}e^{B\psi}\omega_0.
\]
In a local $\omega_0$-unitary frame diagonalizing $\omega_t$, the torsion term is
controlled by
\[
|\theta_t|_{\omega_t}^2
\le
Ce^{-2t}\sum_{\ell\ne j}
\lambda_\ell^{-1}\lambda_j^{-2}|A^0_{\ell j\bar j}|^2,
\]
where $A^0$ denotes the tensor determined by $\partial\omega_0$. Hence, if we assume that the
initial torsion vanishes along the null locus to the order
\[
|\partial\omega_0|_{\omega_0}^2\le C e^{(3B+\delta)\psi}
\]
for some $\delta>0$, then the above singular lower bound gives
\[
|\theta_t|_{\omega_t}^2\le Ce^{-2t}e^{\delta\psi}\le Ce^{-2t}.
\]
In particular,
\[
\int_X|\theta_t|_{\omega_t}^2\,\omega_t^n\le Ce^{-2t},
\qquad
\int_X|G_z|\,|\theta_t|_{\omega_t}^2\,\omega_t^n
\le
Ce^{-2t}\int_X|G_z|\,\omega_t^n .
\]
Thus, under this finite order vanishing assumption, the (Green weighted) $L^2$ torsion terms are exponentially small (repeat the proof of \cref{prop:surface-L1-green}) and the arguments in this paper could be carried over in higher dimensions. This
condition is much weaker than assuming that $\omega_0$ is K\"ahler in a
neighborhood of the null locus $E$, which corresponds to the vanishing of
$\partial\omega_0$ there. 
\end{remark}

The paper is organized as follows. In \cref{sec:preliminaries}, we recall the
basic estimates for the Chern-Ricci flow and the relation between the real and complex
Laplacians. In \cref{section 3}, we prove the surface torsion estimates and the Laplacian
estimates needed later. In \cref{section 4}, we establish the $L^1$ bound, the lower bound,
and the $L^{1+\varepsilon}$ estimate for the real Green function. In \cref{section 5}, we prove the diameter and volume non-collapsing estimates.
Finally, in \cref{section 6}, we identify the Gromov-Hausdorff limit with
the canonical model endowed with its orbifold K\"ahler-Einstein distance.

\section{Preliminaries}\label{sec:preliminaries}

In this section we collect some standard facts and preliminary estimates for the
Chern-Ricci flow which will be used throughout the paper. We mostly follow the
notation and conventions in \cite{Gill13,TW15,Sun26}.

Let $(X,\omega_X)$ be a compact Hermitian manifold with $K_X$ nef and big. We first recall the definition of null locus:
\begin{definition}\label{def: Null locus}
    The null locus $\text{Null}(K_X)$ of the big line bundle $K_X$ is defined to be the union of all positive-dimensional irreducible analytic subvarieties $V\subset X$ such that if $\operatorname{dim}_{\mathbb{C}}V=k$, then
    $$
\int_V(c_1^{BC}(K_X))^k=0.
    $$
When the underlying manifold $X$ is K\"ahler (which is true in our case $n=2$ since minimal surfaces of general type are projective) or lies in the Fujiki class $\mathcal{C}$, it was shown by Collins-Tosatti \cite[Theorem 1.1]{CT15} that $\text{Null}(K_X)$ coincides with the non-K\"ahler locus of $K_X$ and hence is itself a proper analytic subvariety of $X$. When $X$ is a general Hermitian manifold, some progress towards this issue was made by Dang \cite{Dang24}.   
\end{definition}

Let $\omega_0$ be an arbitrary Hermitian metric on $X$. We consider the Chern-Ricci
flow
\[
\frac{\partial}{\partial s}\widetilde\omega(s)
=
-Ric^C(\widetilde\omega(s)),
\qquad
\widetilde\omega(0)=\omega_0.
\]
Equivalently, after the change of variables $t=\log(1+s)$ and
$\omega_t=(1+s)^{-1}\widetilde\omega(s)$, we obtain the normalized Chern-Ricci flow
\begin{equation}\label{eq:normalized-CRF}
\frac{\partial}{\partial t}\omega_t
=
-Ric^C(\omega_t)-\omega_t,
\qquad
\omega_{t}|_{t=0}=\omega_0.
\end{equation}
Here
\[
Ric^C(\omega_t)
=
-\sqrt{-1}\partial\bar\partial\log\omega_t^n
\]
is the Chern-Ricci form.

Let $\chi$ be a smooth representative of $-c_1^{BC}(X)$ and choose a smooth volume form
$\Omega$ such that $\chi=\sqrt{-1}\partial\bar\partial\log\Omega$. As in \cite{Gill13, Sun26}, $\chi$ can be chosen to be semi-positive and such that $\chi<\omega_0$ after a rescaling of $\omega_0$. We write $\hat\omega_t:=\chi+e^{-t}(\omega_0-\chi)$. The normalized flow can be written as $\omega_t=\hat{\omega}_t+\sqrt{-1}\ddbar\varphi_t$,
where $\varphi_t$ solves
\begin{equation}\label{eq:potential-equation}
\frac{\partial\varphi_t}{\partial t}
=
\log\frac{(\hat\omega_t+\sqrt{-1}\partial\bar\partial\varphi_t)^n}{\Omega}
-\varphi_t,
\qquad
\varphi_0=0.
\end{equation}
We shall use the notation
\[
\dot\varphi_t:=\frac{\partial\varphi_t}{\partial t},
\qquad
\mu_t:=\omega_t^n,
\qquad
V_t:=\int_X\mu_t.
\]
We next recall the volume notions for nef Bott-Chern classes introduced in \cite{GL22, BGL25}. If $\beta$ is a nef real $(1,1)$-form, its lower volume is defined by
\[
\underline{\text{Vol}}(\{\beta\})
:=
\lim_{\varepsilon\to0^+}
\inf_{u\in C^\infty(X),\,\beta+\varepsilon\omega_X+\sqrt{-1}\partial\bar\partial u>0}
\int_X(\beta+\varepsilon\omega_X+\sqrt{-1}\partial\bar\partial u)^n.
\]
The positive volume property will be used through the following consequence, which was recently established by Pang--Sun--Wang--Zhou \cite{PSWZ25}, generalizing the corresponding results of Guo--Phong--Tong--Wang \cite{GPT23, GPTW24} on K\"ahler manifolds.

\begin{theorem}\cite[Theorem 15.4]{PSWZ25} \label{a priori in nef class for MA}
    Let $\{\beta\}\in BC^{1,1}(X)$ be a nef Bott-Chern class satisfying $\underline{\operatorname{Vol}}(\{\beta\})>0$. Assume also $\varphi_t\in \operatorname{PSH}(X,\chi+t\omega_X)\cap C^\infty(X)$ satisfying
$$
(\beta+t\omega_X+dd^c\varphi_t)^n= c_te^{F_t}\omega_X^n,\quad\sup_X\varphi_t=0.
$$
Here $F_t\in C^\infty(X)$. We also fix a constant $p>n$. Then there exists a uniform constant $C$ depending on $\chi,\omega_X,n,p$, the upper bound of $\int_{X}e^{F_t(z)}[\log(1+e^{F_t(z)})]^p\omega_X^n$, the lower bound of $\int_Xe^{\frac{F_t}{n}}\omega_X^n$ and the lower bound of  $\underline{\operatorname{Vol}}(\{\beta\})$ such that
$$
0\leq-\varphi_t+\mathcal{V}_t\leq C.
$$
Here $\mathcal{V}_t:=\sup\{u\;|\;u\in \operatorname{PSH}(X,\beta+t\omega_X),u\leq0\}$ is the largest non-positive $(\beta+t\omega_X)$-plurisubharmonic function.
\end{theorem}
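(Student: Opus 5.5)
The lower bound $-\varphi_t+\mathcal V_t\ge0$ is immediate. Since $\varphi_t\in\operatorname{PSH}(X,\beta+t\omega_X)$ and $\sup_X\varphi_t=0$, the function $\varphi_t$ is a competitor in the envelope defining $\mathcal V_t$, so $\varphi_t\le\mathcal V_t$.

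All the work is in the upper bound $\mathcal V_t-\varphi_t\le C$. I would adapt the auxiliary Monge--Amp\`ere method of Guo--Phong--Tong (and Guo--Phong--Tong--Wang in the nef setting) to the Hermitian case, and organize the argument in three steps.

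\textbf{Step 1: two-sided bounds on $c_t$.}
\begin{itemize}
\item For the lower bound, the form $\beta+t\omega_X+dd^c\varphi_t$ is a smooth positive representative. Approximating it by $\beta+\varepsilon\omega_X+dd^c u$ with $u=\varphi_t$, the definition of $\underline{\operatorname{Vol}}$ gives $c_t\int_Xe^{F_t}\omega_X^n\ge\tfrac12\underline{\operatorname{Vol}}(\{\beta\})$ once $t$ is small. Combined with the entropy bound on $\int e^{F_t}$, this bounds $c_t$ from below.
\item For the upper bound, I would use the arithmetic--geometric mean inequality pointwise: $c_t^{1/n}e^{F_t/n}\le\operatorname{tr}_{\omega_X}(\beta+t\omega_X+dd^c\varphi_t)$. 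I then integrate against $\omega_X^n$ after replacing $\omega_X$ by a Gauduchon metric, so that $\int dd^c\varphi_t\wedge\omega_X^{n-1}=0$. This bounds $c_t^{1/n}\int e^{F_t/n}$ above by a constant, and the assumed lower bound on $\int e^{F_t/n}$ then gives the upper bound on $c_t$.
\end{itemize}

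\textbf{Step 2: the auxiliary equation.} For $s>0$ I set $u_s:=(-\varphi_t+\mathcal V_t-s)_+$ and $A_s:=\int u_s e^{F_t}\omega_X^n$. I would solve the Hermitian Monge--Amp\`ere equation
\[
(\beta+t\omega_X+dd^c\psi_{s})^n=a_{s}\,\frac{u_s\,e^{F_t}}{A_s}\,\omega_X^n,\qquad\sup_X\psi_s=0,
\]
where $u_s$ is smoothed first and the constant $a_s$ is determined by Tosatti--Weinkove's theorem. Step 1, applied to this equation, controls $a_s$ above and below.

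Next I compare the test function
\[
\Phi=-\epsilon(-\psi_s+\Lambda)^{\frac{n}{n+1}}+(\mathcal V_t-\varphi_t-s)
\]
and show $\Phi\le0$. At an interior maximum point, I compare $dd^c\varphi_t$ with $dd^c\psi_s$ and apply the arithmetic--geometric mean inequality for the two Monge--Amp\`ere measures. Here $\mathcal V_t$ is only $C^{1,1}$, so I would replace it by the smooth envelopes of Berman / Chu--Zhou and pass to the limit. The parameters $\epsilon$ and $\Lambda$ are chosen in terms of $c_t$, $a_s$ and $A_s$. The resulting inequality gives
\[
u_s^{\frac{n+1}{n}}\le C A_s^{\frac1n}(-\psi_s+\Lambda).
\]

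\textbf{Step 3: iteration.} Taking exponentials and using the $\alpha$-invariant (Skoda) bound for $\psi_s$, which is uniform in the nef class, I obtain a bound on $\int_{\{u_s>0\}}\exp\!\big(c\,u_s^{(n+1)/n}/A_s^{1/n}\big)e^{F_t}\omega_X^n$. Young's inequality with the entropy bound $\int e^{F_t}[\log(1+e^{F_t})]^p$, where $p>n$, then yields a De Giorgi-type decay. Writing $\phi(s):=\int_{\{\varphi_t-\mathcal V_t<-s\}}e^{F_t}\omega_X^n$, the decay reads
\[
r\,\phi(s+r)\le C\phi(s)^{1+\delta_0}.
\]
The usual iteration lemma then shows that $\phi$ vanishes for $s\ge s_0(C)$. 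To start the iteration I need $s_0$ to be uniform, which requires an initial $L^1$ bound on $\mathcal V_t-\varphi_t$; this follows from compactness of normalized quasi-psh functions.

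\textbf{Main obstacle.} I expect the hardest part to be Step 2 in the Hermitian, non-closed setting. Three difficulties arise together:
\begin{itemize}
\item torsion terms in the maximum principle,
\item the lack of cohomological invariance of $\int(\beta+t\omega_X+dd^c\psi)^n$, which is needed to pin down $a_s$,
\item the low regularity of the envelope $\mathcal V_t$.
\end{itemize}
My first attempt would be to control $a_s$ purely through the lower-volume hypothesis, together with a Guedj--Lu type bounded-mass argument for the upper side. The maximum principle itself is pointwise and torsion-free: it only uses $dd^c$ at a single point, so torsion does not enter there.
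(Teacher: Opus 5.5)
The paper does not prove this statement. It is quoted from Pang--Sun--Wang--Zhou (Theorem 15.4) and used as a black box, so your argument has to stand on its own. The architecture is sound: two-sided control of the normalizing constants, an auxiliary equation with density $u_se^{F_t}/A_s$, a pointwise comparison, then Skoda, Young and a De Giorgi iteration with $\delta_0=\frac1n-\frac1p>0$. You are also right that torsion never enters the pointwise maximum principle.

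\textbf{The gap is the test function in Step 2.} Write $\theta:=\beta+t\omega_X$, $\omega_\varphi:=\theta+dd^c\varphi_t$, $\omega_\psi:=\theta+dd^c\psi_s$, $\theta_{\mathcal V}:=\theta+dd^c\mathcal V_t\ge0$, and $\delta:=\frac{n\epsilon}{n+1}(-\psi_s+\Lambda)^{-1/(n+1)}$. For your $\Phi=-\epsilon(-\psi_s+\Lambda)^{\frac{n}{n+1}}+(\mathcal V_t-\varphi_t-s)$, the computation at a maximum point gives
\[
\operatorname{tr}_{\omega_\varphi}dd^c\Phi\ \ge\ \delta\operatorname{tr}_{\omega_\varphi}\omega_\psi-n+\operatorname{tr}_{\omega_\varphi}\big(\theta_{\mathcal V}-\delta\theta\big),
\]
and the last term has no sign.
\begin{itemize}
\item If $\theta\ge0$, then $\mathcal V_t\equiv0$ and this term equals $(1-\delta)\operatorname{tr}_{\omega_\varphi}\theta\ge0$. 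But in that case the envelope plays no role at all.
\item If $\beta+t\omega_X$ is not semipositive, $\theta_{\mathcal V}$ degenerates off the contact set $\{\mathcal V_t=0\}$, since $\theta_{\mathcal V}^n=0$ there. In a null direction $v$ one has $(\theta_{\mathcal V}-\delta\theta)(v,\bar v)=-\delta\,\theta(v,\bar v)$, which can be negative.
\item Nothing controls $\omega_\varphi^{-1}$, so the $\omega_\varphi$-trace of this term can be arbitrarily negative. The inequality at the maximum point then no longer bounds $u_s$, whatever $\epsilon$ and $\Lambda$ are.
\end{itemize}
Changing the representative of $\{\beta\}$ does not help either. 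A potential making $\beta+t\omega_X$ semipositive need not be bounded uniformly in $t$, which is exactly why the statement involves $\mathcal V_t$.

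\textbf{The fix} is to put the envelope inside the power: take $\Phi=-\epsilon(\mathcal V_t-\psi_s+\Lambda)^{\frac{n}{n+1}}+(\mathcal V_t-\varphi_t-s)$, with $\delta$ defined accordingly.
\begin{itemize}
\item Since $-dd^c(\mathcal V_t-\psi_s)=\omega_\psi-\theta_{\mathcal V}$, the second-order terms combine to $\delta\operatorname{tr}_{\omega_\varphi}\omega_\psi+(1-\delta)\operatorname{tr}_{\omega_\varphi}\theta_{\mathcal V}-n\ge\delta\operatorname{tr}_{\omega_\varphi}\omega_\psi-n$, once $\Lambda$ forces $\delta\le1$.
\item The base is positive because $\psi_s\le\mathcal V_t$, as $\psi_s$ is a competitor for the envelope.
\item Since $\mathcal V_t\le0$, we have $\mathcal V_t-\psi_s+\Lambda\le-\psi_s+\Lambda$. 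So your displayed inequality and the Skoda step survive.
\end{itemize}
For the same reason, the smooth approximants of $\mathcal V_t$ must be genuinely $\theta$-psh, e.g.\ Berman-type solutions of $(\theta+dd^cu_k)^n=e^{ku_k}\omega_X^n$. A loss of positivity $-\eta\,\omega_X$ would reintroduce an uncontrolled term $\eta\operatorname{tr}_{\omega_\varphi}\omega_X$.

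\textbf{Two smaller points.}
\begin{itemize}
\item Step 1 does not bound $a_s$ from above, since nothing bounds $\int_X(u_se^{F_t}/A_s)^{1/n}\omega_X^n$ from below. But no upper bound is needed. The comparison only uses $c_t\le C$ and $a_s\ge c$, through $\epsilon^{n+1}\sim c_tA_s/a_s$. The lower bound $a_s=\int_X\omega_\psi^n\ge c$ follows from the lower volume, because $\psi_s$ is an admissible smooth competitor. So the ``bounded mass'' obstacle you list is not really there.
\item Skoda gives exponential integrability of $u_s^{(n+1)/n}A_s^{-1/n}$ against $\omega_X^n$, not against $e^{F_t}\omega_X^n$. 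It is Young's inequality with the $p$-entropy that converts this into the bound on $\int_Xu_s^{p(n+1)/n}A_s^{-p/n}e^{F_t}\omega_X^n$ that drives the decay of $\phi$.
\end{itemize}
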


The following estimates are established by Gill \cite{Gill13} and then further generalized by Sun \cite{Sun26} using \cref{a priori in nef class for MA}. 

\begin{lemma}\cite{Gill13, Sun26} \label{lem:basic-flow-estimates}
Along the normalized Chern-Ricci flow, there exists a uniform constant $C>0$ such that
\[
|\varphi_t|\le C,\qquad |\dot\varphi_t|\le C,
\]
and hence
\[
C^{-1}\omega_X^n\le \omega_t^n\le C\omega_X^n.
\]
Moreover, $\omega_t$ converges as currents to a closed positive current $\omega_{KE}$,
which is smooth on $X\setminus E$, where $E=\text{Null}(K_X)$, and
\[
\omega_t\longrightarrow \omega_{KE}
\quad\text{in }C^\infty_{\mathrm{loc}}(X\setminus E).
\]
\end{lemma}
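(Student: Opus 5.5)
The estimates in \cref{lem:basic-flow-estimates} are established in \cite{Gill13,Sun26}, so the plan is to recall how they follow from the parabolic Monge--Amp\`ere equation \eqref{eq:potential-equation} via the maximum principle, with \cref{a priori in nef class for MA} supplying the uniform $C^0$ bound.

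\textbf{Step 1: upper bounds for $\varphi_t$ and $\dot\varphi_t$.} Since $\hat\omega_t\le C\omega_0$, at a maximum point of $\varphi_t$ we have $\sqrt{-1}\ddbar\varphi_t\le0$. Hence $\partial_t\varphi_t\le \log(C\omega_0^n/\Omega)-\varphi_t$, and the maximum principle gives $\varphi_t\le C$. Differentiating \eqref{eq:potential-equation} in $t$ gives
\[
(\partial_t-\Delta_{\omega_t})\dot\varphi_t=-e^{-t}\operatorname{tr}_{\omega_t}(\omega_0-\chi)-\dot\varphi_t .
\]
Applying the maximum principle to $(e^t-1)\dot\varphi_t-\varphi_t-nt$, or to a similar Gill-type quantity, bounds $\dot\varphi_t$ from above. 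Because $\omega_t^n=e^{\dot\varphi_t+\varphi_t}\Omega$, this yields the volume upper bound $\omega_t^n\le C\omega_X^n$.

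\textbf{Step 2: lower bound for $\varphi_t$.} This is the delicate point, because the class $\{\chi\}$ is only nef and big. The plan is to read the flow at each fixed time as an elliptic Monge--Amp\`ere equation in the degenerating family of classes $\hat\omega_t=\chi+e^{-t}(\omega_0-\chi)$:
\[
\omega_t^n=e^{\dot\varphi_t+\varphi_t}\Omega .
\]
By Step 1 the density $e^{F_t}=e^{\dot\varphi_t+\varphi_t}$ is bounded above. This gives the required entropy upper bound. The lower bound on $\int e^{F_t/n}$ should come from the volume being comparable to $\underline{\operatorname{Vol}}(\{\chi\})>0$, which is positive because $K_X$ is big; it is also needed as a normalization constant.

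Applying \cref{a priori in nef class for MA} with $\beta=\chi$ then controls $\varphi_t-\sup\varphi_t$ from below by $\mathcal V_t$. Since $\mathcal V_t$ is bounded below, being at least the bounded envelope for a nef and big class, this gives $\operatorname{osc}\varphi_t\le C$. Comparing total volumes, $\int\omega_t^n\ge c>0$ forces $\sup\varphi_t\ge -C$. Together these give $|\varphi_t|\le C$.

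\textbf{Step 3: lower bound for $\dot\varphi_t$.} To bound $\dot\varphi_t$ from below, apply the maximum principle to $\dot\varphi_t+A\varphi_t$ for large $A$. The remaining quantity $\operatorname{tr}_{\omega_t}\chi$ is handled using $\chi\ge0$, following Gill's argument. This gives the two-sided volume bound.

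\textbf{Step 4: convergence.} First, $\varphi_t+\,$const is monotone up to controlled error, since $\partial_t(\varphi_t+Ce^{-t})$ is essentially bounded. So $\varphi_t$ converges to a bounded $\chi$-psh function $\varphi_\infty$ solving the Kähler--Einstein equation $(\chi+\sqrt{-1}\ddbar\varphi_\infty)^n=e^{\varphi_\infty}\Omega$. Next, local higher-order estimates on $X\setminus E$ follow from a Tsuji-type trick. We take $\psi$ with $\chi+\sqrt{-1}\ddbar\psi\ge c\omega_0$ and singular along $E$; such $\psi$ exists by Collins--Tosatti \cite{CT15}. A $C^2$ estimate on $\operatorname{tr}_{\omega_0}\omega_t$ with weight $e^{-A(\varphi_t-\psi)}$ then follows, as in Gill. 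Evans--Krylov or Calabi-type estimates and Schauder estimates then give $C^\infty_{\mathrm{loc}}$ convergence.

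\textbf{Main obstacle.} The main obstacle is Step 2: checking that the hypotheses of \cref{a priori in nef class for MA} hold uniformly as the class degenerates, namely a lower bound on $\underline{\operatorname{Vol}}$ and the normalization of $c_t$.
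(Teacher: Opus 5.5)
The paper gives no argument beyond citing \cite{Gill13,Sun26}, with the uniform $C^0$ bound for $\varphi_t$ obtained from \cref{a priori in nef class for MA}. Your Steps 1 and 2 follow that route and are fine. Note that since $\chi\ge0$ one simply has $\mathcal V_t\equiv0$; for a general nef and big class the envelope need not be bounded, so your stated reason is not right in general. The genuine gap is in Step 3, which is where the real difficulty lies, rather than in Step 2.

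\textbf{Step 3 does not give a uniform bound.} Along the flow,
\[
(\partial_t-\Delta_{\omega_t})(\dot\varphi_t+A\varphi_t)
=(A-1)\dot\varphi_t-An+(A-1)e^{-t}\operatorname{tr}_{\omega_t}\omega_0
+\big(A+(1-A)e^{-t}\big)\operatorname{tr}_{\omega_t}\chi .
\]
For $A>1$ the coefficient of $\dot\varphi_t$ has the wrong sign for a minimum principle. The term $\operatorname{tr}_{\omega_t}\chi\ge0$ can only be discarded. The only coercive term is $(A-1)e^{-t}\operatorname{tr}_{\omega_t}\omega_0\ge c\,e^{-t}e^{-\dot\varphi_t/n}$ (by AM--GM and $\varphi_t\le C$), and it degenerates like $e^{-t}$. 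At a minimum point you therefore only get $\dot\varphi_t\ge -C(1+t)$.

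For $A<1$ the sign of the $\dot\varphi_t$ coefficient is good, but then $-(1-A)e^{-t}\operatorname{tr}_{\omega_t}\omega_0$ appears, and it is unbounded near $E$. Adding a Tsuji-type term $-B\psi$ restores coercivity, but only yields bounds singular along $E$, such as $\dot\varphi_t\ge C\psi-C$. So the lower bound $\omega_t^n\ge C^{-1}\omega_X^n$ is not established.

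\textbf{The missing idea.} Combine the uniform $C^0$ bound from Step 2 with a one-sided bound on a time derivative. Set $u:=\dot\varphi_t+\varphi_t=\log(\omega_t^n/\Omega)$, $\eta:=\partial_t\omega_t=\chi+\sqrt{-1}\partial\bar\partial u-\omega_t$ and $v:=\partial_tu=\operatorname{tr}_{\omega_t}\eta$. Then $\partial_t\eta=\sqrt{-1}\partial\bar\partial v-\eta$, hence
\[
(\partial_t-\Delta_{\omega_t})v=-v-|\eta|_{\omega_t}^2\le -v-\frac{v^2}{n}.
\]
This computation is valid for Hermitian metrics, since the Chern Laplacian has no zeroth-order term. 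By the maximum principle $v\le C_0:=\max\{\sup_Xv|_{t=0},0\}$, which is a lower bound for the Chern scalar curvature.

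Thus for $s\in[t,t+1]$ we have $\dot\varphi_s=u(s)-\varphi_s\le u(t)+C_0-\varphi_s$. Integrating in $s$ over $[t,t+1]$ gives, pointwise on $X$,
\[
u(t)\ge\varphi_{t+1}-\varphi_t+\int_t^{t+1}\varphi_s\,ds-C_0\ge -C,
\]
by Step 2. Hence $\dot\varphi_t\ge -C$. This is not circular, because Step 2 used only the upper bounds from Step 1.

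\textbf{A smaller point in Step 4.} Boundedness of $\partial_t\varphi_t$ does not give convergence. You need the decay $\dot\varphi_t\le C(1+t)e^{-t}$, which your Step 1 quantity actually provides. It makes $\varphi_t+C(2+t)e^{-t}$ nonincreasing for $t\ge1$, and together with the uniform bound this gives monotone convergence to a bounded $\varphi_\infty$.
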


We will repeatedly use the relation between the real and complex Laplacians on a Hermitian
manifold. Let $\omega=\sqrt{-1}g_{i\bar j}dz^i\wedge d\bar z^j$ be a Hermitian metric.
The complex Laplacian is
\[
\Delta_\omega f=g^{i\bar j}\partial_i\partial_{\bar j}f,
\]
while the real Laplacian of the associated Riemannian metric $g$ is denoted by $\Delta_g$.

Let $\tau=\Lambda_\omega(\partial\omega)$ be the trace torsion $(1,0)$-form and set $\theta:=\tau+\bar\tau$.
In local coordinates, we have
\[
\tau=
\left(
g^{j\bar k}\partial_\ell g_{j\bar k}
-
g^{j\bar k}\partial_jg_{\ell\bar k}
\right)dz^\ell.
\]
Equivalently, $\tau$ is characterized by
\[
\tau\wedge\omega^{n-1}=\partial\omega^{n-1}.
\]
We refer the reader to \cite[Lemma 3.1]{Sun26} for more details. The following lemma will be used frequently:
\begin{lemma}\label{lem:real-complex-laplacian}
For every $f\in C^2(X)$,
\[
\Delta_g f
=
2\Delta_\omega f
+
2\langle df,\theta\rangle_\omega
=
2\Delta_\omega f
+
2\langle\nabla f,\theta^\#\rangle_\omega.
\]
\end{lemma}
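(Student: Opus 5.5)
The identity is local and linear in $f$, so I will verify it in local holomorphic coordinates at a fixed point. Take the convention $\Delta_\omega f = g^{i\bar j}\partial_i\partial_{\bar j}f$ and $\tau = \tau_\ell\,dz^\ell$ with
\[
\tau_\ell = g^{j\bar k}\partial_\ell g_{j\bar k} - g^{j\bar k}\partial_j g_{\ell\bar k}.
\]
I will also fix the normalization of $\langle\cdot,\cdot\rangle_\omega$ on real one-forms so that $\langle df,\theta\rangle_\omega = g^{i\bar j}(\partial_i f\,\bar\tau_{\bar j} + \tau_i\,\partial_{\bar j}f)$, up to the factor fixed by the relation between $g$ and $g_{i\bar j}$. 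The plan is to compute $\Delta_g f$ in divergence form and compare it term by term with these expressions.

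First I will write the Riemannian Laplacian in complex coordinates using the Riemannian volume form. The Riemannian metric satisfies $g(\partial_i,\partial_{\bar j}) = g_{i\bar j}$ (up to the paper's factor convention), so $\det(g_{\mathbb R}) = c\,\det(g_{i\bar j})^2$. Hence the Riemannian volume density is a constant times $\det(g_{i\bar j}) = \omega^n/n!$ in coordinates. The coordinate divergence formula then gives
\[
\tfrac12\Delta_g f = \det(g)^{-1}\big[\partial_i(\det g\; g^{i\bar j}\partial_{\bar j}f) + \partial_{\bar j}(\det g\; g^{i\bar j}\partial_i f)\big],
\]
where $\det g = \det(g_{i\bar j})$. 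This works because the inverse real metric in the complex frame has only mixed components $g^{i\bar j}$.

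Next I will expand this expression. The second-order terms give $2g^{i\bar j}\partial_i\partial_{\bar j}f = 2\Delta_\omega f$. The first-order coefficient of $\partial_{\bar j}f$ is
\[
\partial_i g^{i\bar j} + g^{i\bar j}\partial_i\log\det g
= -g^{i\bar q}\partial_i g_{p\bar q}\,g^{p\bar j} + g^{i\bar j}g^{p\bar q}\partial_i g_{p\bar q}.
\]
After relabeling, this equals $g^{\ell\bar j}\big(g^{p\bar q}\partial_\ell g_{p\bar q} - g^{p\bar q}\partial_p g_{\ell\bar q}\big) = g^{\ell\bar j}\tau_\ell$. The $\partial_i f$ coefficient is its complex conjugate, $g^{i\bar m}\bar\tau_{\bar m}$. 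Therefore
\[
\tfrac12\Delta_g f = \Delta_\omega f + \tfrac12\big[g^{\ell\bar j}\tau_\ell\,\partial_{\bar j}f + g^{i\bar m}\bar\tau_{\bar m}\,\partial_i f\big].
\]
The bracket is precisely $\langle df,\tau+\bar\tau\rangle$ in the Hermitian pairing.

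The only genuine obstacle is bookkeeping of the normalization constants: the factor relating $g_{\mathbb R}$ to $g_{i\bar j}$, the factor in $\langle\cdot,\cdot\rangle_\omega$ on real one-forms, and the factor in the coordinate formula for $\tau$. These have to be matched so that the final coefficients come out as $2$ and $2$, consistent with the K\"ahler case $\Delta_g = 2\Delta_\omega$ and with the characterization $\tau\wedge\omega^{n-1} = \partial\omega^{n-1}$; here I would follow \cite[Lemma 3.1]{Sun26}. As a sanity check, I will confirm on a conformally flat example $g_{i\bar j} = e^{u}\delta_{ij}$ in dimension $2$. There the explicit formula gives $\tau = (n-1)\partial u = \partial u$, and the real Laplacian of $e^{u}g_{\mathrm{euc}}$ in real dimension $4$ produces the drift $e^{-u}\langle du,df\rangle_{\mathrm{euc}}$. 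This fixes the constants consistently. Finally, $\langle df,\theta\rangle_\omega = \langle\nabla f,\theta^\#\rangle_\omega$ holds by the definition of the musical isomorphism.
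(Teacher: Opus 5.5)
Your route (writing $\Delta_g$ in divergence form in holomorphic coordinates and reading off the first-order coefficient) is the standard one. The paper gives no proof of its own and defers to \cite[Lemma 3.1]{Sun26}, which is this same coordinate computation. Your key step is correct: the coefficient of $\partial_{\bar j}f$ is $\partial_i g^{i\bar j}+g^{i\bar j}\partial_i\log\det(g_{k\bar l})=g^{\ell\bar j}\tau_\ell$, and the coefficient of $\partial_i f$ is its conjugate.

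Two normalization points are wrong as written, and they are exactly where the constants $2$ and $2$ of the statement come from.

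First, take your convention $g(\partial_i,\partial_{\bar j})=g_{i\bar j}$, the one for which $\Delta_g=2\Delta_\omega$ on K\"ahler metrics. Under it, the expression $\det(g)^{-1}[\partial_i(\det g\,g^{i\bar j}\partial_{\bar j}f)+\partial_{\bar j}(\det g\,g^{i\bar j}\partial_i f)]$ equals $\Delta_g f$, not $\tfrac12\Delta_g f$. As you wrote it, its second-order part would give $\Delta_g=4\Delta_\omega$. Your final display, which amounts to $\Delta_g f=2\Delta_\omega f+[\cdots]$, is correct, but it does not follow from your intermediate display.

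Second, the bracket $g^{\ell\bar j}\tau_\ell\partial_{\bar j}f+g^{i\bar m}\bar\tau_{\bar m}\partial_i f$ is exactly the Riemannian pairing $\langle df,\theta\rangle_g$. So with the normalization you fixed at the outset, your computation proves $\Delta_g f=2\Delta_\omega f+\langle df,\theta\rangle_g$, with coefficient $1$ in front of the drift. Your conformal check confirms this rather than ``fixing the constants'' to $2$: there $\theta=du$ and the drift is $e^{-u}\langle du,df\rangle_{\mathrm{euc}}=\langle du,df\rangle_g$.

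The identity as stated holds precisely when $\langle df,\theta\rangle_\omega$ means $\mathrm{Re}\,(g^{i\bar j}\partial_i f\,\overline{\tau_j})=\tfrac12\langle df,\theta\rangle_g$, with $\langle\nabla f,\theta^\#\rangle_\omega$ normalized the same way. You should state this convention explicitly. Saying the constants ``have to be matched so that they come out as $2$ and $2$'' assumes the answer.

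None of this affects the rest of the paper. There the pairing enters only through Cauchy--Schwarz bounds up to uniform constants, or in \cref{lem:linear-drift-iteration}, where a different fixed constant can be absorbed into the definition of $h_{i+1}$.
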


\section{Laplacian estimates}\label{section 3}

We first recall the definition and normalization of the Green function. For the real
Laplacian $\Delta_{g_t}$, the Green function $G_t(x,y)$ is normalized by
\begin{equation}\label{eq:green-normalization}
\Delta_{g_t,y}G_t(x,y)\,\mu_t(y)
=
-\delta_x+\frac1{V_t}\mu_t(y),
\qquad
\int_XG_t(x,y)\,\mu_t(y)=0.
\end{equation}
Thus, for every $u\in C^2(X)$,
\begin{equation}\label{eq:real-green-formula}
u(x)
=
\frac1{V_t}\int_Xu\,\mu_t
-
\int_XG_t(x,y)\Delta_{g_t}u(y)\,\mu_t(y).
\end{equation}
We shall often write $G_z(y):=G_t(z,y)$.

Recall that we can choose a semi-positive representative $\chi$ of $K_X$ and set $\hat{\omega}_t:=\chi+e^{-t}(\omega_0-\chi)$, where $\omega_0$ is the initial metric for the normalized Chern-Ricci flow such that $\omega_0>\chi$. As in \cite{Sun26}, we may use \cite[Theorem 4.6]{GL22} to obtain that $\underline{\operatorname{Vol}}(\chi)>0$. Consequently, we easily have that the volumes $V_{\omega_t}:=\int_X\omega_t^n$ along the flow is uniformly bounded away from zero.

For this reason, we will define the $p$-Nash Entropy of $\omega_t$ by
$$
\operatorname{Ent}_p(\omega_t):=\int_X\left|\log\left(\frac{\omega_t^n}{\omega_X^n}\right)\right|^p\omega_t^n=\int_Xe^{F_t}|F_t|^p\omega_X^n,
$$
where $e^{F_t}:=\frac{\omega_t^n}{\omega_X^n}$. It then follows easily from \cref{lem:basic-flow-estimates} and the integrability of quasi-plurisubharmonic functions that $\operatorname{Ent}_p(\omega_t)$ remains uniformly bounded along the flow.

The following Laplacian estimate is taken from \cite[Lemma 4.1]{Sun26}. 

\begin{lemma}\label{lem:laplacian estimate 1}
Let $v\in L^1(X,\mu_t)$ and assume that
\[
v\in C^2(\overline{\Omega_0}),
\qquad
\Delta_{\omega_t}v\ge -a
\quad\text{on }\Omega_0,
\]
where $\Omega_s:=\{v>s\}$ for each $s\geq0$ and $a>0$ is a given constant. Then, there is a constant $C>0$ depending on $n,p,\chi,\underline{\operatorname{Vol}}(\chi),\omega_X,\operatorname{Ent}_p(\omega_t)$ such that
\[
\sup_Xv
\le
C\left(a+\int_Xv_+\,\mu_t\right).
\]
\end{lemma}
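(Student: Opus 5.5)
The plan is to follow the auxiliary Monge--Amp\`ere argument of Guo--Phong--Tong (as in \cite{GPT23,GPTW24}), adapted to the reference forms $\hat\omega_t$, and then finish with a De Giorgi iteration. By scaling $v\mapsto v/(a+\int_X v_+\mu_t)$ we may assume $a\le 1$ and $\int_X v_+\mu_t\le 1$. The goal is then $\sup_X v\le C$.

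\textbf{Step 1: auxiliary equation.} For each $s\ge0$ set $A_s:=\int_{\Omega_s}(v-s)\,\mu_t$. Let $\tau_k$ be smooth positive functions decreasing to $x\mapsto x_+$. Solve
\[
(\hat\omega_t+\sqrt{-1}\partial\bar\partial\psi_{s,k})^n=c_{s,k}\,\frac{\tau_k(v-s)}{A_{s,k}}\,e^{F_t}\,\omega_X^n,\qquad \sup_X\psi_{s,k}=0,
\]
where $A_{s,k}=\int_X\tau_k(v-s)\mu_t$ and $c_{s,k}$ is the normalizing constant. Since $\hat\omega_t=\chi+e^{-t}(\omega_0-\chi)$ with $\chi\ge0$ and $\underline{\operatorname{Vol}}(\chi)>0$, \cref{a priori in nef class for MA} applies with $\beta=\chi$. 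The extremal envelope is $\mathcal V_t=0$, because $0$ is a competitor. The hypotheses of that theorem are checked as follows.
\begin{itemize}
\item The $p$-entropy of the right-hand side is bounded by $\operatorname{Ent}_p(\omega_t)$ together with the elementary bound $x\log^p(1+x)$, applied to the normalized density $\tau_k(v-s)/A_{s,k}$. This is the standard GPT computation.
\item $\int e^{F/n}$ is bounded below using the lower bound on the volume.
\end{itemize}
We therefore get $|\psi_{s,k}|\le C$ and $c_{s,k}\ge c_0>0$.

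\textbf{Step 2: comparison.} Write $\omega_t=\hat\omega_t+\sqrt{-1}\partial\bar\partial\varphi_t$ with $|\varphi_t|\le C$ by \cref{lem:basic-flow-estimates}. Consider on $\Omega_s$ the function
\[
\Phi:=(v-s)-\varepsilon\,(-\psi_{s,k}+\varphi_t+\Lambda)^{\frac{n}{n+1}},\qquad \varepsilon=\Big(\tfrac{n+1}{n}\Big)^{\frac n{n+1}}\Big(\tfrac{A_{s,k}}{c_{s,k}}\Big)^{\frac1{n+1}},
\]
with $\Lambda$ large. The key point of using $\hat\omega_t$ as the reference form is the identity
\[
\Delta_{\omega_t}(\psi_{s,k}-\varphi_t)=\operatorname{tr}_{\omega_t}(\hat\omega_t+\sqrt{-1}\partial\bar\partial\psi_{s,k})-n,
\]
in which no uncontrolled $\operatorname{tr}_{\omega_t}\chi$ term appears. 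By the arithmetic--geometric mean inequality the trace is at least $n\big(c_{s,k}\tau_k(v-s)/A_{s,k}\big)^{1/n}$. At an interior maximum of $\Phi$ we use concavity of $x\mapsto x^{n/(n+1)}$ to discard the gradient term. We use $\Delta_{\omega_t}v\ge -1$ (recall $a\le 1$ after scaling) and absorb the $-1$ and the $-n$ terms by taking $\Lambda$ large. The outcome is $\Phi\le0$ at that point; on $\partial\Omega_s$ we have $\Phi\le0$ trivially. Letting $k\to\infty$ gives
\[
(v-s)\le C A_s^{1/(n+1)}\,(-\psi_s+C)^{n/(n+1)}\quad\text{on }\Omega_s .
\]

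\textbf{Step 3: iteration.} The $\hat\omega_t$-psh functions $\psi_s$ are uniformly bounded, so they have uniform $\alpha$-invariant type exponential integrability with respect to $\omega_X^n$. Hence, after H\"older with the entropy bound, with respect to $\mu_t$. Combined with Step 2 this gives
\[
\int_{\Omega_s}\exp\Big(\beta\,\frac{(v-s)^{(n+1)/n}}{A_s^{1/n}}\Big)\mu_t\le C .
\]
From this we derive, by Young's inequality, $A_s\le C\,A_s^{1/(n+1)}\mu_t(\Omega_s)^{1+\delta_0}$. Since $rA_{s+r}\le A_s$ bounds $\mu_t(\Omega_{s+r})$, this yields a De Giorgi inequality
\[
r\,\phi(s+r)\le C\,\phi(s)^{1+\delta},\qquad \phi(s):=\mu_t(\Omega_s).
\]
The lemma of De Giorgi then gives $\phi(s_\infty)=0$ for some $s_\infty\le C$, because $\phi(0)\le\int_X v_+\mu_t$ is uniformly bounded. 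This is the desired $\sup_X v\le C$. The dependence of $C$ is only through $n,p,\chi,\underline{\operatorname{Vol}}(\chi),\omega_X,\operatorname{Ent}_p(\omega_t)$.

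\textbf{Main obstacle.} The delicate point is Step 1. We must verify the uniform entropy bound for the auxiliary densities $\tau_k(v-s)e^{F_t}/A_{s,k}$, which is needed to invoke \cref{a priori in nef class for MA}. We must also check that the constant $c_{s,k}$ stays bounded below uniformly in $s$, $k$ and $t$. In the Hermitian setting $c_{s,k}$ is not given by a cohomological volume, so its lower bound has to come from $\underline{\operatorname{Vol}}(\chi)>0$. I would handle it as in \cite{Sun26}: the Monge--Amp\`ere mass of a bounded $\hat\omega_t$-psh function dominates the lower volume of $\{\chi\}$.
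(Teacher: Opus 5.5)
Your architecture is the Guo--Phong--Tong scheme behind \cite[Lemma 4.1]{Sun26}, which is what the paper invokes. It consists of scaling to $a\le1$ and $\int_Xv_+\mu_t\le1$, auxiliary Monge--Amp\`ere equations with densities $\tau_k(v-s)/A_{s,k}$ and reference form $\hat\omega_t$, comparison of $v-s$ with $\varepsilon(-\psi_{s,k}+\varphi_t+\Lambda)^{n/(n+1)}$ on $\Omega_s$, and De Giorgi iteration in $\phi(s)=\mu_t(\Omega_s)$.

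However, Step 1 fails as written. You cannot get $|\psi_{s,k}|\le C$ from \cref{a priori in nef class for MA}, because the auxiliary density $g_s:=\tau_k(v-s)/A_{s,k}$ has no uniform $p$-entropy bound. Entropy with respect to $\omega_X^n$ and with respect to $\mu_t$ are comparable here, since $e^{F_t}$ is bounded above and below. We have $\int_Xg_s\mu_t=1$, and for $k$ large at least half of this mass lies on $\Omega_s$. Since $x\mapsto x\log^p(1+x)$ is convex, Jensen's inequality on $\Omega_s$ gives
\[
\int_X g_s\log^p(1+g_s)\,\mu_t\ \ge\ \frac12\log^p\Bigl(1+\frac{1}{2\mu_t(\Omega_s)}\Bigr),
\]
which blows up as $s\uparrow\sup_Xv$. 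Even at $s=0$, a bound would require $v_+\in L\log^pL$. That is not a hypothesis, and it is essentially what you are proving. A symptom of the problem: if $|\psi_{s,k}|\le C$ held, Step 2 would already give $v-s\le CA_s^{1/(n+1)}$ pointwise and Step 3 would be pointless. For the same reason, you cannot base the lower bound on $c_{s,k}$ on boundedness of $\psi_{s,k}$.

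The repair is the standard one, and your later steps need nothing more.
\begin{itemize}
\item In Step 2 you only use $\sup_X\psi_{s,k}=0$ and $|\varphi_t|\le C$, which give $-\psi_{s,k}+\varphi_t+\Lambda\ge\Lambda-C$.
\item In Step 3, replace boundedness by the uniform $\alpha$-invariant estimate $\int_Xe^{-\alpha\psi}\omega_X^n\le C$. It holds for every $\hat\omega_t$-psh $\psi$ with $\sup_X\psi=0$, because $\hat\omega_t\le C\omega_X$, regardless of the right-hand side of the auxiliary equation.
\item For $c_{s,k}$, an $L^1$ bound on $\psi_{s,k}$ suffices. For example, in dimension two, since $\chi$ is closed, $c_{s,k}=\int_X\hat\omega_t^2+2e^{-t}\int_X\psi_{s,k}\sqrt{-1}\partial\bar\partial\omega_0\ge\frac12\int_X\chi^2>0$ for $t$ large.
\end{itemize}

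Two smaller corrections.
\begin{itemize}
\item In Step 2, the $-a$ term is not absorbed by taking $\Lambda$ large, since it carries no $\Lambda$-dependent factor. It is dominated because your $\varepsilon$ makes the good term at least $n$, and $n=2>1\ge a$ here; alternatively, enlarge the constant in $\varepsilon$.
\item In Step 3, passing from $\omega_X^n$ to $\mu_t$ under only an entropy bound is not a H\"older step. It is done by a generalized Young inequality, which gives $\int_{\Omega_s}\bigl((v-s)^{(n+1)/n}A_s^{-1/n}\bigr)^p\mu_t\le C$. H\"older then gives $A_s\le C\phi(s)^{1+1/n-1/p}$, and $p>n$ supplies the exponent gain. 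In the present setting this is harmless anyway, since $\mu_t\le C\omega_X^n$.
\end{itemize}
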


\begin{remark}
In the proof of \cite[Lemma 4.1]{Sun26}, the normalization $\|v\|_{L^1}\le1$ is used only to control the integral
\[
A_s=\int_{\Omega_s}(v-s)\,\mu_t.
\]
The same argument works verbatim if this is replaced by
$\int_Xv_+\,\mu_t\le1$ and the condition $\int_Xv\mu_t=0$ is not needed.
\end{remark}

In dimension $2$, we have the following surface torsion estimates:
\begin{lemma}\label{lem:surface-torsion-estimates}
Along the normalized Chern-Ricci flow, we have $|\theta_t|_{\omega_t}^2\le Ce^{-2t}\operatorname{tr}_{\omega_t}\omega_0$. Since $\chi$ is also semi-positive, we have $|\theta_t|_{\omega_t}^2\le Ce^{-t}\operatorname{tr}_{\omega_t}\hat\omega_t$. Consequently,
\[
\int_X|\theta_t|_{\omega_t}^2\,\mu_t\le Ce^{-2t}.
\]
\end{lemma}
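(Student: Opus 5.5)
We prove this by a pointwise computation in a frame adapted to both $\omega_0$ and $\omega_t$, using that the torsion of $\omega_t$ comes entirely from $\omega_0$. Since $\chi$ is closed and $\omega_t=\hat\omega_t+\sqrt{-1}\partial\bar\partial\varphi_t$, we have $\partial\omega_t=e^{-t}\partial\omega_0$. In dimension $n=2$ the characterization $\tau\wedge\omega^{n-1}=\partial\omega^{n-1}$ reads $\tau_t\wedge\omega_t=\partial\omega_t=e^{-t}\partial\omega_0$. So $\tau_t$ is the unique $(1,0)$-form with $\tau_t\wedge\omega_t=e^{-t}\partial\omega_0$. Wedging with a $(1,0)$-form on a surface is injective, by the Lefschetz isomorphism $\Lambda^{1}\to\Lambda^{3}$.

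Fix a point and choose coordinates with $\omega_0=\sqrt{-1}\sum dz^i\wedge d\bar z^i$ and $\omega_t=\sqrt{-1}\sum\lambda_i dz^i\wedge d\bar z^i$ there. Write
\[
\partial\omega_0=\sqrt{-1}\sum_{\ell,j}A_{\ell j\bar j}\,dz^\ell\wedge dz^j\wedge d\bar z^j,
\]
which includes only terms with $\ell\neq j$. In dimension two, $\partial\omega_0$ has just two components, $A_{1 2\bar 2}$ and $A_{2 1\bar 1}$, both bounded by $C$ since $\omega_0$ is fixed. Write $\tau_t=a_1dz^1+a_2dz^2$. Then
\[
\tau_t\wedge\omega_t=\sqrt{-1}\bigl(a_1\lambda_2\,dz^1\wedge dz^2\wedge d\bar z^2+a_2\lambda_1\,dz^2\wedge dz^1\wedge d\bar z^1\bigr).
\]
Matching with $e^{-t}\partial\omega_0$ gives $a_1=e^{-t}A_{12\bar2}/\lambda_2$ and $a_2=e^{-t}A_{21\bar1}/\lambda_1$, up to sign conventions. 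Therefore
\[
|\theta_t|^2_{\omega_t}\le C\sum_i\frac{|a_i|^2}{\lambda_i}\le Ce^{-2t}\Bigl(\frac{1}{\lambda_1\lambda_2^2}+\frac{1}{\lambda_2\lambda_1^2}\Bigr)=\frac{Ce^{-2t}}{\lambda_1\lambda_2}\operatorname{tr}_{\omega_t}\omega_0.
\]
By \cref{lem:basic-flow-estimates}, $\lambda_1\lambda_2=\omega_t^2/\omega_0^2\ge C^{-1}$. This gives the first inequality.

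For the second inequality, use $\chi\ge0$. Then $\hat\omega_t=(1-e^{-t})\chi+e^{-t}\omega_0\ge e^{-t}\omega_0$, so $\operatorname{tr}_{\omega_t}\omega_0\le e^{t}\operatorname{tr}_{\omega_t}\hat\omega_t$. Substituting into the first inequality gives $|\theta_t|^2_{\omega_t}\le Ce^{-t}\operatorname{tr}_{\omega_t}\hat\omega_t$.

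For the integral, the first bound is the useful one. On a surface,
\[
\operatorname{tr}_{\omega_t}\omega_0\,\omega_t^2=2\,\omega_0\wedge\omega_t=2\,\omega_0\wedge\hat\omega_t+2\,\omega_0\wedge\sqrt{-1}\partial\bar\partial\varphi_t.
\]
The first term integrates to a uniformly bounded quantity. For the second, integrate by parts to get $\int_X\varphi_t\sqrt{-1}\partial\bar\partial\omega_0$. This is bounded by $C\sup|\varphi_t|\le C$, using \cref{lem:basic-flow-estimates}. Hence $\int_X\operatorname{tr}_{\omega_t}\omega_0\,\mu_t\le C$, and so $\int_X|\theta_t|^2_{\omega_t}\,\mu_t\le Ce^{-2t}$.

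The step needing the most care is the first one: solving $\tau_t\wedge\omega_t=e^{-t}\partial\omega_0$ exactly in the diagonal frame. This relies on $n=2$, where $\partial\omega$ determines $\tau$ with no primitive part. It also requires getting the powers of $\lambda_i$ right, so that exactly one factor $\lambda_1\lambda_2$ is absorbed by the volume lower bound. The remaining steps are bookkeeping.
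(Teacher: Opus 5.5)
Your proposal is correct and follows essentially the same route as the paper: the reduction $\partial\omega_t=e^{-t}\partial\omega_0$, the computation in an $\omega_0$-unitary frame diagonalizing $\omega_t$ giving $|\tau_t|^2_{\omega_t}\le Ce^{-2t}(\lambda_1^{-1}\lambda_2^{-2}+\lambda_1^{-2}\lambda_2^{-1})$, absorbing one factor of $\lambda_1\lambda_2$ via the volume lower bound, and bounding $\int_X\omega_0\wedge\omega_t$ by moving $\sqrt{-1}\partial\bar\partial$ onto $\omega_0$. Solving $\tau_t\wedge\omega_t=e^{-t}\partial\omega_0$ rather than contracting with $\Lambda_{\omega_t}$ is only a cosmetic difference, and your factor $2$ in $\operatorname{tr}_{\omega_t}\omega_0\,\omega_t^2=2\,\omega_0\wedge\omega_t$ is the correct one.
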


\begin{proof}
Since $\partial\chi=0$ and $\partial(\sqrt{-1}\partial\bar\partial\varphi_t)=0$, we have
\[
\partial\omega_t=e^{-t}\partial\omega_0,\qquad
\tau_t=e^{-t}\Lambda_{\omega_t}(\partial\omega_0).
\]
At a fixed point, choose an $\omega_0$-unitary frame such that
\[
\omega_t=\sqrt{-1}\lambda_1dz^1\wedge d\bar z^1
+\sqrt{-1}\lambda_2dz^2\wedge d\bar z^2.
\]
Writing $A^0_{ij\bar k}=\partial_i(g_0)_{j\bar k}-\partial_j(g_0)_{i\bar k}$, we get
\[
(\tau_t)_1=e^{-t}\lambda_2^{-1}A^0_{12\bar2},\qquad
(\tau_t)_2=-e^{-t}\lambda_1^{-1}A^0_{12\bar1}.
\]
Thus
\[
|\tau_t|_{\omega_t}^2
\le
Ce^{-2t}\left(\lambda_1^{-1}\lambda_2^{-2}
+\lambda_2^{-1}\lambda_1^{-2}\right).
\]
By the uniform volume equivalence \cref{lem:basic-flow-estimates}, we have
$C^{-1}\le\lambda_1\lambda_2\le C$. Hence
\[
\lambda_1^{-1}\lambda_2^{-2}\le C\lambda_2^{-1},\qquad
\lambda_2^{-1}\lambda_1^{-2}\le C\lambda_1^{-1},
\]
and therefore
\[
|\tau_t|_{\omega_t}^2\le Ce^{-2t}(\lambda_1^{-1}+\lambda_2^{-1})
=Ce^{-2t}\operatorname{tr}_{\omega_t}\omega_0.
\]
Since $|\theta_t|_{\omega_t}^2\le C|\tau_t|_{\omega_t}^2$, the pointwise estimate follows.

Integrating it gives
\[
\int_X|\theta_t|_{\omega_t}^2\,\mu_t
\le
Ce^{-2t}\int_X\operatorname{tr}_{\omega_t}\omega_0\,\omega_t^2
=
Ce^{-2t}\int_X\omega_0\wedge\omega_t.
\]
The last integral is uniformly bounded, because
\[
\int_X\omega_0\wedge\omega_t
=
\int_X\omega_0\wedge\hat\omega_t
+
\int_X\varphi_t\,\sqrt{-1}\partial\bar\partial\omega_0
\le C,
\]
using the uniform boundedness of $\varphi_t$.

Finally, if $\chi\ge0$, then $\hat\omega_t=e^{-t}\omega_0+(1-e^{-t})\chi\ge e^{-t}\omega_0$, hence
\[
\operatorname{tr}_{\omega_t}\omega_0\le e^t\operatorname{tr}_{\omega_t}\hat\omega_t.
\]
The final estimate follows.
\end{proof}

Using \cref{lem:surface-torsion-estimates}, the proof of \cite[Lemma 4.2]{Sun26} carries over easily to give the following $L^1$-estimate of Laplacian solutions. We record the proof here for the reader's convenience.
\begin{lemma}\label{Laplacian estimate 2}
    Suppose that $v\in C^2(X)$ satisfies 
    $$
|\Delta_{\omega_t}v|\leq 1,\quad \int_Xv\omega_t^n=0.
    $$
    Then, there exists a uniform constant $C=C(n,p,\chi,\underline{\operatorname{Vol}}(\chi),\omega_X,\operatorname{Ent}_p(\omega_t))$ such that
$$
\int_X|v|\omega_t^n\leq C.
$$
\end{lemma}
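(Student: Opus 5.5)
We reduce to the one-sided estimate of \cref{lem:laplacian estimate 1}, applied to $v$ and to $-v$. Since $|\Delta_{\omega_t}v|\le1$, both $v$ and $-v$ satisfy $\Delta_{\omega_t}(\pm v)\ge-1$ on all of $X$. That lemma then gives
\[
\sup_X(\pm v)\le C\Big(1+\int_X(\pm v)_+\,\mu_t\Big).
\]
This alone is circular, because the right-hand side involves the $L^1$ norm we want to bound. So, as in \cite[Lemma 4.2]{Sun26}, we argue by contradiction with a normalization.

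Suppose there are times $t_k$ and functions $v_k$ satisfying the hypotheses with $N_k:=\int_X|v_k|\mu_{t_k}\to\infty$. Set $w_k:=v_k/N_k$. Then
\[
\int_X|w_k|\,\mu_{t_k}=1,\qquad \int_X w_k\,\mu_{t_k}=0,\qquad |\Delta_{\omega_{t_k}}w_k|\le N_k^{-1}\to0.
\]
Applying \cref{lem:laplacian estimate 1} to $\pm w_k$ gives the uniform bound $\sup_X|w_k|\le C$, since $\int_X(w_k)_\pm\,\mu_{t_k}=\tfrac12$.

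Next we extract an energy bound. Multiply $\Delta_{\omega_t}w_k$ by $w_k$ and integrate against $\omega_t^2$. By \cref{lem:real-complex-laplacian}, $\Delta_{\omega_t}=\tfrac12\Delta_{g_t}-\langle\nabla\cdot,\theta_t^\#\rangle$, so
\[
\tfrac12\int_X|\nabla w_k|^2\mu_t=-\int_X w_k\Delta_{\omega_t}w_k\,\mu_t-\int_X w_k\langle\nabla w_k,\theta_t^\#\rangle\,\mu_t .
\]
The first term on the right is $O(N_k^{-1})$ because $w_k$ is bounded. The second is bounded by
\[
C\Big(\int|\nabla w_k|^2\Big)^{1/2}\Big(\int|\theta_t|^2\Big)^{1/2}\le Ce^{-t_k}\|\nabla w_k\|_{L^2},
\]
using \cref{lem:surface-torsion-estimates}. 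Absorbing this term gives $\int_X|\nabla w_k|^2\mu_{t_k}\le C(N_k^{-1}+e^{-2t_k})$, and in particular the energy tends to $0$ if $t_k\to\infty$.

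The main obstacle is turning small energy and zero mean into a contradiction with $\int|w_k|=1$. This needs a uniform Poincaré-type inequality along degenerating metrics, which is not available directly. I would instead use the Green formula \eqref{eq:real-green-formula} with the uniform $L^\infty$ bound:
\[
w_k(x)=-\int_X G_x\,\Delta_{g_t}w_k\,\mu_t=-2\int_X G_x\,\Delta_{\omega_t}w_k\,\mu_t-2\int_X G_x\langle\nabla w_k,\theta^\#\rangle\,\mu_t .
\]
However, this needs Green $L^1$ bounds, which are proved later. So I would follow the \cite{Sun26} route instead and apply \cref{lem:laplacian estimate 1} to $w_k-s$ on superlevel sets. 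Uniform boundedness together with the $L^1$ normalization forces $\sup w_k\ge c>0$ and $\inf w_k\le -c$. Combined with $|\Delta_{\omega_t}w_k|\to0$, one then obtains a contradiction via an auxiliary Monge–Ampère comparison, as in \cite[Lemma 4.2]{Sun26}, where the torsion terms are controlled by the vanishing energy.

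If $t_k$ stays bounded, the metrics are uniformly equivalent to $\omega_0$. In that case the standard Poincaré inequality and elliptic theory give the bound directly.
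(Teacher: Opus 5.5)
Your normalization, the uniform bound $\sup_X|w_k|\le C$ from \cref{lem:laplacian estimate 1}, and the vanishing energy $\int_X|\nabla w_k|^2\mu_{t_k}\to0$ from the torsion estimate all agree with the paper's proof. The gap is the final step, which is the actual content of the lemma.

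The properties you collect ($\sup w_k\ge c$, $\inf w_k\le -c$, $|\Delta_{\omega_{t_k}}w_k|\to0$, small energy, zero mean) do not by themselves contradict $\int_X|w_k|\mu_{t_k}=1$. Suitably normalized first eigenfunctions on a Riemannian family developing a thin neck have all of them. So you need some input about \emph{where and how} $\omega_t$ degenerates, and ``an auxiliary Monge--Amp\`ere comparison'' does not supply it. You never say which inequality would be violated. In the paper no Monge--Amp\`ere equation is used at this point; it enters only through \cref{lem:laplacian estimate 1}. The final step is a blow-up/compactness argument in the spirit of Guo--Phong--Song--Sturm.

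The missing idea is to localize away from $E=\text{Null}(K_X)$ and use that the degenerate region carries little mass.

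Choose neighborhoods $U_\varepsilon\Subset U_{2\varepsilon}$ of $E$ with $\omega_X$-volume at most $2\varepsilon$ and $X\setminus U_{2\varepsilon}$ connected. This is possible because $E$ is a proper analytic subset, so $X\setminus E$ is connected.

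By \cref{lem:basic-flow-estimates}, $\omega_{t_k}\to\omega_{KE}$ smoothly on $X\setminus U_\varepsilon$. There the metrics are uniformly equivalent to $\omega_X$, so $\int_{X\setminus U_\varepsilon}|\nabla w_k|^2_{\omega_X}\omega_X^2\to0$. Rellich compactness gives a subsequence with $w_k\to w_\infty$ in $L^q(X\setminus U_{2\varepsilon})$. The vanishing gradient together with connectedness forces $w_\infty\equiv c$ there.

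On the other hand, the uniform $L^\infty$ bound and $\mu_t\le C\omega_X^2$ give $\int_{U_{2\varepsilon}}|w_k|\,\mu_{t_k}\le C\varepsilon$.

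If $c\ge0$, then $w_k^-\to0$ on $X\setminus U_{2\varepsilon}$. Hence $1=\int_X|w_k|\mu_{t_k}=2\int_Xw_k^-\mu_{t_k}\le C\varepsilon+o(1)$, which is false for $\varepsilon$ small. The case $c<0$ is symmetric with $w_k^+$.

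This is how the uniform $L^\infty$ bound substitutes for the uniform Poincar\'e inequality that you correctly noted is unavailable.
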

\begin{proof}
       We will mainly follow the blow-up arguments in \cite[Lemma 5.3]{GPSS24a}. Suppose that there is a sequence of metrics $\omega_{t_j}$ along the CRF (with $t_j\to+\infty$) and $v_j\in C^2(X)$ such that 
    $$
\Delta_{\omega_{t_j}}v_j=h_j,\quad\int_Xv_j\omega_{t_j}^2=0,
    $$
    where $h_j$ are continuous functions satisfying $|h_j|\leq1$, and we have
    $$
\int_X|v_j|\omega_{t_j}^2:=N_j\to+\infty,
    $$
    as $j\to+\infty$. Set $\hat{v}_j:=\frac{v_j}{N_j}$, then we have
    \[
|\Delta_{\omega_{t_j}}\hat{v}_j|=\frac{|h_j|}{N_j}\to0,\quad\int_X|\hat{v}_j|\omega_{t_j}^2=1.
    \]
\cref{lem:laplacian estimate 1} then yields a uniform constant $C$ such that
\begin{equation}\label{eq:sup vj}
\sup_X|\hat{v}_j|\leq C.
\end{equation}
Let $\tau_j:=\Lambda_{\omega_{t_j}}(\partial{\omega_{t_j}})=e^{-{t_j}}\Lambda_{\omega_{t_j}}(\partial{\omega_{0}})$ be the torsion $(1,0)$-form of $\omega_{t_j}$. Using \cref{lem:surface-torsion-estimates}, we can write 
\begin{align*}
    \int_X|\nabla\hat{v}_j|^2_{\omega_{t_j}}\omega_{t_j}^2&\leq C\int_X\sqrt{-1}\partial\hat{v}_j\wedge\bar{\partial}\hat{v}_j\wedge\omega_{t_j}\\
    &=\frac{C}{n}\int_X(-\hat{v}_j)(\Delta_{\omega_{t_j}}\hat{v}_j)\omega_{t_j}^2+C\int_X(-\hat{v}_j)\bar{\partial}\hat{v}_j\wedge\partial\omega_{t_j}\\
    &\leq C\int_X|\hat{v}_j|\left|\frac{h_j}{N_j}\right|\omega_{t_j}^2+C\int_X(-\hat{v}_j)\bar{\partial}\hat{v}_j\wedge\tau_j\wedge\omega_{t_j}\\
    &\leq C\int_X|\hat{v}_j|\left|\frac{h_j}{N_j}\right|\omega_{t_j}^2+C\int_X|\nabla\hat{v}_j|_{\omega_{t_j}}|\tau_j|_{\omega_{t_j}}\omega_{t_j}^2\\
    &\leq C\int_X|\hat{v}_j|\left|\frac{h_j}{N_j}\right|\omega_{t_j}^2+C\left(\int_X|\nabla\hat{v}_j|^2_{\omega_{t_j}}\omega_{t_j}^2\right)^{\frac{1}{2}}\left(\int_X|\tau_j|_{\omega_{t_j}}^2\omega_{t_j}^2\right)^\frac{1}{2},
\end{align*}
where in the third inequality we have used \eqref{eq:sup vj}. Since $\left(\int_X|\tau_j|_{\omega_{t_j}}^2\omega_{t_j}^2\right)^\frac{1}{2}=O(e^{-t_j})$ and $\left|\frac{h_j}{N_j}\right|\to0$ uniformly, we conclude that 
\begin{equation}\label{eq:L2 of nabla v_j}
\underset{j\to\infty}{\lim}  \int_X|\nabla\hat{v}_j|^2_{\omega_{t_j}}\omega_{t_j}^2\to0.
\end{equation}
Since $X$ is a connected complex manifold and $E=\text{Null}(K_X)$ is a proper analytic subvariety, it is well-known that $E$ is automatically thin in $X$ and hence $X\setminus E$ is connected. For each $\varepsilon>0$, we can thus find small connected neighborhoods $U_\varepsilon\Subset U_{2\varepsilon}$ with smooth boundaries such that $X\setminus U_\varepsilon,X\setminus U_{2\varepsilon}$ are connected and that $|U_\varepsilon|<\varepsilon,|U_{2\varepsilon}|<2\varepsilon$ (here we use $|\cdot|$ to denote the volume of a set with respect to $\omega_X^2$).

Now Since $\omega_{t_j}$ are uniformly equivalent outside $U_\varepsilon$, we conclude from \eqref{eq:L2 of nabla v_j} that
\begin{equation}\label{eq:L2 limit of vj 2}
    \underset{j\to\infty}{\lim}\int_{X\setminus U_{\varepsilon}}|\nabla\hat{v}_j|^2_{\omega_X}\omega_X^2=0.
\end{equation}
It follows that the sequence $\hat{v}_j$ is uniformly bounded in the sobolev space $W^{1,1}(X\setminus U_{\varepsilon},\omega_X^2)$ and hence we can use sobolev's embedding theorem to extract a converging subsequence (still denoted by $\hat{v}_j$) such that $\hat{v}_j\to v_\infty$ in $L^q(X\setminus U_{2\varepsilon},\omega_X^2)$ for some $q>1$. By \eqref{eq:sup vj} we have that $\hat{v}_j$ is uniformly bounded, thus so does $v_\infty$. Recall that we have smooth convergence $\omega_{t_j}\to\omega_{KE}$ on $X\setminus U_{2\varepsilon}$, this clearly yields uniform convergence of the coefficients of the formal adjoint operators $\Delta_{\omega_{t_j}}^*\to\Delta_{\omega_{KE}}^*$ on $X\setminus U_{2\varepsilon}$. Consequently, we easily have the convergence 
$$\frac{h_j}{N_j}=\Delta_{\omega_{t_j}}\hat{v}_j\to\Delta_{\omega_{KE}}v_\infty$$
on $X\setminus U_{2\varepsilon}$ in the distributional sense. Therefore, we have $\Delta_{\omega_{KE}}v_\infty=0$ on $X\setminus U_{2\varepsilon}$ and hence $v_\infty$ is smooth on $X\setminus U_{2\varepsilon}$ by standard elliptic regularity theory.

Now we claim that $v_\infty$ is constant in $X\setminus U_{2\varepsilon}$. For each smooth vector field $Y\in C_c^{\infty}(X\setminus U_{2\varepsilon})$, we use the divergence theorem for the underlying Riemannian manifold to write
\begin{align*}
    \int_X\langle\nabla v_\infty,Y\rangle_{\omega_X}\omega_X^2&=-\int_Xv_\infty\cdot\operatorname{div}_{\omega_X}Y\omega_X^2=-\underset{j\to\infty}{\lim}\int_X\hat{v}_j\operatorname{div}_{\omega_X}Y\omega_X^2\\
  &  =\underset{j\to\infty}{\lim} \int_X\langle\nabla \hat{v}_j,Y\rangle_{\omega_X}\omega_X^2\leq C\underset{j\to\infty}{\lim} \int_{X\setminus U_{\varepsilon}}|\nabla\hat{v}_j|_{\omega_X}\omega_X^2=0.
\end{align*}
Here, we have used \eqref{eq:L2 limit of vj 2} in the last equality. This immediately yields that $\nabla v_\infty\equiv0$ on $X\setminus U_{2\varepsilon}$, whence the claim follows.

It remains to establish a contradiction. Write $\hat{v}_j=\hat{v}_j^+-\hat{v}_j^-$ and set $c=v_\infty$ on $X\setminus U_{2\varepsilon}$.

\textbf{Case 1.} Suppose first $c\geq0$. Since $\int_X\hat{v}_j\omega_{t_j}^2=0$, we have $\int_X\hat{v}_j^+\omega_{t_j}^2=\int_X\hat{v}_j^-\omega_{t_j}^2$. By \eqref{eq:sup vj} and \cref{lem:basic-flow-estimates} we have
$$
\int_{U_{2\varepsilon}}\hat{v}_j^{-}\omega_{t_j}^2\leq C|U_{2\varepsilon}|\leq 2C\varepsilon.
$$
Since $c\geq0$, $\hat{v}_j^{-}:=-\min(\hat{v}_j,0)\to0$ on $X\setminus U_{2\varepsilon}$. The dominated convergence theorem then yields that
$$
\underset{j\to\infty}{\lim}\int_{X\setminus U_{2\varepsilon}}\hat{v}_j^{-}\omega_{t_j}^2=0.
$$
We now choose $\varepsilon$ so small such that $2C\varepsilon<\frac{1}{4}$, then for $j$ large
$$
1=\int_X|\hat{v}_j|\omega_{t_j}^2=\int_X(\hat{v}_j^-+\hat{v}_j^+)\omega_{t_j}^2=2\int_X\hat{v}_j^-\omega_{t_j}^2=2\int_{U_{2\varepsilon}}\hat{v}_j^-\omega_{t_j}^2+2\int_{{X\setminus U_{2\varepsilon}}}\hat{v}_j^-\omega_{t_j}^2<\frac{1}{2},
$$
this gives a contradiction.

\textbf{Case 2.} For the case $c<0$, the argument is exactly the same as in \textbf{Case 1} by considering $\hat{v}_j^+$, the proof is therefore concluded.
\end{proof}

Combining this with \cite[Corollary 4.1]{Sun26} (the local K\"ahler assumption there is superfluous), we get
\begin{corollary}\label{cor:Lap estimate final}
    Suppose that $v\in C^2(X)$ satisfies 
    $$
|\Delta_{\omega_t}v|\leq 1,\quad \int_Xv\omega_t^n=0.
    $$
    Then, there exists a uniform constant $C=C(n,p,\chi,\underline{\operatorname{Vol}}(\chi),\omega_X,\operatorname{Ent}_p(\omega_t))$ such that
    $$
\|v\|_{L^\infty}\le C.
    $$
\end{corollary}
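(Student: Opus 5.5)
The plan is to reduce the $L^\infty$ bound to the $L^1$ bound of \cref{Laplacian estimate 2}, by feeding it into the mean-value-type estimate \cref{lem:laplacian estimate 1}. This is exactly how \cite[Corollary 4.1]{Sun26} is deduced from \cite[Lemma 4.2]{Sun26}. We only need to check that the local K\"ahler assumption never enters that deduction, since the only torsion-sensitive input has already been replaced by \cref{lem:surface-torsion-estimates} inside the proof of \cref{Laplacian estimate 2}.

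\textbf{Step 1.} Let $v\in C^2(X)$ with $|\Delta_{\omega_t}v|\le1$ and $\int_Xv\,\omega_t^n=0$. By \cref{Laplacian estimate 2},
\[
\int_X|v|\,\mu_t\le C_1 ,
\]
with $C_1$ uniform in $t$.

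\textbf{Step 2.} Apply \cref{lem:laplacian estimate 1} to $v$ with $a=1$. The hypothesis $\Delta_{\omega_t}v\ge-1$ holds on all of $X$, hence on $\Omega_0=\{v>0\}$, and $v\in C^2(\overline{\Omega_0})$. By the accompanying remark, \cref{lem:laplacian estimate 1} holds for general $\int_Xv_+\mu_t$, without the normalization to $1$ and without requiring zero mean. It gives
\[
\sup_Xv\le C\Bigl(1+\int_Xv_+\,\mu_t\Bigr)\le C(1+C_1).
\]

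\textbf{Step 3.} Apply the same argument to $-v$, which satisfies $\Delta_{\omega_t}(-v)\ge-1$ and $\int_X(-v)_+\mu_t\le C_1$. This gives $\sup_X(-v)\le C(1+C_1)$. Combining the two bounds yields $\|v\|_{L^\infty}\le C$, with $C$ depending only on the listed data. These are the data of \cref{lem:laplacian estimate 1}, namely $n,p,\chi,\underline{\operatorname{Vol}}(\chi),\omega_X$ and $\operatorname{Ent}_p(\omega_t)$, the last being uniformly bounded along the flow.

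The only potential obstacle is confirming that \cref{lem:laplacian estimate 1} is genuinely torsion-free. Its proof in \cite[Lemma 4.1]{Sun26} works with the Chern Laplacian $\Delta_{\omega_t}$ through auxiliary complex Monge--Amp\`ere equations on the sublevel sets $\Omega_s$, together with \cref{a priori in nef class for MA}. The comparison there uses $\Delta_{\omega_t}$ and the maximum principle, never the real Laplacian or an integration by parts, so no torsion term appears. I would check this point first. Granting it, the corollary follows directly from Steps 1--3.
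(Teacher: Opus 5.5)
Your proposal is correct and takes essentially the paper's route. The paper proves the corollary by combining the $L^1$ bound of \cref{Laplacian estimate 2} with \cite[Corollary 4.1]{Sun26}, which amounts to applying the sup estimate \cref{lem:laplacian estimate 1} to $v$ and to $-v$, and your check that this sup estimate uses only the Chern Laplacian and the maximum principle matches the paper's remark that the local K\"ahler assumption is superfluous there.
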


\section{Green function estimates on Hermitian surfaces}\label{section 4}

\subsection{$L^1$ estimates of the Green function}

We begin with a distributional estimate that the total variation norm (denote by $\|\cdot\|_{\mathcal{M}}$) of the real Laplacian of Green functions is uniformly bounded with respect to $t$, closely related to the Kato-type inequalities of Brezis--Ponce \cite{BP04}. 

\begin{lemma}\label{lem:laplacian-abs-green}
Let $G=G_t(x,\cdot)$ be the real Green function and set $G_+=\max\{G,0\}$, $G_-=\max\{-G,0\}$.
Then, in the sense of distributions,
\[
\Delta_{g_t}G_+\,\mu_t
=
-\delta_x+\frac1{V_t}\mathbf 1_{\{G>0\}}\mu_t+\nu_+,
\]
and
\[
\Delta_{g_t}G_-\,\mu_t
=
-\frac1{V_t}\mathbf 1_{\{G<0\}}\mu_t+\nu_-,
\]
where $\nu_+,\nu_-$ are nonnegative Radon measures satisfying $\nu_+(X)\le1$ and $\nu_-(X)\le1$.
Consequently, we have the Laplacian bounds of $|G|$ under the Total variation norm:
\[
\|\Delta_{g_t}|G|\mu_t\|_{TV}\le C.
\]
\end{lemma}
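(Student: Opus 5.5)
We prove the identities for $G_+$ and $G_-$ by a Kato-type regularization, then bound the total variation using the total masses. Throughout, $t$ is fixed. $G=G_t(x,\cdot)$ is smooth on $X\setminus\{x\}$ and solves $\Delta_{g_t}G=V_t^{-1}$ there. Near $x$ it behaves like $c\,d_{g_t}(x,\cdot)^{-2}$ (real dimension $4$), so $G\to+\infty$ at $x$ and $G$ is bounded below. In particular $G_-$ vanishes near $x$, and $G_+=G$ near $x$.

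\textbf{Step 1: regularize the positive part.} Fix a smooth convex nondecreasing $\phi_\varepsilon:\mathbb R\to\mathbb R$ with $\phi_\varepsilon(s)=0$ for $s\le0$, $\phi_\varepsilon(s)=s-\varepsilon/2$ for $s\ge\varepsilon$, and $0\le\phi_\varepsilon'\le1$. Then $\phi_\varepsilon\circ G$ equals $G-\varepsilon/2$ near $x$. Hence, as distributions,
\[
\Delta_{g_t}(\phi_\varepsilon\circ G)\,\mu_t=-\delta_x+\phi_\varepsilon'(G)V_t^{-1}\mu_t+\phi_\varepsilon''(G)|\nabla G|^2_{g_t}\mu_t .
\]
Write $\nu_\varepsilon:=\phi_\varepsilon''(G)|\nabla G|^2\mu_t\ge0$. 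Pairing the identity with the constant test function $1$ gives $\int_X\Delta_{g_t}(\cdot)\mu_t=0$. The real Laplacian is the Riemannian Laplacian of $g_t$ and $\mu_t$ is a fixed multiple of its Riemannian volume form (dimension $2$), so this integration by parts holds with no torsion term. We get
\[
\nu_\varepsilon(X)=1-V_t^{-1}\int_X\phi_\varepsilon'(G)\mu_t\le1 .
\]

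\textbf{Step 2: pass to the limit.} As $\varepsilon\to0$, $\phi_\varepsilon\circ G\to G_+$ in $L^1$ and $\phi_\varepsilon'(G)\to\mathbf 1_{\{G>0\}}$ pointwise and boundedly. The measures $\nu_\varepsilon$ have uniformly bounded mass, so a subsequence converges weakly-* to some $\nu_+\ge0$ with $\nu_+(X)\le1$. Passing to the limit in the distributional identity gives the formula for $G_+$.

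\textbf{Step 3: the negative part.} The same argument with $\phi_\varepsilon(-G)$ works; there is no delta term because $G_-\equiv0$ near $x$. Here
\[
\Delta_{g_t}(\phi_\varepsilon(-G))\mu_t=-\phi_\varepsilon'(-G)V_t^{-1}\mu_t+\nu'_\varepsilon,
\]
and total mass zero gives $\nu'_\varepsilon(X)=V_t^{-1}\int\phi'_\varepsilon(-G)\mu_t\le1$. The limit yields $\nu_-$ with $\nu_-(X)\le1$.

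\textbf{Step 4: total variation bound.} Since $|G|=G_++G_-$, the total variation of $\Delta_{g_t}|G|\mu_t$ is at most
\[
1+V_t^{-1}\mu_t(\{G>0\})+1+1+V_t^{-1}\mu_t(\{G<0\})+1\le 5,
\]
a bound independent of $t$.

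The only delicate point is justifying the distributional computation across the singularity at $x$. This is handled by noting that $\phi_\varepsilon\circ G$ agrees with $G-\varepsilon/2$ near $x$, so the delta comes directly from the defining equation \eqref{eq:green-normalization}. One should also check that the level set $\{G=0\}$ causes no issue: it contributes nothing to the absolutely continuous part because $\nabla G=0$ a.e. on it, and any singular part is absorbed into $\nu_\pm$.
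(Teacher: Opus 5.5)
Your proposal is correct and follows essentially the same route as the paper: regularize with smooth convex functions that agree with $G$ minus a constant near the pole, use the chain rule and test against $1$ to bound the masses of the nonnegative measures, and pass to a weak limit, treating $G_-$ the same way without a Dirac term. The constant in Step 4 can be taken to be $4$ rather than $5$, since $G_-$ contributes no delta mass; this overcount is harmless.
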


\begin{proof}
Choose smooth convex functions $\beta_j:\mathbb R\to\mathbb R_{\ge0}$ such that
\[
\beta_j(s)=0\quad\text{for }s\le0,\qquad
\beta_j(s)=s-a_j\quad\text{for }s\ge 2j^{-1},
\]
where $0\le a_j\le j^{-1}$, and
\[
0\le\beta_j'\le1,\qquad \beta_j''\ge0,\qquad
0\le s_+-\beta_j(s)\le j^{-1}.
\]
In particular, $\beta_j(G)\to G_+$ in $L^1(X,\mu_t)$ and
$\beta_j'(G)\to\mathbf 1_{\{G>0\}}$ almost everywhere and in $L^1(X,\mu_t)$.

We first treat $G_+$. Since $G(y)\to+\infty$ as $y\to x$, for each fixed $j$
there exists a neighborhood $U_j$ of $x$ such that $G\ge2j^{-1}$ on
$U_j\setminus\{x\}$. Thus
\[
\beta_j(G)=G-a_j,\qquad \beta_j'(G)=1,\qquad \beta_j''(G)=0
\]
on $U_j\setminus\{x\}$. Hence $\beta_j(G)$ has exactly the same Dirac singularity
as $G$ at $x$.

On $X\setminus\{x\}$, the usual chain rule gives
\[
\Delta\beta_j(G)
=
\beta_j'(G)\Delta G+\beta_j''(G)|\nabla G|_{g_t}^2
=
\frac1V\beta_j'(G)
+
\beta_j''(G)|\nabla G|_{g_t}^2.
\]
Because $\beta_j(G)=G-a_j$ near the pole, this identity extends across $x$ as
the distributional identity
\[
\Delta\beta_j(G)\,\mu
=
-\delta_x+\frac1V\beta_j'(G)\mu+\nu_{+,j},
\]
where
\[
\nu_{+,j}:=\beta_j''(G)|\nabla G|_{g_t}^2\mu
\]
is a nonnegative Radon measure.

Testing against $1$ gives
\[
0=-1+\frac1{V_t}\int_X\beta_j'(G)\,\mu_t+\nu_{+,j}(X),
\]
so $\nu_{+,j}(X)\le1$. Passing to a subsequence, $\nu_{+,j}$ converges weakly
to a nonnegative Radon measure $\nu_+$ with $\nu_+(X)\le1$. Since
$\beta_j'(G)\to\mathbf 1_{\{G>0\}}$ in $L^1(X,\mu_t)$ and
$\beta_j(G)\to G_+$ in distributions, we obtain
\[
\Delta_{g_t}G_+\,\mu_t
=
-\delta_x+\frac1{V_t}\mathbf 1_{\{G>0\}}\mu_t+\nu_+.
\]
The proof for $G_-$ is the same, applied to $\beta_j(-G)$. Since $G$ is positive
near the pole, $\beta_j(-G)$ vanishes in a neighborhood of $x$, and therefore no
Dirac mass occurs. On $X\setminus\{x\}$,
\[
\Delta_{g_t}\beta_j(-G)
=
-\beta_j'(-G)\Delta_{g_t}G+\beta_j''(-G)|\nabla G|_{g_t}^2,
\]
and hence distributionally
\[
\Delta_{g_t}\beta_j(-G)\,\mu_t
=
-\frac1{V_t}\beta_j'(-G)\mu_t+\nu_{-,j},
\qquad
\nu_{-,j}:=\beta_j''(-G)|\nabla G|_{g_t}^2\mu_t\ge0.
\]
Testing against $1$ yields
\[
\nu_{-,j}(X)=\frac1{V_t}\int_X\beta_j'(-G)\,\mu_t\le1.
\]
After passing to a subsequence,
$\nu_{-,j}\rightharpoonup\nu_-$ with $\nu_-(X)\le1$, and
$\beta_j'(-G)\to\mathbf 1_{\{G<0\}}$ in $L^1(X,\mu_t)$. Thus
\[
\Delta_{g_t}G_-\,\mu_t
=
-\frac1{V_t}\mathbf 1_{\{G<0\}}\mu_t+\nu_-.
\]

Finally, since $|G|=G_++G_-$,
\[
\Delta_{g_t}|G|\,\mu_t
=
-\delta_x
+
\frac1{V_t}\left(\mathbf 1_{\{G>0\}}-\mathbf 1_{\{G<0\}}\right)\mu_t
+\nu_++\nu_-.
\]
The total variation of the right-hand side is uniformly bounded, which proves
the last assertion.
\end{proof}

The following weighted energy identity will be used frequently in the sequel:
\begin{lemma}\label{lem:weighted-energy-identity}
Let $(M,g)$ be a closed Riemannian manifold and let $\Delta=\Delta_g$ be the real Laplacian operator.
If $w\in C^\infty(M)$ and $f\in C^\infty(M)$, then
\[
\int_M w|\nabla f|^2\,d\mu
=
-\int_M wf\Delta f\,d\mu
+
\frac12\int_M f^2\Delta w\,d\mu.
\]
In particular, if $w=|G_t(x,\cdot)|$, the same identity holds in the distributional
sense:
\[
\int_X |G_t||\nabla f|_{g_t}^2\,\mu_t
=
-\int_X |G_t|f\Delta_{g_t} f\,\mu_t
+
\frac12\int_X f^2\Delta_{g_t}|G_t|\,\mu_t.
\]
\end{lemma}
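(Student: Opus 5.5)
The smooth identity follows from the product rule for $\Delta(f^2)$ and integration by parts; the Green-weighted version then comes from approximating $|G_t(x,\cdot)|$ by smooth weights and passing to the limit with \cref{lem:laplacian-abs-green}.

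\textbf{Smooth case.} Since $\Delta(f^2)=2f\Delta f+2|\nabla f|^2$, we have
\[
w|\nabla f|^2=\tfrac12 w\Delta(f^2)-wf\Delta f .
\]
Integrating over the closed manifold $M$ and using the symmetry $\int_M w\Delta(f^2)\,d\mu=\int_M f^2\Delta w\,d\mu$ (Green's second identity, with no boundary terms) gives the first formula.

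\textbf{Weight $w=|G_t(x,\cdot)|$.} Here the right-hand side $\frac12\int f^2\Delta_{g_t}|G_t|\mu_t$ is read as the pairing of the continuous function $f^2$ with the signed Radon measure $\Delta_{g_t}|G|\,\mu_t$. By \cref{lem:laplacian-abs-green}, this measure has uniformly bounded total variation, so the pairing is finite. It contains the term $-\frac12 f(x)^2$ coming from the Dirac mass.

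I would approximate the weight by $w_j:=\beta_j(G)+\beta_j(-G)$, with $\beta_j$ as in the proof of \cref{lem:laplacian-abs-green}. Near the pole $w_j=G-a_j$, so $w_j$ is not smooth there. I would therefore apply the smooth identity on $X\setminus B_\rho(x)$ and let $\rho\to0$. The boundary terms then behave like the Green representation formula: $\int_{\partial B_\rho}f^2\partial_n G$ tends to $-f(x)^2$, while $\int_{\partial B_\rho}G\,\partial_n(f^2)=O(\rho\log\rho)\to0$ in real dimension four. Equivalently, the smooth identity holds with $\Delta w_j\,\mu_t$ interpreted distributionally, which is exactly the formula derived in that lemma.

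So for each $j$,
\[
\int w_j|\nabla f|^2\mu_t=-\int w_jf\Delta f\,\mu_t+\tfrac12\int f^2\,\Delta w_j\,\mu_t,
\]
with
\[
\Delta w_j\,\mu_t=-\delta_x+\tfrac1{V_t}\bigl(\beta_j'(G)-\beta_j'(-G)\bigr)\mu_t+\nu_{+,j}+\nu_{-,j}.
\]

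\textbf{Passing to the limit $j\to\infty$.}
\begin{itemize}
\item The left-hand side and the first term on the right converge by monotone and dominated convergence, since $w_j\uparrow|G|$ up to $2/j$ and $G\in L^1$.
\item The absolutely continuous part converges in $L^1$.
\item $\nu_{\pm,j}$ converge weakly to $\nu_\pm$ along a subsequence. Since $f^2$ is continuous on compact $X$, the pairings converge.
\end{itemize}
This gives the distributional identity. The limit is independent of the subsequence because the other terms converge, so it identifies the pairing with $\Delta_{g_t}|G_t|$ from the lemma.

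\textbf{Main obstacle.} The delicate point is the treatment at the pole and on the level set $\{G=0\}$. At the pole this is handled by the excision and boundary computation above. On $\{G=0\}$ the concentration of $\nu_\pm$ must be captured by weak convergence against the continuous test function $f^2$, which is why continuity of $f$, rather than mere integrability, is needed.
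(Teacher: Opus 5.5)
Your argument is correct, but for the Green-weighted part it takes a different route from the paper. The smooth case is the same computation, organized via $\Delta(f^2)=2f\Delta f+2|\nabla f|^2$ instead of two integrations by parts.

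\textbf{The paper's route.} The paper never works with a weight that still carries the pole. It uses $w_{\varepsilon,r}=\chi_r\,(G_t(x,\cdot)^2+\varepsilon^2)^{1/2}$, which is globally smooth because $\chi_r$ vanishes on $B_{g_t}(x,r)$. It applies the smooth identity with no boundary terms, then lets $\varepsilon\to0$ and $r\to0$ using only $L^1$ convergence of the weights. Since $f^2$ is a fixed smooth test function, $\langle\Delta_{g_t}(\chi_r|G_t|),f^2\rangle=\int\chi_r|G_t|\,\Delta_{g_t}(f^2)\,\mu_t$ converges trivially. \cref{lem:laplacian-abs-green} is invoked only at the end, to read the pairing as an integral against a finite measure.

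\textbf{Your route.} You reuse the regularizations $\beta_j(\pm G)$ of \cref{lem:laplacian-abs-green}, so your weight keeps the exact Dirac singularity. This forces you to excise $B_\rho(x)$ and evaluate the boundary terms with the standard asymptotics $G\sim c\rho^{-2}$, $|\nabla G|\sim c\rho^{-3}$ of the Green function of the smooth metric $g_t$. A small slip there: $\int_{\partial B_\rho}G\,\partial_n(f^2)$ is $O(\rho)$ in real dimension four, and $O(\rho\log\rho)$ is the two-dimensional rate; either bound suffices. You then pass to the limit term by term, including the weak convergence of $\nu_{\pm,j}$.

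\textbf{Comparison.} Your route yields the last term directly in the decomposed form
$-\tfrac12 f(x)^2+\tfrac1{2V_t}\int f^2\big(\mathbf 1_{\{G>0\}}-\mathbf 1_{\{G<0\}}\big)\mu_t+\tfrac12\int f^2\,d(\nu_++\nu_-)$.
The paper's route needs neither pole asymptotics nor compactness of measures.

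Both routes do more than is needed. Since $|\nabla f|^2=\tfrac12\Delta(f^2)-f\Delta f$ pointwise and $G_t(x,\cdot)\in L^1$, the identity is immediate once the last term is read as $\tfrac12\int|G_t|\,\Delta_{g_t}(f^2)\,\mu_t=\tfrac12\langle\Delta_{g_t}|G_t|,f^2\rangle$. \cref{lem:laplacian-abs-green} is then only needed to identify this pairing with a measure of bounded total variation.
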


\begin{proof}
For smooth $w$, integration by parts gives
\[
-\int_M wf\Delta f\,d\mu
=
\int_M\langle\nabla(wf),\nabla f\rangle\,d\mu
=
\int_Mw|\nabla f|^2\,d\mu
+
\int_Mf\langle\nabla w,\nabla f\rangle\,d\mu.
\]
On the other hand,
\[
\int_M f^2\Delta w\,d\mu
=
-\int_M\langle\nabla(f^2),\nabla w\rangle\,d\mu
=
-2\int_M f\langle\nabla f,\nabla w\rangle\,d\mu.
\]
Therefore
\[
\int_Mf\langle\nabla w,\nabla f\rangle\,d\mu
=
-\frac12\int_Mf^2\Delta w\,d\mu.
\]
Substituting this into the previous identity gives
\[
\int_M w|\nabla f|^2\,d\mu
=
-\int_M wf\Delta f\,d\mu
+
\frac12\int_M f^2\Delta w\,d\mu.
\]

For $w=|G_t(x,\cdot)|$, we argue by cutting off the pole first. Fix
$r>0$ and choose $\chi_r\in C^\infty(X)$ such that
\[
0\le \chi_r\le1,\qquad
\chi_r\equiv0\ \text{on }B_{g_t}(x,r),\qquad
\chi_r\equiv1\ \text{on }X\setminus B_{g_t}(x,2r).
\]
For $\varepsilon>0$, set
\[
w_{\varepsilon,r}:=\chi_r\left(G_t(x,\cdot)^2+\varepsilon^2\right)^{1/2}.
\]
Since $\chi_r$ vanishes near the pole, $w_{\varepsilon,r}$ is smooth on $X$.
Applying the smooth identity to $w_{\varepsilon,r}$ gives
\[
\int_X w_{\varepsilon,r}|\nabla f|_{g_t}^2\,\mu_t
=
-\int_X w_{\varepsilon,r}f\Delta_{g_t}f\,\mu_t
+
\frac12\int_X f^2\Delta_{g_t}w_{\varepsilon,r}\,\mu_t.
\]
Letting $\varepsilon\to0$, we have
\[
w_{\varepsilon,r}\to \chi_r|G_t(x,\cdot)|
\quad\text{in }L^1(X,\mu_t),
\]
and also in the sense of distributions. Hence
\[
\Delta_{g_t}w_{\varepsilon,r}
\to
\Delta_{g_t}\big(\chi_r|G_t(x,\cdot)|\big)
\]
distributionally. Therefore
\[
\int_X \chi_r|G_t|\,|\nabla f|_{g_t}^2\,\mu_t
=
-\int_X \chi_r|G_t|\,f\Delta_{g_t}f\,\mu_t
+
\frac12
\left\langle
\Delta_{g_t}\big(\chi_r|G_t|\big),f^2
\right\rangle.
\]
Finally let $r\to0$. Since $G_t(x,\cdot)$ is locally integrable near its pole,
$\chi_r|G_t|\to |G_t|$ in $L^1(X,\mu_t)$. Hence the first two terms converge by
dominated convergence. For the last term, the convergence is distributional:
for every smooth test function $\phi$,
\[
\left\langle
\Delta_{g_t}\big(\chi_r|G_t|\big),\phi
\right\rangle
=
\int_X\chi_r|G_t|\Delta_{g_t}\phi\,\mu_t
\longrightarrow
\int_X|G_t|\Delta_{g_t}\phi\,\mu_t
=
\left\langle\Delta_{g_t}|G_t|,\phi\right\rangle.
\]
Taking $\phi=f^2$ gives
\[
\int_X |G_t|\,|\nabla f|_{g_t}^2\,\mu_t
=
-\int_X |G_t|f\Delta_{g_t}f\,\mu_t
+
\frac12
\left\langle
\Delta_{g_t}|G_t|,f^2
\right\rangle.
\]
By \cref{lem:laplacian-abs-green}, $\Delta_{g_t}|G_t|$ is a Radon measure with
uniformly bounded total variation, so the last pairing is equivalently written as
\[
\frac12\int_X f^2\Delta_{g_t}|G_t|\,\mu_t.
\]
\end{proof}

\begin{proposition}\label{prop:surface-L1-green}
There is a uniform constants $C$ such that the real Green function $G_t$ satisfies
\[
\sup_{x\in X}\int_X|G_t(x,y)|\,\omega_t^2(y)\le C.
\]
\end{proposition}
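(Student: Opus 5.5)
We prove the $L^1$ bound by duality, reducing it to the $L^\infty$ bound of \cref{cor:Lap estimate final}. The only new issue compared with the K\"ahler case is that $G_t$ inverts the real Laplacian $\Delta_{g_t}$, whereas the corollary concerns the Chern Laplacian. The torsion drift is the term to control; we handle it with the weighted energy identity \cref{lem:weighted-energy-identity}, the total variation bound \cref{lem:laplacian-abs-green}, and the surface torsion estimate \cref{lem:surface-torsion-estimates}.

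\textbf{Step 1 (duality).} Fix $x$ and $t$. Set $\sigma:=\operatorname{sgn}(G_x)$, smoothed to $\sigma_k$ with $|\sigma_k|\le1$. Let $u$ solve the real Poisson equation $\Delta_{g_t}u=-2(\sigma_k-\bar\sigma_k)$ with $\int_Xu\,\mu_t=0$, where $\bar\sigma_k$ is the $\mu_t$-average. By \eqref{eq:real-green-formula},
\[
u(x)=2\int_X G_x(\sigma_k-\bar\sigma_k)\,\mu_t=2\int_XG_x\sigma_k\,\mu_t,
\]
using $\int_XG_x\mu_t=0$. Letting $k\to\infty$, it therefore suffices to bound $\sup_X|u|$ uniformly.

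\textbf{Step 2 (drift estimate).} By \cref{lem:real-complex-laplacian}, $\Delta_{\omega_t}u=-(\sigma_k-\bar\sigma_k)-\langle\nabla u,\theta_t^\#\rangle$. The zeroth-order part is bounded by $2$. We cannot apply \cref{cor:Lap estimate final} directly because the drift is only controlled in integral norms. Instead we write $u=u_1+u_2$, where $u_1$ solves $\Delta_{\omega_t}u_1=-(\sigma_k-\bar\sigma_k)+c_1$ with a normalizing constant, so that $\|u_1\|_\infty\le C$ by \cref{cor:Lap estimate final}. We then express $u_2$ through the Green formula:
\[
u_2(x)=\text{avg}-2\int_XG_x\big(\langle\nabla u,\theta_t^\#\rangle+\text{const}\big)\mu_t .
\]
Cauchy--Schwarz gives
\[
\Big|\int G_x\langle\nabla u,\theta\rangle\Big|\le\Big(\int|G_x||\nabla u|^2\Big)^{1/2}\Big(\int|G_x||\theta_t|^2\Big)^{1/2}.
\]
We bound the first factor by \cref{lem:weighted-energy-identity} with $w=|G_x|$ and $f=u$:
\[
\int|G_x||\nabla u|^2\le 4\|u\|_\infty\int|G_x|+\tfrac12\|u\|_\infty^2\,\|\Delta|G_x|\|_{TV}.
\]
For the second factor we use the pointwise bound $|\theta_t|^2\le Ce^{-2t}\operatorname{tr}_{\omega_t}\omega_0$ from \cref{lem:surface-torsion-estimates}. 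This requires bounding $\int|G_x|\operatorname{tr}_{\omega_t}\omega_0\,\mu_t$. We do this by testing $\Delta|G_x|$ against a potential $\psi$ with $\sqrt{-1}\partial\bar\partial$-type control, or more simply by the same weighted identity applied to the bounded potential $\varphi_t$. Indeed, $\operatorname{tr}_{\omega_t}\omega_0\le e^{t}(2-\Delta_{\omega_t}\varphi_t)$ up to the $\chi$ term, and $\varphi_t$ is uniformly bounded by \cref{lem:basic-flow-estimates}. Integrating $|G_x|\Delta_{g_t}\varphi_t$ by parts against the measure $\Delta|G_x|$ gives a bound of the form $Ce^{t}\big(1+\int|G_x|\big)$, up to a further torsion term that is absorbed in the same way. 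Hence
\[
\int|G_x||\theta_t|^2\le Ce^{-t}\Big(1+\sup_z\int|G_z|\Big).
\]

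\textbf{Step 3 (absorption).} Put $M_t:=\sup_z\int|G_z|\mu_t$, which is finite for each fixed $t$, and $U:=\sup|u|\le M_t+C$. The estimates above combine to
\[
U\le C+C\,(UM_t+U^2)^{1/2}e^{-t/2}(1+M_t)^{1/2}.
\]
Taking the supremum over $x$ yields $M_t\le C+Ce^{-t/2}(1+M_t)^{3/2}$, at least after choosing the normalization carefully. This absorbs for $t\ge T_0$ once $M_t$ is known a priori to be, for instance, $o(e^{t/3})$. The main obstacle is exactly this nonlinear absorption. I would first try a continuity argument in $t$: $M_t$ is continuous in $t$ and bounded on $[0,T_0]$ by smooth dependence, so the bad branch can never be reached. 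Alternatively, one can make the bound linear in $M_t$ by normalizing $\sigma$ and using $\|u\|_\infty$ only through $u_1$, iterating the decomposition so that each drift step gains a factor $e^{-t/2}$, in the spirit of the linear iteration described in the introduction.
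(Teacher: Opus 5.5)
\textbf{There is a genuine gap: the drift term is set up so that the absorption becomes superlinear, and it cannot be closed.} Your duality set-up in Step 1 is fine. So is your bound for $\int_X|G_x|\,|\theta_t|^2_{\omega_t}\mu_t$ via the bounded potential $\varphi_t$: that is the paper's $M,P,E_\varphi$ bootstrap, which is linear because $\|\varphi_t\|_{L^\infty}\le C$.

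The problem is in the drift term. Your representation of $u_2$ treats $G_x$ as if it inverted the Chern Laplacian. In fact $G_x$ inverts $\Delta_{g_t}=2\Delta_{\omega_t}+2\langle\nabla\cdot,\theta_t^\#\rangle_{\omega_t}$, and one computes $\Delta_{g_t}u_2=-2c_1-2\langle\nabla u_1,\theta_t^\#\rangle_{\omega_t}$. So the drift involves $\nabla u_1$, not $\nabla u$. By keeping $\nabla u$, you must estimate $\int_X|G_x|\,|\nabla u|^2\mu_t$ through $\|u\|_{L^\infty}\lesssim M_t$. That is exactly what produces $M_t\le C+Ce^{-t/2}(1+M_t)^{3/2}$.

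That inequality does not close. For large $t$ it holds on a bounded interval and also on $[m_2(t),\infty)$ with $m_2(t)\approx ce^{t}$. Your continuity argument needs a starting time $T_0$ with $M_{T_0}<m_2(T_0)$. Smooth dependence only gives $\sup_{[0,T_0]}M_t\le C(T_0)$, with no control of $C(T_0)$ in $T_0$. Requiring $C(T_0)\ll e^{T_0}$ is precisely the a priori growth bound you do not have, since nothing controls how fast $\int|G_t|$ can degenerate near $E$. The ``iteration'' alternative is not specified enough to fill this gap.

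\textbf{The missing idea} is to apply the real Green formula directly to a Chern--Poisson solution, whose sup norm is uniformly bounded. The paper takes $h_k\to-\mathbf 1_{\{G\ge0\}}$ and solves $\Delta_{\omega_t}v_k=h_k+c_k$ with $\int_Xv_k\mu_t=0$. Then $\|v_k\|_{L^\infty}\le C$ by \cref{cor:Lap estimate final}, and
\[
v_k(x)=\int_XG(-2h_k)\,\mu_t-2\int_XG\langle\nabla v_k,\theta_t^\#\rangle_{\omega_t}\,\mu_t .
\]
The weighted energy bound $E_k=\int_X|G|\,|\nabla v_k|^2_{\omega_t}\mu_t\le C(A+1)+CT_k$ now uses only $\|v_k\|_{L^\infty}\le C$. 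Combined with $T_k^2\le KE_k$ and $K\le Ce^{-t}(A+1)$, this gives $T_k\le Ce^{-t/2}(A+1)$, hence $A\le C+Ce^{-t/2}(A+1)$. This is linear in $A$. It absorbs for each fixed large $t$ simply because $A<\infty$ there, so no continuity argument is needed.

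Your own decomposition, computed correctly, gives the same thing:
\[
u(x)=u_1(x)-\bar u_1+2\int_XG_x\langle\nabla u_1,\theta_t^\#\rangle_{\omega_t}\,\mu_t ,
\]
where $\bar u_1$ is the $\mu_t$-average of $u_1$. So the auxiliary real Poisson solution $u$ is superfluous.
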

\begin{proof}
It is enough to prove the estimate for all sufficiently large $t$. Fix $x\in X$, as usual we set
\[
G:=G_t(x,\cdot),\qquad \mu_t:=\omega_t^2,\qquad A:=\int_X|G|\,\mu_t.
\]
The goal is then to obtain a uniform bound of $A$ from above. Let
\[
K:=\int_X|G|\,|\theta_t|_{\omega_t}^2\,\mu_t,\qquad
M:=\int_X|G|\operatorname{tr}_{\omega_t}\hat\omega_t\,\mu_t.
\]
By \cref{lem:surface-torsion-estimates}, we have
\begin{equation}\label{eq:K and M}
K\le Ce^{-t}M.
\end{equation}
Since $\omega_t=\hat\omega_t+\sqrt{-1}\partial\bar\partial\varphi_t$, in complex
dimension two, we have $\Delta_{\omega_t}\varphi_t=2-\operatorname{tr}_{\omega_t}\hat\omega_t$. Hence 
\[
\operatorname{tr}_{\omega_t}\hat\omega_t
=
2-\frac12\Delta_{g_t}\varphi_t
+
\langle\nabla\varphi_t,\theta_t^\#\rangle_{\omega_t}.
\]
Thus
\begin{equation}\label{eq:M and P}
M\le C(A+1)+P,\qquad
P:=\int_X|G|\,|\nabla\varphi_t|_{\omega_t}|\theta_t|_{\omega_t}\,\mu_t.
\end{equation}
Indeed, by the distributional bound for $\Delta_{g_t}|G|$ and the uniform
boundedness of $\varphi_t$,
\[
\left|\int_X|G|\Delta_{g_t}\varphi_t\,\mu_t\right|
=
\left|\left\langle\Delta_{g_t}|G|,\varphi_t\right\rangle\right|
\le C.
\]
Set
\[
E_\varphi:=\int_X|G|\,|\nabla\varphi_t|_{\omega_t}^2\,\mu_t.
\]
By the Cauchy-Schwarz inequality, $P^2\le KE_\varphi$. The weighted energy identity applied with $f=\varphi_t$ \cref{lem:weighted-energy-identity} gives
\[
E_\varphi
\le
C\int_X|G|\,|\Delta_{g_t}\varphi_t|\,\mu_t+C.
\]
Since
\[
\Delta_{g_t}\varphi_t
=
4-2\operatorname{tr}_{\omega_t}\hat\omega_t
+
2\langle\nabla\varphi_t,\theta_t^\#\rangle_{\omega_t},
\]
we obtain
\[
E_\varphi\le C(A+M+P+1).
\]
Consequently,
\[
P^2\le KE_\varphi
\le Ce^{-t}M(A+M+P+1).
\]
Combining this with \eqref{eq:M and P} yields
\[
P^2\le Ce^{-t}(A+1+P)^2.
\]
This implies that for $t$ sufficiently large,
\[
P\le Ce^{-t/2}(A+1),
\]
which combined with \eqref{eq:M and P} and \eqref{eq:K and M} shows that
\begin{equation}\label{eq:M,K,A}
    M\le C(A+1),\qquad K\le Ce^{-t}(A+1).
\end{equation}
Choose smooth functions $h_k$ with $|h_k|\le2$ and
$h_k\to-\mathbf 1_{\{G\ge0\}}$ almost everywhere. Let $v_k$ solve
\[
\Delta_{\omega_t}v_k=h_k+c_k,\qquad \int_Xv_k\,\mu_t=0.
\]
The constant satisfies $|c_k|\le2$ (cf. \cite[Lemma 4.3, Lemma 4.4]{Sun26}). By \cref{cor:Lap estimate final}, we get
\[
\|v_k\|_{L^\infty}\le C.
\]
Define
\[
T_k:=\int_X|G|\,|\nabla v_k|_{\omega_t}|\theta_t|_{\omega_t}\,\mu_t,\qquad
E_k:=\int_X|G|\,|\nabla v_k|_{\omega_t}^2\,\mu_t.
\]
By the weighted energy identity \cref{lem:weighted-energy-identity} and the total variation bound for
$\Delta_{g_t}|G|$ \cref{lem:laplacian-abs-green},
\begin{equation}\label{eq:E_k and T_k}
    \begin{aligned}
    E_k&\le C\int_X|G|\,|\Delta_{g_t}v_k|\,\mu_t+C\\
    &=C\int_X|G|\,|2(h_k+c_k)+2\langle\nabla v_k,\theta_t^\#\rangle_{\omega_t}|\,\mu_t+C\\
    &\le C(A+1)+CT_k.
\end{aligned}
\end{equation}
On the other hand, $T_k^2\le KE_k$, hence \eqref{eq:E_k and T_k} and \eqref{eq:M,K,A} gives
\[
T_k^2\le Ce^{-t}(A+1)(A+1+T_k).
\]
It follows that, for all sufficiently large $t$,
\begin{equation}\label{eq:estimate of T_k}
    T_k\le Ce^{-t/2}(A+1).
\end{equation}
Since $\int_Xv_k\,\mu_t=0$, the real Green formula gives
\[
v_k(x)
=
-\int_XG\,\Delta_{g_t}v_k\,\mu_t
=
\int_XG(-2h_k)\,\mu_t
-
2\int_XG\langle\nabla v_k,\theta_t^\#\rangle_{\omega_t}\,\mu_t.
\]
The last term is bounded in absolute value by $2T_k$. Therefore
\begin{equation}\label{eq:hk and Tk}
    \int_XG(-2h_k)\,\mu_t
\le C+2T_k.
\end{equation}
Letting $k\to\infty$, using $G\in L^1(X,\mu_t)$ for fixed $t,x$, we obtain
\[
\int_XG(-2h_k)\,\mu_t
\longrightarrow
2\int_{\{G\ge0\}}G\,\mu_t.
\]
Since $\int_XG\,\mu_t=0$, we have $2\int_{\{G\ge0\}}G\,\mu_t=\int_X|G|\,\mu_t=A$. Thus \eqref{eq:estimate of T_k} and \eqref{eq:hk and Tk} yields
\[
A\le C+Ce^{-t/2}(A+1).
\]
For $t$ sufficiently large, the last term is absorbed into the left-hand side, so $A\le C$. This proves the desired uniform $L^1$ bound.
\end{proof}
\begin{remark}\label{rmk:kato-torsion-input-surface}
    From the proof of \cref{prop:surface-L1-green} (see \eqref{eq:K and M} and \eqref{eq:M,K,A}), we can also extract a very useful estimate: there is a uniform constant $C$ such that
    \begin{equation}
\kappa_t:=
\sup_{z\in X}
\int_X |G_z(y)|\,|\theta_t(y)|_{\omega_t}^2\,\mu_t(y)
\le Ce^{-t}.
\end{equation}
\end{remark}

\subsection{Lower bound for the Green function on Hermitian surfaces}

To avoid any regularity issue caused by the norm $|\theta_t|_{\omega_t}$ at its zero set, we introduce the smooth function
\[
\Theta_t:=\left(|\theta_t|_{\omega_t}^2+e^{-4t}\right)^{1/2}.
\]
Then it follows from \cref{lem:surface-torsion-estimates} and \cref{rmk:kato-torsion-input-surface} that
\begin{equation}\label{eq:Theta-properties}
\Theta_t\ge |\theta_t|_{\omega_t},
\qquad
\int_X\Theta_t^2\,\mu_t\le Ce^{-2t},
\qquad
\sup_{z\in X}\int_X|G_z|\,\Theta_t^2\,\mu_t\le Ce^{-t}.
\end{equation}
For any measurable function $f$, define
\[
m_t(f):=\int_X |f|\,\mu_t,\qquad
P_t(f):=\sup_{z\in X}\int_X |G_z(y)|\,|f(y)|\,\mu_t(y),
\qquad
D_t(f):=m_t(f)+P_t(f).
\]

\begin{lemma}\label{lem:real-poisson-green-norm}
Let $h\in C^\infty(X)$ and let $q_h$ solve
\begin{equation}\label{eq:real-poisson-qh}
\Delta_{g_t}q_h=2(h-\bar h),\qquad
\int_Xq_h\,\mu_t=0,
\end{equation}
where $\bar h=V_t^{-1}\int_Xh\,\mu_t$. Then there is a uniform constant $C$, independent of $t$, such that
\begin{equation}\label{eq:qh-osc-Dh}
\operatorname{osc}_Xq_h\le CD_t(h),
\end{equation}
and
\begin{equation}\label{eq:qh-weighted-energy}
\sup_{z\in X}\int_X|G_z|\,|\nabla q_h|_{\omega_t}^2\,\mu_t
+
\int_X|\nabla q_h|_{\omega_t}^2\,\mu_t
\le CD_t(h)^2.
\end{equation}
\end{lemma}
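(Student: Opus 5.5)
The plan is to use the real Green representation formula \eqref{eq:real-green-formula} for the oscillation bound, and the weighted energy identity (\cref{lem:weighted-energy-identity}) together with \cref{lem:laplacian-abs-green} for the energy bound.

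\textbf{Oscillation.} Since $\int_Xq_h\,\mu_t=0$, for every $x\in X$ we have
\[
q_h(x)=-2\int_XG_x(y)\big(h(y)-\bar h\big)\,\mu_t(y).
\]
Hence
\[
|q_h(x)|\le 2\int_X|G_x|\,|h|\,\mu_t+2|\bar h|\int_X|G_x|\,\mu_t\le 2P_t(h)+2V_t^{-1}m_t(h)\,A,
\]
where $A$ is the uniform $L^1$ bound on $G$ from \cref{prop:surface-L1-green}. Since $V_t$ is bounded below uniformly, this gives $\sup_X|q_h|\le CD_t(h)$, and in particular \eqref{eq:qh-osc-Dh}. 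No torsion appears at this stage, because the equation is posed for the real Laplacian.

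\textbf{Weighted energy.} Apply \cref{lem:weighted-energy-identity} with $f=q_h$ and $w=|G_z|$:
\[
\int_X|G_z|\,|\nabla q_h|^2\mu_t=-2\int_X|G_z|\,q_h\,(h-\bar h)\,\mu_t+\tfrac12\big\langle \Delta_{g_t}|G_z|,q_h^2\big\rangle .
\]
For the first term, use $|q_h|\le\|q_h\|_\infty\le CD_t(h)$ and the same estimate as above, so that $\int|G_z|\,|h-\bar h|\,\mu_t\le CD_t(h)$; the term is therefore bounded by $CD_t(h)^2$. For the second term, use the uniform total-variation bound of \cref{lem:laplacian-abs-green} together with $\|q_h^2\|_\infty\le CD_t(h)^2$. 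One point needs care: the pairing of a measure with the merely continuous function $q_h^2$. This is legitimate because $q_h$ is smooth, so the distributional pairing coincides with integration against the Radon measure.

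\textbf{Unweighted energy.} Integrate by parts directly:
\[
\int_X|\nabla q_h|^2\mu_t=-2\int_Xq_h\,(h-\bar h)\,\mu_t\le 2\|q_h\|_\infty\cdot 2m_t(h)\le CD_t(h)^2 .
\]
Finally, the norms of $g_t$ and $\omega_t$ agree up to a fixed constant factor, which gives \eqref{eq:qh-weighted-energy}.

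\textbf{Main obstacle.} The only delicate step is justifying the weighted identity with the singular weight $|G_z|$. This is already handled in \cref{lem:weighted-energy-identity}. Uniformity in $z$ then follows because every bound used above is a supremum over $z$.
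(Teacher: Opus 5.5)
Your proposal is correct and follows essentially the same route as the paper. The Green representation formula together with the uniform $L^1$ bound of \cref{prop:surface-L1-green} gives $\|q_h\|_{L^\infty}\le CD_t(h)$; then \cref{lem:weighted-energy-identity} combined with the total variation bound of \cref{lem:laplacian-abs-green} gives the weighted energy bound, and plain integration by parts gives the unweighted one, exactly as in the paper's argument.
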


\begin{proof}
By Green's formula and \eqref{eq:real-poisson-qh},
\[
q_h(z)=-2\int_XG_z(y)(h(y)-\bar h)\,\mu_t(y).
\]
Using \cref{prop:surface-L1-green} and $|\bar h|\le CV_t^{-1}m_t(h)$, we get
\[
|q_h(z)|
\le
2\int_X|G_z||h|\,\mu_t
+
2|\bar h|\int_X|G_z|\,\mu_t
\le CD_t(h),
\]
which proves \eqref{eq:qh-osc-Dh}. Since $\int_Xq_h\,\mu_t=0$, this also gives
$\|q_h\|_{L^\infty}\le CD_t(h)$.

For the weighted energy estimate, \cref{lem:weighted-energy-identity} gives
\[
\int_X|G_z|\,|\nabla q_h|_{\omega_t}^2\,\mu_t
=
-\int_X|G_z|q_h\Delta_{g_t}q_h\,\mu_t
+
\frac12\int_Xq_h^2\Delta_{g_t}|G_z|\,\mu_t.
\]
The first term is bounded by
\[
C\|q_h\|_{L^\infty}
\left(
\int_X|G_z||h|\,\mu_t
+
|\bar h|\int_X|G_z|\,\mu_t
\right)
\le CD_t(h)^2,
\]
and the second by $C\|q_h\|_{L^\infty}^2$ using
\(\|\Delta_{g_t}|G_z|\|_{\mathcal M}\le C\). Taking the supremum in $z$ gives
the weighted part of \eqref{eq:qh-weighted-energy}. The unweighted estimate follows from
\[
\int_X|\nabla q_h|_{\omega_t}^2\,\mu_t
=
-\int_Xq_h\Delta_{g_t}q_h\,\mu_t
\le C\|q_h\|_{L^\infty}m_t(h)
\le CD_t(h)^2.
\]
\end{proof}
Next we construct a barrier function to eliminate $F$ in the linear Laplacian estimates containing drift terms.
\begin{lemma}\label{lem:linear-drift-iteration}
Let $F\ge0$ be a smooth function on $X$. For all sufficiently large $t$, there exist
$Q_F\in C^\infty(X)$ and $c_F\in\mathbb R$ such that
\begin{equation}\label{eq:linear-iteration-equation}
\Delta_{\omega_t}Q_F=F-c_F,
\qquad
\int_XQ_F\,\mu_t=0.
\end{equation}
Moreover,
\begin{equation}\label{eq:linear-iteration-bounds}
\operatorname{osc}_XQ_F+|c_F|\le CD_t(F).
\end{equation}
\end{lemma}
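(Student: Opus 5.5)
The plan is to solve $\Delta_{\omega_t}Q=F-c$ by converting it into a real Poisson equation via \cref{lem:real-complex-laplacian}, $\Delta_{g_t}Q=2\Delta_{\omega_t}Q+2\langle\nabla Q,\theta_t^\#\rangle_{\omega_t}$, and absorbing the drift term by a Neumann-series iteration built from \cref{lem:real-poisson-green-norm}. Set $h_0:=F$ and define inductively $q_i:=q_{h_i}$ as the solution of \eqref{eq:real-poisson-qh}, namely $\Delta_{g_t}q_i=2(h_i-\bar h_i)$ with $\int_Xq_i\mu_t=0$. Then put $h_{i+1}:=\langle\nabla q_i,\theta_t^\#\rangle_{\omega_t}$. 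Each $h_i$ is smooth because $q_i$ is, so the iteration is well defined. By construction,
\[
2\Delta_{\omega_t}q_i=\Delta_{g_t}q_i-2h_{i+1}=2(h_i-\bar h_i)-2h_{i+1}.
\]
Summing over $i$, the sum telescopes: formally $Q:=\sum_i q_i$ satisfies $\Delta_{\omega_t}Q=F-\sum_i\bar h_i$. We therefore take $c_F:=\sum_i\bar h_i$.

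The key step is the contraction estimate $D_t(h_{i+1})\le Ce^{-t/2}D_t(h_i)$. For the Green-weighted part, Cauchy--Schwarz gives
\[
\int_X|G_z||h_{i+1}|\mu_t\le\Big(\int_X|G_z||\nabla q_i|^2\mu_t\Big)^{1/2}\Big(\int_X|G_z||\theta_t|^2\mu_t\Big)^{1/2}.
\]
The first factor is at most $CD_t(h_i)$ by \eqref{eq:qh-weighted-energy}, and the second is at most $Ce^{-t/2}$ by \cref{rmk:kato-torsion-input-surface}. For the mass part, $m_t(h_{i+1})\le\|\nabla q_i\|_{L^2}\|\theta_t\|_{L^2}\le CD_t(h_i)e^{-t}$, using the unweighted bound in \eqref{eq:qh-weighted-energy} together with \cref{lem:surface-torsion-estimates}. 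For $t$ large enough that $Ce^{-t/2}\le 1/2$, this yields $D_t(h_i)\le 2^{-i}D_t(F)$.

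Next comes convergence and regularity. By \eqref{eq:qh-osc-Dh} and the zero-mean normalization, $\|q_i\|_{L^\infty}\le C2^{-i}D_t(F)$, and $|\bar h_i|\le CV_t^{-1}m_t(h_i)\le C2^{-i}D_t(F)$ since $V_t$ is bounded below. Hence $\sum q_i$ converges uniformly and $\sum\bar h_i$ converges, with $\operatorname{osc}Q+|c_F|\le CD_t(F)$, which gives \eqref{eq:linear-iteration-bounds}. To see that the limit is smooth and solves the equation, I would not rely on the series alone; smoothness at fixed $t$ does not require uniform constants. Instead I would argue as follows. The operator $\Delta_{\omega_t}$ has index zero, and its cokernel is spanned by a positive Gauduchon-type density, so there is a unique constant $c$ and a smooth zero-mean $Q'$ with $\Delta_{\omega_t}Q'=F-c$. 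The partial sums $S_N=\sum_{i\le N}q_i$ satisfy $\Delta_{\omega_t}S_N=F-\sum_{i\le N}\bar h_i-h_{N+1}$. Convergence of the partial sums in the distributional sense then identifies $Q=Q'$ and $c=c_F$. This requires $h_{N+1}\to0$ weakly, which follows from $m_t(h_{N+1})\to0$, and $S_N\to Q$ uniformly.

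The main obstacle is the contraction step: it needs the weighted energy bound in \cref{lem:real-poisson-green-norm} uniformly in $z$, combined with the Green-weighted torsion estimate $\kappa_t\le Ce^{-t}$. The rest is bookkeeping, plus the fixed-$t$ identification of the limit so that $Q_F$ is genuinely smooth.
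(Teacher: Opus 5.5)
Your proposal is correct and follows the paper's route. You use the same iteration $h_{i+1}=\langle\nabla q_i,\theta_t^\#\rangle_{\omega_t}$ built from \cref{lem:real-poisson-green-norm}, the same Cauchy--Schwarz contraction $D_t(h_{i+1})\le Ce^{-t/2}D_t(h_i)$ via \cref{rmk:kato-torsion-input-surface} and \cref{lem:surface-torsion-estimates}, and the same telescoping identity passed to the distributional limit.

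The only difference is your detour through the Gauduchon cokernel to identify the limit, which is harmless but unnecessary. The paper simply applies elliptic regularity for the fixed operator $\Delta_{\omega_t}$ to the distributional equation $\Delta_{\omega_t}Q_F=F-c_F$, whose right-hand side is smooth. Your identification step $Q=Q'$ needs that same regularity anyway.
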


\begin{proof}
Set $h_0:=F$. Once $h_i$ is defined, let $q_i$ solve
\[
\Delta_{g_t}q_i=2(h_i-\bar h_i),\qquad
\int_Xq_i\,\mu_t=0,\qquad
\bar h_i:=V_t^{-1}\int_Xh_i\,\mu_t,
\]
and put
\[
h_{i+1}:=\langle\nabla q_i,\theta_t^\#\rangle_{\omega_t}.
\]
By \cref{lem:real-poisson-green-norm},
\begin{equation}\label{eq:osc qi}
    \operatorname{osc}_Xq_i\le CD_t(h_i),
\end{equation}
and
\[
\sup_z\int_X|G_z|\,|\nabla q_i|_{\omega_t}^2\,\mu_t
+
\int_X|\nabla q_i|_{\omega_t}^2\,\mu_t
\le CD_t(h_i)^2.
\]
Using \eqref{eq:Theta-properties} and the Cauchy-Schwarz inequality,
\[
P_t(h_{i+1})
\le
\left(\sup_z\int_X|G_z|\Theta_t^2\,\mu_t\right)^{1/2}
\left(\sup_z\int_X|G_z|\,|\nabla q_i|_{\omega_t}^2\,\mu_t\right)^{1/2}
\le Ce^{-t/2}D_t(h_i),
\]
and similarly
\[
m_t(h_{i+1})
\le
\left(\int_X\Theta_t^2\,\mu_t\right)^{1/2}
\left(\int_X|\nabla q_i|_{\omega_t}^2\,\mu_t\right)^{1/2}
\le Ce^{-t}D_t(h_i).
\]
Thus
\[
D_t(h_{i+1})\le Ce^{-t/2}D_t(h_i).
\]
For all sufficiently large $t$, the factor on the right is at most $1/2$; hence
\[
D_t(h_i)\le 2^{-i}D_t(F).
\]
It thus follows from \eqref{eq:osc qi} that the series $\sum_i q_i$ converges uniformly, since each $q_i$ has zero
$\mu_t$-average and hence $\|q_i\|_{L^\infty}\le \operatorname{osc}_Xq_i$.
Define
\[
Q_F:=\sum_{i=0}^\infty q_i,\qquad
c_F:=\sum_{i=0}^\infty \bar h_i.
\]
Then
\[
\operatorname{osc}_XQ_F\le C\sum_iD_t(h_i)\le CD_t(F),
\]
and
\[
|c_F|\le C\sum_i m_t(h_i)\le CD_t(F).
\]
It remains to check the equation. By definition we have
\[
\Delta_{\omega_t}q_i
=
h_i-\bar h_i-h_{i+1}.
\]
Therefore, for $N\ge0$,
\[
\Delta_{\omega_t}\sum_{i=0}^Nq_i
=
h_0-\sum_{i=0}^N\bar h_i-h_{N+1}.
\]
Since $D_t(h_{N+1})\to0$, we have $h_{N+1}\to0$ in $L^1(X,\mu_t)$. Passing to
the limit in the sense of distributions gives
\[
\Delta_{\omega_t}Q_F=F-c_F.
\]
The right-hand side is smooth, so standard elliptic regularity for the fixed
smooth operator $\Delta_{\omega_t}$ gives $Q_F\in C^\infty(X)$. The normalization
$\int_XQ_F\,\mu_t=0$ follows from the normalization of each $q_i$.
\end{proof}

We can now give the proof of the estimate of Poisson equations containing drift terms:
\begin{lemma}\label{lem:drifted-laplacian-estimate}
Let $u\in C^\infty(X)$ and $F\ge0$ be smooth. If 
\[
\Delta_{\omega_t}u\ge -a-F\quad\text{on }\{u>0\},
\]
then, for all sufficiently large $t$, there is a uniform constant $C$ such that
\[
\sup_Xu\le C\left(a+\int_Xu_+\,\mu_t+D_t(F)\right).
\]
\end{lemma}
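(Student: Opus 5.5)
The plan is to absorb $F$ into a barrier using \cref{lem:linear-drift-iteration}, and then apply \cref{lem:laplacian estimate 1} to the modified function. For $t$ sufficiently large, \cref{lem:linear-drift-iteration} provides $Q_F\in C^\infty(X)$ and $c_F\in\mathbb R$ satisfying
\[
\Delta_{\omega_t}Q_F=F-c_F,\qquad \int_XQ_F\,\mu_t=0,\qquad \operatorname{osc}_XQ_F+|c_F|\le CD_t(F).
\]
Since $Q_F$ has zero $\mu_t$-average, we get $\|Q_F\|_{L^\infty}\le \operatorname{osc}_XQ_F\le CD_t(F)$. Set $b:=\|Q_F\|_{L^\infty}$ and define
\[
w:=u+Q_F-b,
\]
so that $w\le u$ everywhere.

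On the region $\{w>0\}\subset\{u>0\}$ the hypothesis applies. It gives
\[
\Delta_{\omega_t}w=\Delta_{\omega_t}u+F-c_F\ge -a-F+F-c_F\ge -(a+|c_F|).
\]
Thus $w$ satisfies the hypothesis of \cref{lem:laplacian estimate 1} with constant $a':=a+|c_F|\le a+CD_t(F)$. Here $w$ is smooth, so $w\in C^2(\overline{\{w>0\}})$; if $\{w>0\}$ is empty, then $\sup w\le 0$ trivially. The remark after \cref{lem:laplacian estimate 1} allows the lemma to be used with only $\int_X w_+\,\mu_t$ on the right-hand side and no zero-mean normalization. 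We therefore obtain
\[
\sup_X w\le C\Big(a+CD_t(F)+\int_X w_+\,\mu_t\Big)\le C\Big(a+D_t(F)+\int_X u_+\,\mu_t\Big),
\]
where the second step uses $w_+\le u_+$.

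To return to $u$, note that $u=w-Q_F+b\le w+2b$. Hence
\[
\sup_X u\le \sup_X w+CD_t(F),
\]
which gives the claim. The constants are uniform because \cref{lem:laplacian estimate 1} depends only on the entropy and volume data, and these are uniformly controlled along the flow.

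The substantive input is entirely \cref{lem:linear-drift-iteration}, and specifically the bound on $|c_F|$ and on the oscillation of $Q_F$ in terms of $D_t(F)$. This is why the statement is restricted to large $t$. The one point I would double-check is the homogeneity in \cref{lem:laplacian estimate 1}. The remark literally treats only the normalization $\int_X v_+\,\mu_t\le1$. For general $w$ one rescales by $\lambda=a'+\int_X w_+\,\mu_t$ (the case $\lambda=0$ being trivial) and applies the lemma to $w/\lambda$, which satisfies the hypothesis with constant $a'/\lambda\le 1$. This yields the stated linear bound.
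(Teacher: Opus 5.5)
Your proposal is correct and follows the paper's proof essentially verbatim. Both shift $u$ by a nonpositive normalization of the barrier $Q_F$ from \cref{lem:linear-drift-iteration}: the paper uses $Q_F-\sup_XQ_F$, you use $Q_F-\|Q_F\|_{L^\infty}$, and the difference is immaterial. Both then apply \cref{lem:laplacian estimate 1} on the smaller superlevel set with constant $a+|c_F|$ and recover $u$ at the cost of $CD_t(F)$. Your rescaling check is harmless but unnecessary, since \cref{lem:laplacian estimate 1} is already stated in that linear form.
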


\begin{proof}
Let $Q_F,c_F$ be given by \cref{lem:linear-drift-iteration}. Set
\[
\widetilde Q_F:=Q_F-\sup_XQ_F\le0,\qquad U:=u+\widetilde Q_F.
\]
Then $U\le u$, so $\{U>0\}\subset\{u>0\}$. On this set,
\[
\Delta_{\omega_t}U
\ge
-a-F+F-c_F
\ge
-a-|c_F|.
\]
By \cref{lem:laplacian estimate 1}, we deduce that
\[
\sup_XU\le C\left(a+|c_F|+\int_XU_+\,\mu_t\right)
\le C\left(a+D_t(F)+\int_Xu_+\,\mu_t\right).
\]
Finally,
\[
u=U-\widetilde Q_F\le \sup_XU+\operatorname{osc}_XQ_F,
\]
and the conclusion follows from \eqref{eq:linear-iteration-bounds}.
\end{proof}
\begin{proposition}\label{prop:surface-inf-green}
There exists a uniform constant $C$ such that
\[
\inf_{y\in X}G_t(x,y)\ge -C
\]
for all $x\in X$ and all $t\ge0$.
\end{proposition}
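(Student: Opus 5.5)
Following the Kähler strategy of \cite{GPSS24a} and \cite{Sun26}, I will fix $x\in X$ and a large $t$ (for bounded $t$ the metrics $\omega_t$ are uniformly equivalent to $\omega_0$, so the estimate is classical). I then represent $G_t(x,\cdot)$ through a duality argument. The Green function is symmetric in $(x,y)$, since $\Delta_{g_t}$ is self-adjoint with respect to $\mu_t$. Hence for any $y$ I may view $G_t(x,y)=G_y(x)$. Fix $y_0$ and a small ball $B=B_{g_t}(y_0,r)$, and let $h_r\ge0$ be a smooth approximation of $\mathbf 1_{B}/\mu_t(B)$. Solve
\[
\Delta_{g_t}u=-2\left(h_r-V_t^{-1}\right),\qquad \int_Xu\,\mu_t=0,
\]
so that $u(x)=2\int_X G_x h_r\,\mu_t$ by \eqref{eq:real-green-formula}. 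A lower bound $u\ge -C$ independent of $r$ then gives $G_t(x,y_0)\ge -C/2$ after letting $r\to0$. So the task is a uniform lower bound for $u$, equivalently an upper bound for $-u$.

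Using \cref{lem:real-complex-laplacian}, $v:=-u$ satisfies
\[
\Delta_{\omega_t}v=h_r-V_t^{-1}+\langle\nabla u,\theta_t^\#\rangle_{\omega_t}\ge -V_t^{-1}-F,
\]
where $F$ is a smooth nonnegative majorant of $|\nabla u|_{\omega_t}\Theta_t$ (for instance $(|\nabla u|^2_{\omega_t}+e^{-4t})^{1/2}\Theta_t$). Since $h_r\ge0$, this is exactly the setting of \cref{lem:drifted-laplacian-estimate} with $a=V_t^{-1}$. It gives
\[
\sup_X v\le C\Bigl(1+\int_X|u|\,\mu_t+D_t(F)\Bigr).
\]
The $L^1$ norm of $u$ is controlled by Fubini and \cref{prop:surface-L1-green}: $\int|u|\mu_t\le 2\int h_r(y)\int|G_y|\mu_t\,\mu_t(y)+C\le C$.

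The main obstacle is bounding $D_t(F)$ uniformly in $r$, since $h_r$ concentrates. I would use Cauchy--Schwarz as in \cref{lem:linear-drift-iteration}:
\[
P_t(F)\le\Bigl(\sup_z\int|G_z|\Theta_t^2\mu_t\Bigr)^{1/2}\Bigl(\sup_z\int|G_z||\nabla u|^2\mu_t\Bigr)^{1/2}+Ce^{-2t},
\]
where the first factor is $\le Ce^{-t/2}$ by \eqref{eq:Theta-properties}. For the weighted energy I apply \cref{lem:weighted-energy-identity}:
\[
\int|G_z||\nabla u|^2\mu_t=-\int|G_z|u\Delta_{g_t}u\,\mu_t+\tfrac12\int u^2\Delta_{g_t}|G_z|\,\mu_t .
\]
This is bounded by $C\|u\|_{L^\infty}\bigl(\int|G_z|h_r\mu_t+1\bigr)+C\|u\|_\infty^2$. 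The danger is that $\int|G_z|h_r$ is not uniformly bounded as $r\to0$ when $z$ is near $y_0$, and $u$ itself is not bounded above. To bypass this, I would not take $h_r$ concentrating. Instead I would use the cutoff trick of \cite{GPSS24a}: prove the lower bound only for the truncated quantity, i.e. take $h=$ smooth approximation of $-\mathbf 1_{\{G_x<-k\}}$-type data and test with the negative part $G_-$, which is bounded in $L^1$ and whose Laplacian has no Dirac mass by \cref{lem:laplacian-abs-green}.

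Concretely, I would apply the maximum-principle argument to $v=G_-$ directly. Writing $G_x$ via \cref{lem:real-complex-laplacian}, on $\{G<0\}$ one has $\Delta_{\omega_t}G_-\ge -\tfrac1{2V_t}-|\nabla G|\Theta_t$. I then need $D_t(|\nabla G_x|\Theta_t)$ bounded, which, by the same Cauchy--Schwarz, reduces to $\sup_z\int|G_z||\nabla G_x|^2$ restricted to $\{G_x<0\}$. That quantity I would estimate by the weighted energy identity with $f=\min(G_x,0)$, using the measure bound of \cref{lem:laplacian-abs-green} and an a priori $\|G_-\|_\infty$ factor. This produces an inequality of the form $S\le C+Ce^{-t/2}S$ with $S=\sup G_-$, which is absorbed for $t$ large (after a standard regularization of $G_-$ near the level set, and working first with $\max(G_-,\cdot)$ truncations so that $S<\infty$).
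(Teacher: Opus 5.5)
Your final paragraph is essentially the paper's proof. The paper applies \cref{lem:drifted-laplacian-estimate} to a smooth convex regularization $u_\varepsilon=\beta_\varepsilon(-G_x)$ of $G_-$, which vanishes near the pole, with drift majorant $F=(|\nabla u_\varepsilon|^2_{\omega_t}+\delta^2)^{1/2}\Theta_t$. It bounds $D_t(F)\le Ce^{-t/2}(M_x+1+\delta)$ by Cauchy--Schwarz against \eqref{eq:Theta-properties} and the weighted energy identity, then absorbs the $e^{-t/2}M_x$ term. In that identity the first term is controlled by $-\Delta_{g_t}u_\varepsilon\le C$, i.e.\ by dropping the nonnegative convexity term rather than by the mass of $\nu_-$. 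The second term is controlled by the total variation bound on $\Delta_{g_t}|G_z|$.

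The opening duality detour with concentrating $h_r$ can be dropped, and so can the extra truncations: $M_x=-\inf G_x$ is already finite for each fixed $t$ and $x$.
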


\begin{proof}
It is enough to consider all sufficiently large $t$, since the estimate on any
bounded time interval follows from standard elliptic theory. Fix $x\in X$ and set
$G_x:=G_t(x,\cdot)$ and $s:=-G_x$. Since $G_x(y)\to+\infty$ as $y\to x$, we have
$s\to-\infty$ near $x$.

Choose smooth convex functions $\beta_\varepsilon:\mathbb R\to\mathbb R_{\ge0}$
such that $0\le\beta_\varepsilon'\le1$, $\beta_\varepsilon''\ge0$,
$\beta_\varepsilon(r)=0$ for $r\le\varepsilon$, and
$0\le r_+-\beta_\varepsilon(r)\le C\varepsilon$. Put
$u_\varepsilon:=\beta_\varepsilon(s)$. Then $u_\varepsilon$ is smooth on $X$,
because it vanishes near the pole. Moreover,
$0\le u_\varepsilon\le s_+\le |G_x|$, and hence
\[
\int_Xu_\varepsilon\,\mu_t\le C
\]
by \cref{prop:surface-L1-green}.

Away from the pole, $\Delta_{g_t}s=-V_t^{-1}$. Using
$\Delta_{g_t}f=2\Delta_{\omega_t}f+
2\langle\nabla f,\theta_t^\#\rangle_{\omega_t}$ and the chain rule we can write
\[
\Delta_{\omega_t}u_\varepsilon
=
\beta_\varepsilon'(s)\Delta_{\omega_t}s
+\beta_\varepsilon''(s)|\partial s|_{\omega_t}^2
\ge
-C-|\nabla u_\varepsilon|_{\omega_t}|\theta_t|_{\omega_t}.
\]
For $\delta>0$, define
\[
F_{\varepsilon,\delta}:=
\left(|\nabla u_\varepsilon|_{\omega_t}^2+\delta^2\right)^{1/2}\Theta_t,
\]
which is a smooth approximation of $|\nabla u_\varepsilon|_{\omega_t}|\theta_t|_{\omega_t}$. Since $\Theta_t\ge|\theta_t|_{\omega_t}$, we have
$\Delta_{\omega_t}u_\varepsilon\ge -C-F_{\varepsilon,\delta}$. By \cref{lem:drifted-laplacian-estimate},
\begin{equation}\label{eq:sup u_epsilon}
    \sup_Xu_\varepsilon
\le
C\left(1+\int_Xu_\varepsilon\,\mu_t+D_t(F_{\varepsilon,\delta})\right).
\end{equation}
Thus it remains to estimate $D_t(F_{\varepsilon,\delta})$.

First,
\[
\Delta_{g_t}u_\varepsilon
=
-\frac{\beta_\varepsilon'(s)}{V_t}
+\beta_\varepsilon''(s)|\nabla s|_{g_t}^2.
\]
Therefore,
\[
\int_X|\nabla u_\varepsilon|_{g_t}^2\,\mu_t
=
-\int_Xu_\varepsilon\Delta_{g_t}u_\varepsilon\,\mu_t
\le
C\int_Xu_\varepsilon\,\mu_t\le C.
\]
Using \cref{eq:Theta-properties}, we can further write
\[
m_t(F_{\varepsilon,\delta})\le Ce^{-t}(1+\delta).
\]
Next set
\[
M_x:=\sup_Xs_+=-\inf_{y\in X}G_t(x,y).
\]
For each $z\in X$, \cref{lem:weighted-energy-identity} gives
\[
\int_X|G_z|\,|\nabla u_\varepsilon|_{g_t}^2\,\mu_t
=
-\int_X|G_z|u_\varepsilon\Delta_{g_t}u_\varepsilon\,\mu_t
+\frac12\int_Xu_\varepsilon^2\Delta_{g_t}|G_z|\,\mu_t.
\]
Using \cref{prop:surface-L1-green} and the bound
\[
-\Delta_{g_t}u_\varepsilon
=
\frac{\beta_\varepsilon'(s)}{V_t}
-\beta_\varepsilon''(s)|\nabla s|_{g_t}^2
\le C,
\]
the first term is bounded above by
$C\int_X|G_z|u_\varepsilon\,\mu_t\le CM_x$. The second term is bounded by
$CM_x^2$ using \cref{lem:laplacian-abs-green}. Hence
\[
\sup_z\int_X|G_z|\,|\nabla u_\varepsilon|_{\omega_t}^2\,\mu_t
\le C(M_x+1)^2.
\]
By the Cauchy-Schwarz inequality and \eqref{eq:Theta-properties},
\[
P_t(F_{\varepsilon,\delta})
\le
\left(\sup_z\int_X|G_z|\Theta_t^2\,\mu_t\right)^{1/2}
\left(
\sup_z\int_X|G_z|\left(|\nabla u_\varepsilon|_{\omega_t}^2+\delta^2\right)\mu_t
\right)^{1/2}
\le
Ce^{-t/2}(M_x+1+\delta).
\]
Consequently,
\[
D_t(F_{\varepsilon,\delta})\le Ce^{-t/2}(M_x+1+\delta).
\]
Substituting this into \eqref{eq:sup u_epsilon} and using
$\int_Xu_\varepsilon\,\mu_t\le C$, we obtain
\[
\sup_Xu_\varepsilon\le C+Ce^{-t/2}(M_x+1+\delta).
\]
Letting $\delta\to0$ and then $\varepsilon\to0$ gives
\[
M_x\le C+Ce^{-t/2}(M_x+1).
\]
For all sufficiently large $t$, the term $Ce^{-t/2}M_x$ is absorbed into the
left-hand side, and so $M_x\le C$. This is equivalent to
\[
\inf_{y\in X}G_t(x,y)\ge -C,
\]
the proof is therefore concluded.
\end{proof}

\subsection{$L^{1+\varepsilon}$-estimate for the Green function}
\begin{proposition}
\label{prop:surface-Lp-green}
There exist uniform constants $\varepsilon>0$ and $C>0$ such that
\[
\sup_{x\in X}\int_X\mathcal G_t(x,y)^{1+\varepsilon}\,\omega_t^2(y)\le C,
\]
where
\[
\mathcal G_t(x,y):=G_t(x,y)-\inf_{w\in X}G_t(x,w)+V_t^{-1}>0.
\]
\end{proposition}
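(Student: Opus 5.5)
We follow the Guo--Phong--Song--Sturm scheme as adapted in \cite{GPSS24a,Sun26}: test $\mathcal G_x:=\mathcal G_t(x,\cdot)$ against a power of itself via an auxiliary complex Monge--Amp\`ere equation. The drift terms from the torsion are absorbed using \cref{lem:drifted-laplacian-estimate} and \eqref{eq:Theta-properties}.

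Fix $x$. By \cref{prop:surface-L1-green} and \cref{prop:surface-inf-green}, $\mathcal G_x>0$, $\int_X\mathcal G_x\,\mu_t\le C$, and $\Delta_{g_t}\mathcal G_x=V_t^{-1}$ away from $x$. For $k\ge1$ we truncate, $\mathcal G_k:=\min(\mathcal G_x,k)$, so all quantities are finite. It then suffices to bound $N_k:=\int_X\mathcal G_k^{1+\varepsilon}\mu_t$ uniformly in $k$ and $t$. Set
\[
H_k:=\mathcal G_k^{\varepsilon}\Big/\Big(V_t^{-1}\int_X\mathcal G_k^{\varepsilon}\mu_t\Big).
\]
Solve the auxiliary equation
\[
(\hat\omega_t+\sqrt{-1}\partial\bar\partial\psi_k)^2=c_kH_k\,\mu_t,\qquad \sup_X\psi_k=0.
\]
The density $H_k\,\mu_t/\omega_X^2$ has $L\log^pL$ norm controlled by $N_k^{\varepsilon'}$ for small $\varepsilon$, via H\"older and $\mu_t\le C\omega_X^2$. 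So \cref{a priori in nef class for MA}, applied with $\beta=\chi$ and $\hat\omega_t\le\chi+e^{-t}\omega_0$, gives $\|\psi_k\|_{L^\infty}\le C(1+N_k^{\delta})$ with $\delta$ small. The normalizing constant $c_k$ is bounded above and below, as in \cite{Sun26}.

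Next we compare $\psi_k$ with $\mathcal G_k$ through the Green formula \eqref{eq:real-green-formula}. Writing $\Delta_{g_t}=2\Delta_{\omega_t}+2\langle\nabla\cdot,\theta_t^\#\rangle$, we get
\[
\int_X\mathcal G_x(-\Delta_{\omega_t}\psi_k)\mu_t=\psi_k(x)-\bar\psi_k-\int_X G_x\langle\nabla\psi_k,\theta_t^\#\rangle\mu_t .
\]
The arithmetic-geometric mean inequality gives $\Delta_{\omega_t}\psi_k\ge 2(c_kH_k)^{1/2}-\operatorname{tr}_{\omega_t}\hat\omega_t$. Moreover $\int_X\mathcal G_x\operatorname{tr}_{\omega_t}\hat\omega_t\,\mu_t\le C$ by \eqref{eq:M,K,A} together with the lower bound on $G$. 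These combine to give
\[
\int_X\mathcal G_x H_k^{1/2}\mu_t\le C\big(1+\|\psi_k\|_\infty+T\big),
\]
where $T=\int|G_x||\nabla\psi_k||\theta_t|\mu_t$. We control $T$ as in the proof of \cref{prop:surface-L1-green}. The weighted energy identity \cref{lem:weighted-energy-identity}, with \cref{lem:laplacian-abs-green}, gives $\int|G_x||\nabla\psi_k|^2\le C\|\psi_k\|_\infty(1+\int|G_x||\Delta_{g_t}\psi_k|)+C\|\psi_k\|_\infty^2$. Here $|\Delta_{\omega_t}\psi_k|\le \Delta_{\omega_t}\psi_k+2\operatorname{tr}_{\omega_t}\hat\omega_t$, and Cauchy--Schwarz with $\kappa_t\le Ce^{-t}$ absorbs the drift. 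This yields $T\le Ce^{-t/2}(1+\|\psi_k\|_\infty)$.

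Since $\mathcal G_xH_k^{1/2}\ge c\,\mathcal G_k^{1+\varepsilon/2}/(\int\mathcal G_k^\varepsilon)^{1/2}$ and $\int\mathcal G_k^{\varepsilon}\le C$, we obtain $\int\mathcal G_k^{1+\varepsilon/2}\le C(1+N_k^{\delta})$. To close the argument, we either run the scheme with exponent $2\varepsilon$ in $H_k$ and use interpolation, or more directly raise the Monge--Amp\`ere density to $\mathcal G_k^{2\varepsilon}$ so that $\sqrt{H_k}$ produces $\mathcal G_k^{\varepsilon}$. Then $N_k\le C(1+N_k^{\delta})$ with $\delta<1$, which bounds $N_k$ uniformly; letting $k\to\infty$ gives the proposition. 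An alternative to the AM--GM step is the route of \cite{GPS24}: solve $-\Delta_{\omega_t}$-type comparisons $\Delta_{\omega_t}(\psi_k-\Lambda\mathcal G_k^{...})\ge\ldots$ on superlevel sets and apply \cref{lem:drifted-laplacian-estimate}, where $F$ is the drift $|\nabla\cdot||\theta_t|$, whose $D_t$-norm is $O(e^{-t/2})$ by the same Cauchy--Schwarz argument as in \cref{prop:surface-inf-green}.

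The main obstacle is the drift term from the torsion. It is the \emph{unweighted-gradient} Green-weighted torsion integral of $\psi_k$. Controlling it needs the weighted energy of $\psi_k$, and that in turn requires $\int|G_x|\operatorname{tr}_{\omega_t}\hat\omega_t\le C$ and an $L^\infty$ bound on $\psi_k$ polynomial in $N_k$ with small exponent. Tracking the exponents so the bootstrap closes, with $\delta<1$, is where care is needed. Bounded time intervals are handled by standard elliptic theory, as before.
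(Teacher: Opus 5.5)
Your argument is correct in substance, but it follows a different route from the paper.

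\textbf{The paper's route.} The paper never applies Green's formula to the Monge--Amp\`ere potential. It introduces the Green potential $u_k(z)=2\int_XG_zh_k\,\mu_t$ of $h_k=\chi_k(\mathcal G_x)^\varepsilon$. It then bounds $U_k=\sup_Xu_k$ by a maximum-principle comparison of $v_k=(\psi_{t,k}-\varphi_t)+\delta_0u_k$. The drift $\delta_0\langle\nabla u_k,\theta_t^\#\rangle_{\omega_t}$ forces the use of \cref{lem:drifted-laplacian-estimate}, hence of the linear iteration \cref{lem:linear-drift-iteration}. The loop closes through $D_t(F_{k,\eta})\le Ce^{-t/2}(U_k+1)$, and only at the very end is $u_k$ evaluated at the pole.

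\textbf{Your route.} You evaluate Green's formula for $\psi_k$ itself at the pole. Then $\int_X\mathcal G_x\operatorname{tr}_{\omega_t}(\hat\omega_t+\sqrt{-1}\partial\bar\partial\psi_k)\,\mu_t$ is controlled by $\|\psi_k\|_{L^\infty}$, by $M\le C$ from \eqref{eq:M,K,A}, and by a single drift $T$. That drift is absorbed exactly as $P$ was for $\varphi_t$ in \cref{prop:surface-L1-green}. The step you leave implicit does go through. Set $E_\psi:=\int_X|G_x||\nabla\psi_k|^2_{\omega_t}\mu_t$ and write
\[
\int_X|G_x|\Delta_{\omega_t}\psi_k\,\mu_t=\tfrac12\langle\Delta_{g_t}|G_x|,\psi_k\rangle-\int_X|G_x|\langle\nabla\psi_k,\theta_t^\#\rangle_{\omega_t}\mu_t .
\]
Combined with your bound $|\Delta_{\omega_t}\psi_k|\le\Delta_{\omega_t}\psi_k+2\operatorname{tr}_{\omega_t}\hat\omega_t$, this gives $E_\psi\le C\|\psi_k\|_{L^\infty}(\|\psi_k\|_{L^\infty}+M+T+1)$. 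Then $T^2\le\kappa_tE_\psi$ yields $T\le Ce^{-t/2}(1+\|\psi_k\|_{L^\infty})$.

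\textbf{Comparison.} Your route is shorter. At this step it dispenses with the real Poisson equation, $U_k$, $\delta_0$ and \cref{lem:drifted-laplacian-estimate}. You still depend on them indirectly through \cref{prop:surface-inf-green}, which you need for $A_x\le C$ and $\int_X\mathcal G_x\mu_t\le C$. The paper's route keeps the Monge--Amp\`ere potential at the $C^0$ level and places all gradient and torsion estimates on the solution of a linear equation. It also reuses the same drifted-Laplacian machinery as \cref{prop:surface-inf-green}.

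\textbf{Bookkeeping.} Green's formula gives $\frac12(\psi_k(x)-\bar\psi_k)$, and the drift term comes with the opposite sign. Replacing $G_x$ by $\mathcal G_x=G_x+A_x$ adds $A_x\int_X\Delta_{\omega_t}\psi_k\,\mu_t=2A_xe^{-t}\int_X\psi_k\,\sqrt{-1}\partial\bar\partial\omega_0$. This term is nonzero in the Hermitian setting but is only $O(e^{-t}\|\psi_k\|_{L^\infty})$.

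\textbf{The closing bootstrap.} It is unnecessary, and as written it is not justified. Theorem~\ref{a priori in nef class for MA} gives no explicit polynomial dependence on the entropy, so $\|\psi_k\|_{L^\infty}\le C(1+N_k^\delta)$ does not follow from it. You do not need it:
\begin{itemize}
\item Since $\int_X\mathcal G_k\,\mu_t\le C$ and $\mathcal G_k\ge c>0$, your normalized density satisfies $\int_XH_k^{1/\varepsilon}\mu_t\le C$ uniformly.
\item Hence the entropy is uniformly bounded and $\|\psi_k\|_{L^\infty}\le C$. This is exactly as in Step 2 of the paper, where the density $(H_k^{2\varepsilon}+1)/B_k$, with $H_k=\chi_k(\mathcal G_x)$ and $2\varepsilon<1$, is chosen for this reason.
\item Your inequality then reads $\int_X\mathcal G_k^{1+\varepsilon/2}\mu_t\le C$ directly. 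Monotone convergence gives the proposition with exponent $\varepsilon/2$, with no interpolation and no absorption in $N_k$.
\end{itemize}
Finally, use a smooth truncation such as the paper's $\chi_k$ rather than $\min(\mathcal G_x,k)$, so that the Monge--Amp\`ere equation has smooth data.
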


\begin{proof}
It suffices to prove the estimate for all sufficiently large $t$. Fix $x\in X$ and write
$G_x=G_t(x,\cdot)$, $\mathcal G_x=\mathcal G_t(x,\cdot)$ and $\mu_t=\omega_t^2$.
By \cref{prop:surface-L1-green} and \cref{prop:surface-inf-green},
\[
\int_X\mathcal G_x\,\mu_t\le C.
\]
Choose $0<\varepsilon<1/2$. Let $\chi_k:\mathbb R_{\ge0}\to\mathbb R_{\ge0}$ be
smooth nondecreasing functions such that $\chi_k(s)=s$ for $s\le k$,
$\chi_k(s)=k+1$ for $s\ge k+2$, $\chi_k(s)\le s$, and $\chi_k(s)\nearrow s$.
Set $H_k:=\chi_k(\mathcal G_x)$ and $h_k:=H_k^\varepsilon$. Since $H_k$ is
constant near the pole $x$, it is smooth on $X$. Moreover, $0<H_k\le\mathcal G_x$,
$H_k\nearrow\mathcal G_x$, and
\[
\int_Xh_k\,\mu_t\le C.
\]
We also fix the regularized torsion norm $\Theta_t:=\left(|\theta_t|_{\omega_t}^2+e^{-4t}\right)^{1/2}$.

\smallskip
\textbf{Step 1: the real Poisson equation and the estimate of drift terms.}
Let $u_k$ solve the real Poisson equation
\[
\Delta_{g_t}u_k=2(-h_k+\bar h_k),\qquad
\int_Xu_k\,\mu_t=0,\qquad
\bar h_k=V_t^{-1}\int_Xh_k\,\mu_t.
\]
Green's formula gives $u_k(z)=2\int_XG_t(z,y)h_k(y)\,\mu_t(y)$. Since
$G_t(z,\cdot)\ge -C$ and $\int_Xh_k\,\mu_t\le C$, we have $u_k\ge -C$, and hence
\[
\int_X(u_k)_+\,\mu_t\le C.
\]
Let $U_k:=\sup_Xu_k$. From $|G_t(z,y)|\le G_t(z,y)+C$ and the formula above,
\[
P_t(h_k)=\sup_z\int_X|G_z|h_k\,\mu_t\le C(U_k+1).
\]
Thus \cref{lem:real-poisson-green-norm} gives
\[
\sup_z\int_X|G_z|\,|\nabla u_k|_{\omega_t}^2\,\mu_t
+
\int_X|\nabla u_k|_{\omega_t}^2\,\mu_t
\le C(U_k+1)^2.
\]

For $\eta>0$, define smooth functions
\[
F_{k,\eta}:=\delta_0
\left(|\nabla u_k|_{\omega_t}^2+\eta^2\right)^{1/2}\Theta_t,
\]
where $0<\delta_0$ will be fixed below. We record the uniform estimate
\begin{equation}\label{eq:Dt F keta}
    D_t(F_{k,\eta})\le Ce^{-t/2}(U_k+1+\eta).
\end{equation}
Indeed, using $\sqrt{a^2+\eta^2}\le a+\eta$, \eqref{eq:Theta-properties}, the Cauchy-Schwarz inequality, and the estimates above,
\[
\begin{aligned}
m_t(F_{k,\eta})
&\le
C\left(\int_X|\nabla u_k|_{\omega_t}^2\,\mu_t\right)^{1/2}
\left(\int_X\Theta_t^2\,\mu_t\right)^{1/2}
+C\eta\int_X\Theta_t\,\mu_t  \\
&\le
Ce^{-t}(U_k+1)+C\eta e^{-t}
\le Ce^{-t/2}(U_k+1+\eta),
\end{aligned}
\]
and similarly 
\[
\begin{aligned}
P_t(F_{k,\eta})
&\le
C\left(
\sup_z\int_X|G_z|\,|\nabla u_k|_{\omega_t}^2\,\mu_t
\right)^{1/2}
\left(
\sup_z\int_X|G_z|\Theta_t^2\,\mu_t
\right)^{1/2}  \\
&\quad
+C\eta
\left(
\sup_z\int_X|G_z|\,\Theta_t^2\,\mu_t
\right)^{1/2}
\left(
\sup_z\int_X|G_z|\,\mu_t
\right)^{1/2}  \\
&\le
Ce^{-t/2}(U_k+1+\eta).
\end{aligned}
\]
\smallskip
\textbf{Step 2: the uniform upper bound of $u_k$.}
Solve
\begin{equation}\label{eq:auxiliary MA}
    (\hat\omega_t+\sqrt{-1}\partial\bar\partial\psi_{t,k})^2
=
c_k\frac{H_k^{2\varepsilon}+1}{B_k}\,\omega_t^2,\qquad
\sup_X\psi_{t,k}=0,
\end{equation}
where $B_k=\int_X(H_k^{2\varepsilon}+1)\,\mu_t$ and the dependence of $c_k$ on $t$ was omitted. The existence of such solutions is guaranteed by the main theorem of \cite{Yau78, TW10}. Since $2\varepsilon<1$ and
$\int_X\mathcal G_x\,\mu_t\le C$, the quantities $B_k$ are uniformly bounded
above and below. The constants $c_k$ are uniformly bounded above and below by the
bounded mass and positive volume properties. Indeed, thanks to the fact that $X$ is K\"ahler, integrating both sides of \eqref{eq:auxiliary MA} we obtain
\[
0<\underline{\text{Vol}}(\chi)\le c_k\le\overline{\text{Vol}}(A\omega_0)<+\infty.
\]
As in \cite[Lemma 5.3]{Sun26}, one can check that the quantity 
$$
\int_X\frac{H_{t,k}^{2\varepsilon}+1}{B_k}e^{F_t}\log\left(1+\frac{H_{t,k}^{2\varepsilon}+1}{B_k}e^{F_t}\right)^p\omega_X^2,
$$
is uniformly bounded from above. Hence the uniform $L^\infty$ estimate (cf. \cite[Theorem 15.4]{PSWZ25}) gives
\[
\|\psi_{t,k}-\varphi_t\|_{L^\infty(X)}\le C.
\]
By the arithmetic-geometric mean inequality,
\[
\operatorname{tr}_{\omega_t}(\hat\omega_t+\sqrt{-1}\partial\bar\partial\psi_{t,k})
\ge
2\left(\frac{c_k}{B_k}\right)^{1/2}(H_k^{2\varepsilon}+1)^{1/2}
\ge a_0h_k
\]
for a uniform $a_0>0$. Hence
\[
\Delta_{\omega_t}(\psi_{t,k}-\varphi_t)\ge a_0h_k-C.
\]
Fix $0<\delta_0<a_0/2$ and set
\[
v_k:=
(\psi_{t,k}-\varphi_t)
-\frac1{V_t}\int_X(\psi_{t,k}-\varphi_t)\,\mu_t
+\delta_0u_k.
\]
Since $\Delta_{\omega_t}u_k
=
-h_k+\bar h_k-\langle\nabla u_k,\theta_t^\#\rangle_{\omega_t}$, we obtain
\[
\Delta_{\omega_t}v_k
\ge
(a_0-\delta_0)h_k-C
-\delta_0\langle\nabla u_k,\theta_t^\#\rangle_{\omega_t}
\ge
-C-F_{k,\eta},
\]
where the last inequality uses $\Theta_t\ge|\theta_t|_{\omega_t}$ and the definition
of $F_{k,\eta}$. Applying \cref{lem:drifted-laplacian-estimate} to $v_k$ gives
\[
\sup_Xv_k\le
C\left(1+\int_X(v_k)_+\,\mu_t+D_t(F_{k,\eta})\right).
\]
Since $\|\psi_{t,k}-\varphi_t\|_{L^\infty}\le C$ and
$\int_X(u_k)_+\,\mu_t\le C$, we have $\int_X(v_k)_+\,\mu_t\le C$. Therefore, by \eqref{eq:Dt F keta} we have
\[
\sup_Xv_k\le C+Ce^{-t/2}(U_k+1+\eta).
\]
Letting $\eta\to0$ yields
\[
\sup_Xv_k\le C+Ce^{-t/2}(U_k+1).
\]
Let $z_k$ be a maximum point of $u_k$. Since $\psi_{t,k}-\varphi_t$ and its average
are uniformly bounded, $v_k(z_k)\ge\delta_0U_k-C$. Hence
\[
\delta_0U_k-C\le C+Ce^{-t/2}(U_k+1).
\]
For all sufficiently large $t$, the term $Ce^{-t/2}U_k$ is absorbed into the
left-hand side, and we obtain
\[
U_k\le C.
\]
\smallskip
\textbf{Step 3: conclusion.}
By Green's formula,
\[
u_k(x)=2\int_XG_xh_k\,\mu_t.
\]
Write $G_x=\mathcal G_x-A_x$, where $A_x=-\inf_yG_x(y)+V_t^{-1}\le C$. Then
\[
u_k(x)
=
2\int_X\mathcal G_xH_k^\varepsilon\,\mu_t
-
2A_x\int_XH_k^\varepsilon\,\mu_t.
\]
The second term is bounded below uniformly, because
$A_x\le C$ and $\int_XH_k^\varepsilon\,\mu_t=\int_Xh_k\,\mu_t\le C$. Since
$u_k(x)\le U_k\le C$, it follows that
\[
\int_X\mathcal G_xH_k^\varepsilon\,\mu_t\le C.
\]
Letting $k\to\infty$, we have $H_k\nearrow\mathcal G_x$, and the monotone
convergence theorem gives
\[
\int_X\mathcal G_x^{1+\varepsilon}\,\mu_t\le C.
\]
The estimate is uniform in $x$, and the proof is complete.
\end{proof}

\section{End of proof of the diameter and volume estimates}\label{section 5}

We now finish the proof of the diameter estimate and the volume non-collapsing
estimate. At this stage the argument is purely Riemannian, following almost word for word from
\cite{GPSS24a, Sun26}. 

First we record the gradient estimate for the Green function.

\begin{lemma}\label{lem:gradient-estimate-green}
Let $\varepsilon>0$ be the constant in \cref{prop:surface-Lp-green}. Then for any
\[
1<s<\frac{2+2\varepsilon}{2+\varepsilon},
\]
there exists a uniform constant $C_s>0$ such that
\[
\sup_{x\in X}\int_X|\nabla_yG_t(x,y)|_{g_t}^s\,\mu_t(y)\le C_s.
\]
\end{lemma}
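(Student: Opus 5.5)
The plan is the standard GPSS argument: weight the gradient by a power of $\mathcal G_x$, control the weighted Dirichlet energy with an integration-by-parts identity, and then remove the weight with H\"older. Fix $x$ and write $\mathcal G=\mathcal G_t(x,\cdot)>0$. Since $\nabla_y G_t(x,y)=\nabla\mathcal G$ and $\Delta_{g_t}\mathcal G=V_t^{-1}$ away from the pole, the weight contributes no drift term. Only real Riemannian quantities appear, so the torsion plays no role here.

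\textbf{Step 1 (weighted energy).} For $\beta>0$, I will prove
\[
\int_X \frac{|\nabla \mathcal G|_{g_t}^2}{\mathcal G^{1+\beta}}\,\mu_t\le C_\beta.
\]
The test function is $\mathcal G^{-\beta}$, or more precisely a truncation $\min(\mathcal G,k)^{-\beta}$ to avoid the pole. Integrate $\Delta_{g_t}\mathcal G\cdot\mathcal G^{-\beta}$ over $X$, including the Dirac mass at $x$. Because $\mathcal G^{-\beta}\to0$ at the pole, the delta contributes $0$ in the limit. This gives
\[
\beta\int_X\mathcal G^{-1-\beta}|\nabla\mathcal G|^2\mu_t=\int_X\langle\nabla\mathcal G,\nabla\mathcal G^{-\beta}\rangle(-1)\mu_t
=\frac1{V_t}\int_X\mathcal G^{-\beta}\mu_t-\lim(\text{pole term}),
\]
with the sign conventions of \eqref{eq:green-normalization}. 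The pole term is handled by the cutoff: on $\{\mathcal G>k\}$ the truncated function is constant, so its boundary flux is $k^{-\beta}\cdot(\text{flux of }\nabla G)=k^{-\beta}(1-V_t^{-1}\mu_t\{\mathcal G>k\})\to0$. Since $\mathcal G\ge V_t^{-1}\ge c$ (by the uniform lower bound on $V_t$), $\int\mathcal G^{-\beta}\mu_t\le C$, so the right side is uniformly bounded by $C/\beta$.

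\textbf{Step 2 (H\"older).} Split
\[
|\nabla\mathcal G|^s=\Big(\frac{|\nabla\mathcal G|^2}{\mathcal G^{1+\beta}}\Big)^{s/2}\mathcal G^{(1+\beta)s/2}.
\]
H\"older with exponents $2/s$ and $2/(2-s)$ gives
\[
\int_X|\nabla\mathcal G|^s\mu_t\le C_\beta^{s/2}\Big(\int_X\mathcal G^{\frac{(1+\beta)s}{2-s}}\mu_t\Big)^{\frac{2-s}{2}}.
\]
By \cref{prop:surface-Lp-green}, the last integral is bounded provided $\frac{(1+\beta)s}{2-s}\le 1+\varepsilon$. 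This condition is achievable for some $\beta>0$ exactly when $\frac{s}{2-s}<1+\varepsilon$, i.e. $s<\frac{2+2\varepsilon}{2+\varepsilon}$, which is the stated range. Choose $\beta=\beta(s)>0$ accordingly; all constants are then uniform in $t$ and $x$.

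\textbf{Main obstacle.} The only delicate point is justifying the integration by parts in Step 1 near the pole, i.e. showing that the boundary or Dirac contribution vanishes. I would handle it via the truncation $H_k=\chi_k(\mathcal G)$ used in \cref{prop:surface-Lp-green}, testing against $H_k^{-\beta}$, which is smooth. The chain rule then gives $\int \beta H_k^{-1-\beta}\chi_k'(\mathcal G)|\nabla\mathcal G|^2\mu_t=-\int H_k^{-\beta}\Delta_{g_t}\mathcal G\,\mu_t+(\text{Dirac term }(k+1)^{-\beta})$, which is $\le C$. Letting $k\to\infty$ by monotone convergence yields Step 1. Everything here is purely Riemannian and independent of torsion, as the section announces.
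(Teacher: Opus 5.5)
Your proposal is correct and follows the paper's proof essentially verbatim. You prove the weighted energy bound $\int_X\mathcal G^{-1-\beta}|\nabla\mathcal G|_{g_t}^2\mu_t\le C\beta^{-1}$ by testing against $\mathcal G^{-\beta}$, which vanishes at the pole, and then apply H\"older with exponents $2/s$, $2/(2-s)$ together with \cref{prop:surface-Lp-green}; your level-set truncation $H_k=\chi_k(\mathcal G)$ is only a different justification of the integration by parts than the paper's excision of a small ball. Two harmless slips: $V_t^{-1}\ge c$ uses the upper bound $V_t\le C$ (from $\omega_t^2\le C\omega_X^2$), not the lower one, and the truncated identity should read $\beta\int_X H_k^{-1-\beta}\chi_k'(\mathcal G)|\nabla\mathcal G|_{g_t}^2\mu_t=V_t^{-1}\int_X H_k^{-\beta}\mu_t-(k+1)^{-\beta}$, which is still $\le C$.
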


\begin{proof}
Fix $x\in X$ and write
\[
\mathcal G_x(y):=\mathcal G_t(x,y)
=
G_t(x,y)-\inf_{w\in X}G_t(x,w)+V_t^{-1}>0.
\]
Then $\mathcal G_x\ge V_t^{-1}\ge C^{-1}$ and
\(\nabla_y\mathcal G_x=\nabla_yG_t(x,y)\). We first prove that for every $\beta>0$,
\[
\int_X
\frac{|\nabla_yG_t(x,y)|_{g_t}^2}{\mathcal G_x(y)^{1+\beta}}\,\mu_t(y)
\le C\beta^{-1}.
\]
Indeed, since
\[
\Delta_{g_t}\mathcal G_x\,\mu_t
=
-\delta_x+\frac1{V_t}\mu_t
\]
and $\mathcal G_x^{-\beta}(x)=0$, integration by parts gives
\[
\begin{aligned}
\beta\int_X\mathcal G_x^{-1-\beta}|\nabla\mathcal G_x|_{g_t}^2\,\mu_t
&=
-\int_X\langle\nabla(\mathcal G_x^{-\beta}),\nabla\mathcal G_x\rangle_{g_t}\,\mu_t  \\
&=
\int_X\mathcal G_x^{-\beta}\Delta_{g_t}\mathcal G_x\,\mu_t
=
\frac1{V_t}\int_X\mathcal G_x^{-\beta}\,\mu_t
\le C.
\end{aligned}
\]
The integration by parts can be justified by cutting off a small ball around the pole and
then letting its radius tend to zero, as was done in \cite{GPSS24a} and \cite{Sun26}.

Now choose $\beta>0$ such that
\[
\frac{(1+\beta)s}{2-s}=1+\varepsilon.
\]
This is possible precisely when
\[
s<\frac{2+2\varepsilon}{2+\varepsilon}.
\]
By H\"older's inequality and \cref{prop:surface-Lp-green},
\[
\begin{aligned}
\int_X|\nabla G_t(x,\cdot)|_{g_t}^s\,\mu_t
&=
\int_X
\left(
\frac{|\nabla G_t(x,\cdot)|_{g_t}^2}{\mathcal G_x^{1+\beta}}
\right)^{s/2}
\mathcal G_x^{(1+\beta)s/2}\,\mu_t  \\
&\le
\left(
\int_X
\frac{|\nabla G_t(x,\cdot)|_{g_t}^2}{\mathcal G_x^{1+\beta}}\,\mu_t
\right)^{s/2}
\left(
\int_X
\mathcal G_x^{\frac{(1+\beta)s}{2-s}}\,\mu_t
\right)^{(2-s)/2} \\
&\le C_s.
\end{aligned}
\]
The estimate is uniform in $x$ and $t$.
\end{proof}

\begin{theorem}\label{thm:diameter-volume-estimates}
There exist uniform constants $C>0$, $c>0$, $\alpha>0$ and $r_0>0$ such that
\[
\operatorname{diam}(X,\omega_t)\le C
\]
and
\[
\operatorname{Vol}_{\omega_t}\big(B_{\omega_t}(x,r)\big)\ge cr^\alpha
\]
for all $x\in X$, $0<r<r_0$, and all $t\ge0$.
\end{theorem}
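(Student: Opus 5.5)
The plan follows the Green-function approach of \cite{GPSS24a, Sun26}, with \cref{lem:gradient-estimate-green} as the main input. On a bounded time interval $[0,T_0]$ both estimates are trivial, since the metrics $\omega_t$ are uniformly equivalent to $\omega_0$. So we only consider large $t$.

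\textbf{Diameter.} Fix $x_0\in X$ and let $\rho(y):=d_{g_t}(x_0,y)$. This function is Lipschitz with $|\nabla\rho|_{g_t}\le1$ almost everywhere. We apply the Green representation \eqref{eq:real-green-formula} to $\rho$, after a standard smoothing of $\rho$, at two arbitrary points $x,x'$. Integrating by parts moves one derivative onto the Green function:
\[
\rho(x)-\frac1{V_t}\int_X\rho\,\mu_t=\int_X\langle\nabla_yG_t(x,y),\nabla\rho\rangle_{g_t}\,\mu_t(y).
\]
Hence
\[
|\rho(x)-\overline{\rho}|\le\int_X|\nabla_yG_t(x,y)|\,\mu_t\le C,
\]
by \cref{lem:gradient-estimate-green} (with $s>1$) and H\"older's inequality, using that $V_t$ is uniformly bounded. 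Taking $x=x_0$ gives $\overline\rho\le C$, so $\rho(x)\le 2C$ for every $x$, and thus $\operatorname{diam}(X,\omega_t)\le 4C$. The cut-off near the pole in the integration by parts is justified exactly as in the proof of \cref{lem:gradient-estimate-green}.

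\textbf{Volume non-collapsing.} Fix $x$ and $r<r_0$. Take a cutoff $\eta(y)=\phi(d_{g_t}(x,y)/r)$, where $\phi\equiv1$ on $[0,1/2]$, $\phi\equiv0$ on $[1,\infty)$ and $|\phi'|\le3$. Then $\eta(x)=1$, $\operatorname{supp}\eta\subset B:=B_{\omega_t}(x,r)$ and $|\nabla\eta|\le 3/r$. The Green formula gives
\[
1=\eta(x)\le \frac1{V_t}\int_X\eta\,\mu_t+\frac3r\int_{B}|\nabla_yG_t(x,y)|\,\mu_t.
\]
By H\"older's inequality with exponent $s$ from \cref{lem:gradient-estimate-green},
\[
1\le C\operatorname{Vol}(B)+\frac{C}{r}\operatorname{Vol}(B)^{1-1/s}.
\]
If $\operatorname{Vol}(B)\le c r^{\alpha}$ with $\alpha=s/(s-1)$ and $c$ small, both terms on the right are less than $1/2$ for $r<r_0\le 1$. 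This is a contradiction, so $\operatorname{Vol}(B)\ge cr^{\alpha}$.

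\textbf{Main obstacle.} The main difficulty is the rigorous justification of the Green representation for the merely Lipschitz functions $\rho$ and $\eta$, including the integration by parts across the pole. We handle this by approximating with smooth functions whose gradients are uniformly bounded by $1+\delta$ (respectively $3/r+\delta$) and converge almost everywhere, and by using the uniform $L^s$ bound on $\nabla G$ together with dominated convergence. Everything else follows directly from \cref{lem:gradient-estimate-green}; in particular, no torsion terms appear, because only the real Laplacian is used.
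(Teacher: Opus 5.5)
Your proof is correct and follows the paper's route: the diameter argument is the same (Green's formula for the smoothed distance function, then the $L^s$ gradient bound of \cref{lem:gradient-estimate-green} with H\"older). The volume bound also rests only on Green's formula, H\"older and that lemma, with one small difference: you test the cutoff $\eta$ itself at the pole $x$ (gradient $O(r^{-1})$, average bounded by $\operatorname{Vol}(B)/V_t$), while the paper tests $d_x\eta$ (gradient $O(1)$) at two auxiliary points $z\notin B_{g_t}(x,r)$ and $\hat z\in\partial B_{g_t}(x,r/2)$. Your version is slightly more economical, since it needs no case distinctions and $\nabla\eta\equiv0$ near the pole, and it gives the same exponent $\alpha=s/(s-1)$.
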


\begin{proof}
It is enough to prove the estimates for all sufficiently large $t$, since on every bounded
time interval the metrics are uniformly smooth equivalent.

We first prove the diameter bound. Fix $x_0\in X$ and set
$d(y):=d_{g_t}(x_0,y)$. The function $d$ is Lipschitz and $|\nabla d|_{g_t}\le1$
a.e. Applying Green's formula by a standard smooth approximation of $d$, we get
\[
0=d(x_0)=\frac1{V_t}\int_Xd\,\mu_t
+
\int_X\langle\nabla_yG_t(x_0,y),\nabla d(y)\rangle_{g_t}\,\mu_t(y).
\]
Therefore, by \cref{lem:gradient-estimate-green},
\[
\frac1{V_t}\int_Xd\,\mu_t
\le
\int_X|\nabla_yG_t(x_0,y)|_{g_t}\,\mu_t(y)
\le C.
\]
For any $y_0\in X$, applying Green's formula to $d$ at $y_0$ gives
\[
\begin{aligned}
d_{g_t}(x_0,y_0)
&=
\frac1{V_t}\int_Xd\,\mu_t
+
\int_X\langle\nabla_yG_t(y_0,y),\nabla d(y)\rangle_{g_t}\,\mu_t(y) \\
&\le
C+\int_X|\nabla_yG_t(y_0,y)|_{g_t}\,\mu_t(y)
\le C.
\end{aligned}
\]
Taking the supremum over $x_0,y_0\in X$ proves the diameter estimate.

We next prove the volume non-collapsing estimate. Fix
\[
1<s<\frac{2+2\varepsilon}{2+\varepsilon}
\]
and set
\[
\alpha:=\frac{s}{s-1}.
\]
Let $x\in X$ and $0<r<r_0$, where $r_0$ is chosen small and fixed. If
$B_{g_t}(x,r)=X$, the desired estimate follows from the uniform lower bound of $V_t$.
Otherwise, choose $z\in X\setminus B_{g_t}(x,r)$. Let $\eta$ be a Lipschitz cutoff function
supported in $B_{g_t}(x,r)$, satisfying $\eta\equiv1$ on $B_{g_t}(x,r/2)$ and
$|\nabla\eta|_{g_t}\le Cr^{-1}$. Set $d_x(y):=d_{g_t}(x,y)$.

Since $d_x\eta(z)=0$, Green's formula gives
\[
\frac1{V_t}\int_X d_x\eta\,\mu_t
=
-\int_X\langle\nabla_yG_t(z,y),\nabla(d_x\eta)(y)\rangle_{g_t}\,\mu_t(y).
\]
As $d_x\le r$ on the support of $\eta$ and $|\nabla d_x|\le1$ a.e.,
\[
|\nabla(d_x\eta)|_{g_t}\le C\mathbf 1_{B_{g_t}(x,r)}.
\]
Hence
\[
\frac1{V_t}\int_X d_x\eta\,\mu_t
\le
C\int_{B_{g_t}(x,r)}|\nabla_yG_t(z,y)|_{g_t}\,\mu_t(y)
\le
C\operatorname{Vol}_{\omega_t}(B_{g_t}(x,r))^{(s-1)/s},
\]
where we used \cref{lem:gradient-estimate-green} in the last inequality.

Choose $\hat z\in \partial B_{g_t}(x,r/2)$ when the boundary is nonempty. If it is empty,
then $B_{g_t}(x,r/2)=X$ and the desired estimate is trivial. Since
$d_x\eta(\hat z)=r/2$, another application of Green's formula gives
\[
\begin{aligned}
\frac r2
&=
d_x\eta(\hat z)  \\
&=
\frac1{V_t}\int_Xd_x\eta\,\mu_t
+
\int_X\langle\nabla_yG_t(\hat z,y),\nabla(d_x\eta)(y)\rangle_{g_t}\,\mu_t(y) \\
&\le
C\operatorname{Vol}_{\omega_t}(B_{g_t}(x,r))^{(s-1)/s}.
\end{aligned}
\]
Therefore
\[
\operatorname{Vol}_{\omega_t}(B_{g_t}(x,r))
\ge
cr^{s/(s-1)}
=
cr^\alpha.
\]
This proves the volume non-collapsing estimate.
\end{proof}

\section{Identification of the Gromov-Hausdorff limit}\label{section 6}

Let $E=\text{Null}(K_X)$ and let
\[
f:X\longrightarrow Y:=X_{\rm can}
\]
be the canonical map induced by the linear system $|mK_X$| for some large $m$. We denote by
\[
E_Y:=f(E)=Y_{sing}
\]
the finite set of orbifold points of $Y$ (cf. \cite{BHP04}). Let $\omega_{KE}$ be the orbifold
K\"ahler-Einstein metric on $Y$ and let $d_{can}$ be the induced metric completion
distance. By a result of Song \cite{Song14}, we already know that $\overline{(X\setminus E,\omega_{KE})}$ is homeomorphic to $X_{can}$.

As in \cite[Lemma 4.3]{LTZ26}, (see also \cite[Section 9]{Sun26}), using the uniform diameter estimate, the volume
non-collapsing estimate \cref{thm:diameter-volume-estimates}, and the bound $\omega_t^2\le C\Omega$ \cref{lem:basic-flow-estimates}, the proof of the
Gromov-Hausdorff convergence reduces to the following distance comparison: for
every $\varepsilon>0$ and all $p,q\in X\setminus\widetilde V_\varepsilon$,
\begin{equation}\label{eq:distance-comparison-needed}
d_{can}(f(p),f(q))
\le
d_t(p,q)+\Psi(t^{-1}\mid\varepsilon),
\end{equation}
where
\[
V_\varepsilon:=\{y\in Y:\ d_{can}(y,E_Y)<\varepsilon\},
\qquad
\widetilde V_\varepsilon:=f^{-1}(V_\varepsilon).
\]
Here and below $\Psi(t^{-1}\mid\varepsilon)\to0$ as $t\to\infty$ for fixed $\varepsilon$.

\begin{lemma}\label{lem:canonical-distance-upper}
For every fixed sufficiently small $\varepsilon>0$ and all
$p,q\in X\setminus\widetilde V_\varepsilon$, we have
\[
d_{can}(f(p),f(q))
\le
d_t(p,q)+\Psi(t^{-1}\mid\varepsilon)+\Psi(\varepsilon).
\]
Consequently, \eqref{eq:distance-comparison-needed} holds after letting
$\varepsilon\to0$ in the reduction above.
\end{lemma}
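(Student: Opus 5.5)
We prove the lemma by taking a nearly minimizing $\omega_t$-curve from $p$ to $q$ and dividing it into the portions lying away from $\widetilde V_{\varepsilon'}$ and the excursions into it, for a fixed $\varepsilon'$ with $0<\varepsilon'<\varepsilon$ to be chosen later. The portions away from $\widetilde V_{\varepsilon'}$ are handled by smooth convergence. The excursions are short in $d_{can}$ because $E_Y$ is a finite set.

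\textbf{Step 1.} Take a piecewise smooth curve $\gamma$ from $p$ to $q$ with $L_{\omega_t}(\gamma)\le d_t(p,q)+t^{-1}$. By \cref{thm:diameter-volume-estimates}, $L_{\omega_t}(\gamma)\le C$. The set $K:=X\setminus\widetilde V_{\varepsilon'}$ is a compact subset of $X\setminus E$, because $E=f^{-1}(E_Y)$. By \cref{lem:basic-flow-estimates}, $\omega_t\to\omega_{KE}$ smoothly on a neighborhood of $K$. Hence for $t\ge T(\varepsilon')$,
\[
\omega_{KE}\le(1+\Psi(t^{-1}\mid\varepsilon'))\,\omega_t \quad\text{on } K.
\]
So every subarc of $\gamma$ contained in $K$ has $\omega_{KE}$-length at most $(1+\Psi)$ times its $\omega_t$-length. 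The $d_{can}$-distance between the endpoints of such a subarc is at most its $\omega_{KE}$-length, since $d_{can}$ restricted to $f(K)$ is bounded by path lengths in $X\setminus E$.

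\textbf{Step 2.} Each maximal subarc of $\gamma$ inside $\widetilde V_{\varepsilon'}$ enters and exits through $\partial\widetilde V_{\varepsilon'}$. Its endpoints $a,b$ map into $\varepsilon'$-neighborhoods of points of $E_Y$. If both endpoints lie near the same point $y_i\in E_Y$, then $d_{can}(f(a),f(b))\le 2\varepsilon'$. The subarc can be replaced by this estimate at cost $2\varepsilon'$, and this cost is charged against the segment. If it lies near distinct points $y_i\ne y_j$, we gain nothing but also lose nothing: we bound $d_{can}(f(a),f(b))\le 2\varepsilon'+d_{can}(y_i,y_j)$. The subarc still has $\omega_t$-length at least $\delta:=\min_{i\ne j}d_t$-separation, which we control next.

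\textbf{Step 3 (main obstacle).} The issue is the number of excursions. With naive bookkeeping the total error is $2\varepsilon'\cdot\#\{\text{excursions}\}$, and this number is not bounded a priori. The resolution is to collapse repeated visits. Let $t_-$ be the first time $\gamma$ enters the component $\widetilde V_{\varepsilon'}^{(i)}$ over $y_i$, and $t_+$ the last time it leaves that component. Replace $\gamma|_{[t_-,t_+]}$ by the $d_{can}$-bound $2\varepsilon'$ and discard everything in between. After this procedure, each of the $N=\# E_Y$ components is visited at most once. The total error is then at most $2N\varepsilon'$, while the $\omega_t$-length used only decreases. The remaining subarcs either lie in $K$ (Step 1) or are single excursions between distinct singular points. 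For the latter, we would need that $d_t$-separation dominates $d_{can}$-separation up to $\Psi$. We avoid this by choosing $\varepsilon'$ so small that a path between distinct components must first traverse $K$ over a $d_{can}$-distance comparable to the separation. Such a path therefore exits into $K$, so it is not a single excursion, and we reduce to the previous cases.

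\textbf{Conclusion.} Summing the contributions,
\[
d_{can}(f(p),f(q))\le(1+\Psi(t^{-1}\mid\varepsilon'))(d_t(p,q)+t^{-1})+2N\varepsilon'.
\]
Since $d_t\le C$ and $N$ is fixed, choosing $\varepsilon'=\varepsilon$ yields the error term $\Psi(t^{-1}\mid\varepsilon)+\Psi(\varepsilon)$ with $\Psi(\varepsilon)=2N\varepsilon$. This is the stated inequality. The collapsing bookkeeping in Step 3 is the delicate point, and it relies essentially on $E_Y$ being finite.
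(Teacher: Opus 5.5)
Your proof is correct and is essentially the paper's argument. The paper collapses the finitely many components of $V_\varepsilon$ into the pseudo-distance $d_{can,\varepsilon}$ and asserts $d_{can}\le d_{can,\varepsilon}+\Psi(\varepsilon)$ as ``clear''; that inequality is justified precisely by your first-entry/last-exit surgery, at cost at most $N$ times the component diameter, and the comparison $f^*\omega_{KE}\le(1+\Psi(t^{-1}\mid\varepsilon))\omega_t$ off $\widetilde V_\varepsilon$ enters exactly as in your Step 1.

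Two small remarks. The ``distinct singular points'' excursions in Steps 2--3 cannot occur once $2\varepsilon'<\min_{i\ne j}d_{can}(y_i,y_j)$, since a connected arc in $\widetilde V_{\varepsilon'}$ then lies in a single component. Also, do not set $\varepsilon'=\varepsilon$ at the end: keep $\varepsilon'\le\varepsilon$ free, use $X\setminus\widetilde V_\varepsilon\subset X\setminus\widetilde V_{\varepsilon'}$, and let $t\to\infty$ and then $\varepsilon'\to0$; this yields the ``consequently'' clause directly, which you did not address.
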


\begin{proof}
Fix $\varepsilon>0$ small enough so that the connected components
$V_{\varepsilon,\nu}$ of $V_\varepsilon$ are mutually disjoint. Since $E_Y$ is a discrete
finite set, there are only finitely many such components, and
\[
\max_\nu\operatorname{diam}_{d_{can}}(V_{\varepsilon,\nu})
\le \Psi(\varepsilon).
\]
For notational simplicity, we introduce the auxiliary pseudo-distance $d_{can,\varepsilon}$ on
$Y\setminus V_\varepsilon$ by forcing each component $V_{\varepsilon,\nu}$ to
have zero diameter. Equivalently, for $a,b\in Y\setminus V_\varepsilon$,
$d_{can,\varepsilon}(a,b)$ is the infimum of the $d_{can}$-length of the part of
a piecewise smooth curve from $a$ to $b$ lying in $Y\setminus V_\varepsilon$.
Then we clearly have
\begin{equation}\label{eq:quotient-distance-comparison}
d_{can,\varepsilon}(a,b)\le d_{can}(a,b)\le d_{can,\varepsilon}(a,b)+\Psi(\varepsilon).
\end{equation}
Now fix $p,q\in X\setminus\widetilde V_\varepsilon$. Since
$X\setminus\widetilde V_\varepsilon\Subset X\setminus D$, the local smooth
convergence \cref{lem:basic-flow-estimates} gives, for $t$ sufficiently large,
\[
f^*\omega_{KE}\le (1+\Psi(t^{-1}\mid\varepsilon))\,\omega_t
\quad\text{on }X\setminus\widetilde V_\varepsilon.
\]
Let $\gamma$ be a piecewise smooth curve joining $p$ to $q$. In the
$d_{can,\varepsilon}$-length of $f\circ\gamma$, the portions lying inside
$V_\varepsilon$ are ignored. On the remaining portions, $\gamma$ lies in
$X\setminus\widetilde V_\varepsilon$, and hence the previous metric comparison gives
\[
L_{can,\varepsilon}(f\circ\gamma)
\le
(1+\Psi(t^{-1}\mid\varepsilon))L_t(\gamma).
\]
Taking the infimum over all such $\gamma$, we obtain
\[
d_{can,\varepsilon}(f(p),f(q))
\le
(1+\Psi(t^{-1}\mid\varepsilon))d_t(p,q).
\]
Since $\operatorname{diam}(X,\omega_t)\le C$, this implies
\[
d_{can,\varepsilon}(f(p),f(q))
\le
d_t(p,q)+\Psi(t^{-1}\mid\varepsilon).
\]
Combining this with \eqref{eq:quotient-distance-comparison} yields
\[
d_{can}(f(p),f(q))
\le
d_t(p,q)+\Psi(t^{-1}\mid\varepsilon)+\Psi(\varepsilon),
\]
as required.
\end{proof}

\begin{theorem}\label{thm:GH-convergence-canonical-model}
The metric spaces $(X,d_t)$ converge in the Gromov-Hausdorff topology to
$(Y,d_{can})$ as $t\to+\infty$.
\end{theorem}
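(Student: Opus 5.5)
The plan is to combine Gromov's precompactness theorem with the reduction recalled before \cref{lem:canonical-distance-upper}, namely the argument of \cite[Lemma 4.3]{LTZ26} (see also \cite[Section 9]{Sun26}).

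By \cref{thm:diameter-volume-estimates}, the spaces $(X,d_t)$ have uniformly bounded diameter. The volume non-collapsing estimate combines with the upper bound $\omega_t^2\le C\Omega$ of \cref{lem:basic-flow-estimates}. A packing argument then bounds the number of disjoint $r$-balls by $Cr^{-\alpha}$, uniformly in $t$, so the family is uniformly totally bounded. Gromov's precompactness theorem therefore shows that every sequence $t_j\to\infty$ has a subsequence along which $(X,d_{t_j})\to(Z,d_Z)$ in the Gromov--Hausdorff sense. It suffices to show that every such subsequential limit is isometric to $(Y,d_{can})$; full convergence then follows.

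To identify $Z$, I would build approximate maps between $(X,d_t)$ and $(Y,d_{can})$ using $f$. On $X\setminus\widetilde V_\varepsilon$ the metrics $\omega_t$ converge smoothly to $f^*\omega_{KE}$. This gives the reverse inequality $d_t(p,q)\le d_{can}(f(p),f(q))+\Psi(t^{-1}\mid\varepsilon)+\Psi(\varepsilon)$ for $p,q\in X\setminus\widetilde V_\varepsilon$, with one extra ingredient: a $d_{can}$-almost-minimizing curve in $Y$ can be modified to avoid $V_\varepsilon$ at cost $\Psi(\varepsilon)$, since $E_Y$ is finite and the orbifold metric is locally a cone over $S^3/\Gamma$. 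This reverse inequality needs the $d_t$-diameter of each component of $\widetilde V_\varepsilon$ to be $\Psi(\varepsilon)$ for large $t$, and that is exactly what volume non-collapsing together with $\omega_t^2\le C\Omega$ provides. Suppose a point $x\in\widetilde V_\varepsilon$ had $d_t(x,X\setminus\widetilde V_{2\varepsilon})\ge r$. Then $B_t(x,r)\subset\widetilde V_{2\varepsilon}$, so $cr^\alpha\le\int_{\widetilde V_{2\varepsilon}}\Omega\to0$ as $\varepsilon\to0$. Every point of $\widetilde V_\varepsilon$ is therefore $\Psi(\varepsilon)$-close to $X\setminus\widetilde V_{2\varepsilon}$. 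One then connects such nearby points through a thin annular region where the smooth convergence holds, using that the boundary of each component of $V_{2\varepsilon}$ has small $d_{can}$-diameter. Combined with \cref{lem:canonical-distance-upper}, this shows that $f$, restricted to $X\setminus\widetilde V_\varepsilon$ and sending $\widetilde V_\varepsilon$ to $E_Y$, is a $(\Psi(t^{-1}\mid\varepsilon)+\Psi(\varepsilon))$-Gromov--Hausdorff approximation. It is surjective up to $\Psi(\varepsilon)$ since $f$ is onto.

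Letting $t\to\infty$ and then $\varepsilon\to0$ gives $d_{GH}((X,d_t),(Y,d_{can}))\to0$. The main obstacle is the diameter control of $\widetilde V_\varepsilon$ in $d_t$. Volume non-collapsing only says these sets are thin, not that they have small diameter: a component could a priori be a long thin neck. I would try first to rule this out by the covering argument above applied to curves. Take a point of $\widetilde V_\varepsilon$ at maximal $d_t$-distance from $\partial\widetilde V_{2\varepsilon}$ and use the ball-inclusion estimate to bound that distance by $C(\int_{\widetilde V_{2\varepsilon}}\Omega)^{1/\alpha}$. This exploits the fact that in dimension two $E_Y$ consists of finitely many points.
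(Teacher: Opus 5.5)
Your proposal is correct and follows the paper's route. The paper delegates everything other than the one-sided comparison \eqref{eq:distance-comparison-needed} to the reduction of \cite[Lemma 4.3]{LTZ26}, and then closes with \cref{lem:canonical-distance-upper} exactly as you do. You effectively carry out that reduction by hand: precompactness, the reverse inequality via curves avoiding the finitely many orbifold points, and control of $\widetilde V_\varepsilon$ via volume non-collapsing and $\omega_t^2\le C\Omega$.

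The step you flag as the main obstacle is already settled by your own ball-inclusion bound. That bound shows, uniformly in $t$, that $X\setminus\widetilde V_{2\varepsilon}$ is $C(\int_{\widetilde V_{2\varepsilon}}\Omega)^{1/\alpha}$-dense in $(X,d_t)$. This density, together with the two-sided comparison on $X\setminus\widetilde V_{2\varepsilon}$, already yields a Gromov--Hausdorff approximation. So no bound on the $d_t$-diameter of the components is needed.
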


\begin{proof}
By the above reduction, it remains to verify
\eqref{eq:distance-comparison-needed}. This is exactly
\cref{lem:canonical-distance-upper}. Indeed, for fixed $\varepsilon>0$, the lemma
gives
\[
d_{can}(f(p),f(q))
\le
d_t(p,q)+\Psi(t^{-1}\mid\varepsilon)+\Psi(\varepsilon)
\]
for all $p,q\in X\setminus\widetilde V_\varepsilon$. Letting first $t\to\infty$ and
then $\varepsilon\to0$ gives the required distance comparison.
\end{proof}

\end{document}